\documentclass[3p,a4paper,times]{elsarticle}

\usepackage{amsmath}
\usepackage{amssymb}
\usepackage{amsfonts}
\usepackage{amsthm}
\usepackage{verbatim}
\usepackage{mathrsfs}
\usepackage{graphicx}
\usepackage{caption}
\usepackage{subcaption}
\usepackage{tabularx}
\usepackage{longtable}
\usepackage{tikz}
\usepackage{pgfplots}
\usepackage{xcolor}
\usepackage{pgfplots}
\usepackage{pgfplotstable}
\usepackage{enumitem}
\usepackage{layout}
\allowdisplaybreaks
\usepackage[toc]{appendix}

\usepackage[colorlinks=true, bookmarksopen,
pdfauthor={CST},
pdfcreator={pdftex},
pdfsubject={algorithms},
linkcolor={blue},
anchorcolor={black},
citecolor={firebrick},
filecolor={magenta},
menucolor={black},
pagecolor={red},
plainpages=false,pdfpagelabels,
urlcolor={db} ]{hyperref}

\usepackage[capitalise]{cleveref}
\usepackage[noadjust]{cite}
\usepackage{color}

\usepackage{bm}

\newtheorem{proposition}{Proposition}[section]
\newtheorem{theorem}{Theorem}[section]
\newtheorem{remark}{Remark}[section]
\newtheorem{example}{Example}[section]
\numberwithin{equation}{section}
\numberwithin{lemma}{section}

\numberwithin{equation}{section}

\newcommand{\N}{\mathbb{N}}    
\newcommand{\R}{\mathbb{R}}    

\newcommand{\bo}{\mathcal{O}}
\newcommand{\Qu}{\textsf{u}}
\newcommand{\Qa}{\textsf{a}}
\newcommand{\Qb}{\textsf{b}}
\newcommand{\Qc}{\textsf{c}}
\newcommand{\Qd}{\textsf{d}}
\newcommand{\Qf}{\textsf{f}}
\newcommand{\Qk}{\textsf{k}}
\newcommand{\Qw}{\textsf{w}}

\newcommand{\be}{ \begin{equation} }
	\newcommand{\ee}{ \end{equation} }

\newcommand{\ind}{\Lambda}

\newcommand{\nab}{\nabla}

\begin{document}
	
		\begin{frontmatter}
\title{Fourth-order compact finite difference methods for 2D and 3D nonlinear convection-diffusion-reaction equations}
		
\author[rvt1,rvt2]{Qiwei Feng\fnref{fn1}}
\ead{qiwei.feng@kfupm.edu.sa,qif21@pitt.edu,qfeng@ualberta.ca}
\address[rvt1]{Department of Mathematics, University of Pittsburgh, Pittsburgh, PA 15260, USA.}
\address[rvt2]{Mathematics Department, King Fahd University of Petroleum and Minerals, Dhahran 31261, Saudi Arabia.}

\fntext[fn1]{Qiwei Feng is partially supported by  the Mathematics Research Center, Department of Mathematics, University of Pittsburgh, Pittsburgh, PA, USA, and the Mathematics Department, King Fahd University of Petroleum and Minerals (KFUPM), Dhahran, Saudi Arabia.}

\makeatletter \@addtoreset{equation}{section} \makeatother

	\begin{abstract}	
		In this paper, we first consider linear 2D and 3D convection-diffusion-reaction (advection-diffusion-reaction) equations $-\nab\cdot (\kappa \nab u) +    {\bm v} \cdot \nab u  + \lambda u = \phi$ (time-independent/steady/stationary)  and  $u_t-\nab\cdot (\kappa \nab u) +    {\bm v} \cdot \nab u   + \lambda u = \phi$ (time-dependent/unsteady/nonstationary),  where  all $\kappa>0,{\bm v},\lambda, \phi$ are smooth variable functions. We derive fourth-order compact 9-point (2D) and 19-point (3D) finite difference methods (FDMs) to solve linear time-independent equations. As derivations of high-order compact FDMs are very complicated and involve cumbersome notation (especially in 3D), it is usually difficult for readers not specializing in high-order FDMs to follow derivations and replicate numerical results. In this paper, we observe interesting and novel expressions of stencils of high-order FDMs  which  introduce new restrictions of stencils to help construct compact fourth-order FDMs (2D and 3D) with easy, explicit, and short  expressions. 
		These simple stencils make the analysis of the truncation error easy for readers  to understand and facilitate implementing proposed FDMs directly. For linear unsteady equations, we apply  Crank-Nicolson (CN), BDF3, BDF4 methods with above compact FDMs to compute numerical solutions.  Finally, we discuss nonlinear convection-diffusion-reaction equations in 2D and 3D, i.e., $-\nab\cdot (\kappa(u) \nab u) +    {\bm v}(u) \cdot \nab u   + \lambda(u) u = \phi$ and $u_t-\nab\cdot (\kappa(u) \nab u) +    {\bm v}(u) \cdot \nab u   + \lambda(u) u = \phi$, where each function of $\kappa(u)>0, {\bm v}(u),\lambda(u)$ depends on the solution $u$. We linearize nonlinear equations by the fixed point method (iterative method) and use above simple fourth-order compact FDMs to solve linearized equations (unsteady equations also utilize CN, BDF3, and BDF4 methods).  Each of proposed FDMs in 2D and 3D for linear, nonlinear, steady, and unsteady equations satisfies the discrete maximum principle and  forms an M-matrix for the sufficiently small $h$, if the function $\lambda$ is nonnegative. The numerical results validate the accuracy and convergence rate of above FDMs for  linear and nonlinear,  stationary and  nonstationary, convection-diffusion-reaction equations in 2D and 3D.
\end{abstract}	
	
\begin{keyword}
Fourth-order accuracy, compact 9-point 2D FDM, compact 19-point 3D FDM,  simple stencils, nonlinear convection-diffusion-reaction equations, steady and unsteady equations
	\MSC[2020]{65M06, 65N06, 35G30, 35G31}
\end{keyword}

\end{frontmatter}

	\section{Introduction}
	Nonlinear convection-diffusion-reaction equations $-\nab\cdot (\kappa(u) \nab u) +  {\bm v(u)} \cdot \nab u +   \lambda(u) u = \phi$ (steady) and $u_t-\nab\cdot (\kappa(u) \nab u) +  {\bm v(u)} \cdot \nab u +   \lambda(u) u = \phi$ (unsteady) with nonlinear functions of diffusion $\kappa(u)$, convection ${\bm v(u)}$, and reaction $\lambda(u)$ appear in many practical problems, such as fluid flow, heat transport, and various biological processes. In \citep{FengTrenchea2026}, we derived fourth-order  compact 9-point   finite difference methods (FDMs) for 2D nonlinear convection-diffusion equations   $-\nab\cdot (\kappa \nab u) +  {\bm v(u)} \cdot \nab u  = \phi$ (time-independent) and $u_t-\nab\cdot (\kappa \nab u) +  {\bm v(u)} \cdot \nab u  = \phi$ (time-dependent) with variable functions $\kappa(x,y)$, $\kappa(x,y,t)$ and the nonlinear convection function ${\bm v(u)}$. As we did not consider the reaction term $\lambda(u)$ in \citep{FengTrenchea2026},  we could not use the FDM of the stationary equation to solve the nonstationary equation. I.e., we did not provide the FDM with the explicit expression to solve the 2D time-dependent equation. In this paper, we discuss the extended problem of \citep{FengTrenchea2026}, and propose fourth-order compact 9-point (2D) and 19-point (3D) FDMs with simple stencils. Precisely, we consider linear and nonlinear, steady and unsteady,  convection-diffusion-reaction equations in 2D and 3D. As we build fourth-order FDMs in this paper, we assume that the exact solution $u$ is smooth in the computational spatial and temporal  domains.
	
	A brief review of numerical methods of discontinuous Galerkin (DG) methods, finite element methods (FEMs),  finite volume methods (FVMs), finite difference methods (FDMs), and other numerical methods for nonlinear convection-diffusion and convection-diffusion-reaction equations is provided below (see references therein for further discussions). For the nonlinear  steady   convection-diffusion-reaction equation,  the stabilized Galerkin method for
	$-\epsilon \Delta u	+ {\bm v} \cdot \nab u +\lambda(u) = f$ was derived by Burman et al. in \citep{Burman2002} (the linear case was discussed by Burman et al. in \citep{BurmanErn2005}); up to sixth-order FDMs were proposed  to solve $	-\kappa(u)\Delta u  +   {\bm v(u)} \cdot \nab u +\lambda(u) u = f$ by  Clain et al.  in \citep{Clain2024}; and the fourth-order DG method (fifth-order accuracy can be achieved by postprocessing) for  $-\nab\cdot (\kappa \nab u) +   {\bm v(u)} \cdot \nab u  = f$ was constructed by Nguyen et al.  in \citep{Nguyen2009}.
	For the nonlinear time-dependent  equation, the error analysis  for $u_t-\epsilon \Delta u	+ {\bm v}(u) \cdot \nab u  = f$ was discussed in
	\citep{Dolej2007,Dolej2005,DolejVlas2008,Feistauer2011} (DG method) and presented  by Feistauer et al. in \citep{Feistauer1997} (combined FV-FEM); Hou	et al. in \citep{Hou2024} built a local DG method with the implicit-explicit
	time marching to solve	$u_t-\epsilon \Delta u	+  \nab \cdot ({\bm v}  \sigma(u) )+\lambda(u)  = f$; Huang et al. in \citep {Huang2026} constructed the fourth-order FDM for $u_t- \Delta u	+ {\bm v} \cdot \nab u +r u = f(u(\theta(t)),u(t))$ with the nonlinear vanishing delay term $f(u(\theta(t)),u(t))$; the third-order DG method (fourth-order accuracy can be attained by postprocessing) of  $u_t-\nab\cdot (\kappa \nab u) + {\bm v}(u) \cdot \nab u  = f$ was designed by Nguyen et al. in \citep{Nguyen2009}; the convergence analysis of the DG method for  $u_t-\nab\cdot (K(u) \nab u) 	+ {\bm v}(u) \cdot \nab u  = 0$ with the positive definite matrix $K(u)$ was deduced by Xu et al. in \citep{XuShu2007}  and Yan  in \citep{Yan2013}. Eymard et al. in \citep{Eymard2010} and Kurganov et al. in \citep{Kurganov2000} numerically solved more general cases: $(\alpha(u))_t-\nab\cdot (K \nab u) 	+  \nab \cdot ({\bm v}  u)+\lambda(u)  = f$ and $u_t-(P(u,u_x,u_y))_x-(Q(u,u_x,u_y))_y+ {\bm v}(u) \cdot \nab u  = 0$, respectively. The stabilized FEM for the system  of linear stationary  convection-diffusion-reaction equations was provided by Codina et al. in \citep{Codina2000}. 
	For the system of nonlinear  time-independent   equations,  the discontinuity-capturing FEM was developed by  Tezduyar et al. in \citep{Tezduyar1986}.  
	For the system of  nonlinear  unsteady  
	convection-diffusion-reaction equations, the error estimate of the stabilized higher-order FEM was derived by Bause et al.  in \citep{Bause2012}; the extended Runge-Kutta DG method was introduced by Cockburn  in \citep{Cockburn1998}; Knoll et al. in \citep{Knoll1995} designed the Newton-Krylov method;  the  nonlinear stability analysis of the high-order  DG method was established by  Michoski et al. in \citep{Michoski2017}; and Sheshachala et al. in \citep{Sheshachala2018} developed an extended stabilized FEM.
	
	In this paper, we discuss  linear and nonlinear, time-independent and time-dependent, convection-diffusion-reaction equations  equations in 2D and 3D. Our paper is organized as follows:
	In \cref{subse:liner:elli:2D}, we discuss the 2D linear time-independent equation, and propose the fourth-order compact 9-point FDM with the simple stencil in \cref{thm:FDM:2D} (\cref{thm:FDM:2D} is our first main result). In \cref{subse:liner:para:2D}, we consider the 2D linear time-dependent equation and use CN, BDF3, BDF4 methods with \cref{thm:FDM:2D} to compute the solution. By analyzing the truncation error, the order of accuracy is at most three. So we also derive the fourth-order compact 9-point FDM  in \cref{thm:FDM:parabo:2D} to solve the linear  nonstationary equation. In 	\cref{subse:non:liner:elli:2D,subse:non:liner:para:2D}, we use the fixed method (iteration method) to transfer 2D nonlinear steady and unsteady equations to  linear equations which are solved by FDMs in \cref{thm:FDM:2D} and \cref{thm:FDM:parabo:2D}. In \cref{subse:liner:elli:3D,subse:liner:para:3D,subse:non:liner:elli:3D,subse:non:liner:para:3D}, we propose fourth-order compact 19-point FDMs in \cref{thm:FDM:3D} (\cref{thm:FDM:3D} is our second main result)	and \cref{thm:FDM:parabo:3D} to solve linear and nonlinear, stationary and nonstationary equations in 3D. In \cref{sec:Numeri}, we provide eight examples for above eight PDEs (linear and nonlinear, steady and unsteady, 2D and 3D) to verify performances of FDMs in \cref{thm:FDM:2D,thm:FDM:parabo:2D,thm:FDM:3D,thm:FDM:parabo:3D}. The numerical results revel that our two main results: FDMs in \cref{thm:FDM:2D} (the main result in 2D) and \cref{thm:FDM:3D} (the main result in 3D) with simple and explicit expressions can achieve third-order to fourth-order accuracy of errors in the $l_{\infty}$ norm for all eight PDEs. Furthermore,  FDMs in \cref{thm:FDM:parabo:2D} (2D) and \cref{thm:FDM:parabo:3D} (3D) with the BDF4 method achieve the fourth-order accuracy for time-dependent equations. Although \cref{thm:FDM:2D,thm:FDM:3D} are one order lower than \cref{thm:FDM:parabo:2D,thm:FDM:parabo:3D} for unsteady equations, respectively, the numerical results indicate that errors from \cref{thm:FDM:2D,thm:FDM:3D} are only slightly larger than those from \cref{thm:FDM:parabo:2D,thm:FDM:parabo:3D}, respectively. As stencils of \cref{thm:FDM:2D,thm:FDM:3D} are presented in easy expressions, we can conclude that we derive compact 9-point (2D) and 19-point (3D) FDMs with simple stencils and the high accuracy for   linear and nonlinear, stationary and nonstationary, convection-diffusion-reaction equations.  In \cref{sec:contribu}, we summarize the main contribution.

	\section{Fourth-order  compact FDMs in 2D and 3D}
	In this section,  we derive fourth-order  compact finite difference methods (FDMs) for linear and nonlinear,  time-independent and time-dependent,  convection-diffusion-reaction equations in 2D and 3D in following Sections \ref{subse:liner:elli:2D}--\ref{subse:non:liner:para:3D}.		
	
	\subsection{Fourth-order  compact 9-point FDM for the linear time-independent equation in 2D}\label{subse:liner:elli:2D}
	In this subsection, we discuss the following 2D linear steady  equation with the Dirichlet boundary condition:
	\be\label{Linear:Elliptic:2D}
	\begin{cases}
		-\nab\cdot (\kappa \nab u) +   \alpha u_x +   \beta u_y +   \lambda u = \phi, \qquad \qquad  &  (x,y)\in \Omega,\\
		u =g,  & (x,y)\in \partial \Omega,
	\end{cases}
	\ee
	where $\kappa=\kappa(x,y)>0$, $\alpha= \alpha(x,y)$, $\beta= \beta(x,y)$, $\lambda= \lambda(x,y)$, and $\phi=\phi(x,y)$ are smooth variable functions in the square spatial domain
	$\Omega:=(l_1,l_2)^2$. 
	According to the direct calculation of \eqref{Linear:Elliptic:2D}, we have	
	\[
	-\kappa \Delta u  + (\alpha-\kappa_x) u_x +   (\beta-\kappa_y) u_y +   \lambda u = \phi,
	\]
	and
	\[
	\Delta u  + \frac{\kappa_x-\alpha}{\kappa} u_x +   \frac{\kappa_y-\beta}{\kappa} u_y -   \frac{\lambda}{\kappa} u = -\frac{\phi }{\kappa}.
	\]
	Let
	\be\label{notation:abdf}
	a:=\frac{\kappa_x-\alpha}{\kappa}, \qquad b:=\frac{\kappa_y-\beta}{\kappa}, \qquad d:=-   \frac{\lambda}{\kappa}, \qquad f:=-\frac{\phi }{\kappa}.
	\ee
	Then
	\be\label{2D:linear:simple:eq}
	\Delta u  + a u_x +  bu_y +du =f.
	\ee
	We do not use $c:=-\frac{\lambda}{\kappa}$ in \eqref{notation:abdf} to reuse 2D notations for 3D convection-diffusion-reaction equations in \cref{subse:liner:elli:3D,subse:liner:para:3D,subse:non:liner:elli:3D,subse:non:liner:para:3D}, as we define $c:=\frac{\kappa_z-\gamma}{\kappa}$ in \eqref{abcd:3D} for 3D equations.
	In this paper, we use the uniform Cartesian grid to discretize the spatial  domain $\Omega=(l_1,l_2)^2$ as follows:
	\be \label{xiyj:2D:space}
	x_i:=l_1+i h, \qquad y_j:=l_1+j h, \qquad i,j=0,\ldots,N_1,  \qquad \text{and} \qquad h:=(l_2-l_1)/N_1,
	\ee
	where $N_1\in \N$. We define that $u_h$  is  computed by the proposed fourth-order FDM with the uniform Cartesian mesh size $h$, $(u_h)_{i,j}$ and $u_{i,j}$ are the values of the numerical solution $u_h$ and the exact solution $u$ at the grid point $(x_i,y_j)$ in \eqref{xiyj:2D:space}, respectively.
	
	{\bf{ The algorithm to derive the fourth-order FDM:}}	In \citep{FengTrenchea2026}, we discuss the convection-diffusion equation as follows:
	\be\label{conve:diff:eq}
	\Delta u  + a u_x +  bu_y  =f.
	\ee
	In this paper, we consider the convection-diffusion-reaction equation $\Delta u  + a u_x +  bu_y +du =f$ in \eqref{2D:linear:simple:eq}.
	Similarly applying \citep[eqs. (3)--(36)]{FengTrenchea2026}, the exact solution $u$ in \eqref{Linear:Elliptic:2D} and \eqref{2D:linear:simple:eq} satisfies
	\be \label{u:GH}
	u(x+x_i,y+y_j)=\sum_{(m,n)\in \ind^1_{5}}
	u^{(m,n)}G_{5,m,n}(x,y)+\sum_{(m,n)\in \ind_{3}}	f^{(m,n)} H_{5,m,n}(x,y)+\bo(h^{6}),
	\ee
	where  $x,y\in [-h,h]$,
	\[
	u^{(m,n)}:=\frac{\partial^{m+n}u(x,y)}{\partial x^m \partial y^n}\Big|_{(x,y)=(x_i,y_j)},\qquad f^{(m,n)}:=\frac{\partial^{m+n}f(x,y)}{\partial x^m \partial y^n}\Big|_{(x,y)=(x_i,y_j)},
	\]
	\be\label{ind:2D}
	\ind_{5}^{ 1}:=\{  (m,n)\in \N^2 \; : \; m=0,1, m+n\le 5\}, \qquad \ind_{3}:=\{ (m,n)\in \N^2 \; : \;  m+n\le 3 \}, 
	\ee
	$G_{5,m,n}(x,y)$ and $H_{5,m,n}(x,y)$ are polynomials of variables $x,y$ whose coefficients can be uniquely determined by  $\{\tfrac{\partial ^{m+n} \varrho(x,y)}{\partial x^m \partial y^n}|_{(x,y)=(x_i,y_j)} : (m,n)\in 	\ind_{3}\}$ with $\varrho=a,b,d$ in \eqref{notation:abdf}--\eqref{2D:linear:simple:eq}, and are independent of  $\{f^{(m,n)} : (m,n)\in 	\ind_{3}\}$. Precisely, using \citep[eqs. (3)--(36)]{FengTrenchea2026} with replacing \eqref{conve:diff:eq} by \eqref{2D:linear:simple:eq}, then $G_{5,m,n}(x,y)$ and $H_{5,m,n}(x,y)$ can be obtained uniquely by the symbolic computation. Although we do not provide explicit formulas of  $G_{5,m,n}(x,y)$ and $H_{5,m,n}(x,y)$ in \eqref{u:GH}, the first main result \cref{thm:FDM:2D} in 2D in this paper (the second main result is  \cref{thm:FDM:3D} in 3D) presents the explicit formula  of the fourth-order compact 9-point FDM (see \eqref{C:Left:2D}--\eqref{F:Right:2D}) without requiring any information from  $G_{5,m,n}(x,y)$ and $H_{5,m,n}(x,y)$. Readers can just consider \eqref{u:GH} as a tool to understand the  derivation of the high-order FDM with available but complicated $G_{5,m,n}(x,y)$ and $H_{5,m,n}(x,y)$, but the first main result \cref{thm:FDM:2D} is simple and successfully eliminates all tedious terms  in $G_{5,m,n}(x,y)$ and $H_{5,m,n}(x,y)$ by employing new observations \eqref{Crl}--\eqref{property:3:2D}. 
	
	Now, we utilize \eqref{u:GH} to derive the fourth-order compact 9-point FDM as follows (note that the following \eqref{Lh:u:1}--\eqref{EQ:2:explicit} are similar to \citep[eqs. (39)--(46)]{FengTrenchea2026}, we repeat them here to introduce novel observations \eqref{property:1:2D}--\eqref{property:3:2D}):	Let
	\be\label{Lh:u:1}
	\begin{split}
		\mathcal{L}_h u_{i,j} :=\frac{1}{h^2}\sum_{r,\ell=-1}^1 C_{r,\ell}\Big|_{(x,y)=(x_i,y_j)} u_{i+r,j+\ell}=\frac{1}{h^2}\sum_{r,\ell=-1}^1 C_{r,\ell}\Big|_{(x,y)=(x_i,y_j)} u(rh+x_i,\ell h+y_j).
	\end{split}
	\ee
	Then \eqref{u:GH} leads to
	\begin{align}\label{Lh:u}
		h^{-2}	\mathcal{L}_h u_{i,j}   = &	 h^{-2}  \sum_{r,\ell=-1}^{1} C_{r,\ell} \Big|_{(x,y)=(x_i,y_j)}	\sum_{(m,n)\in \ind^1_{5}}
		u^{(m,n)}G_{5,m,n}(rh,\ell h) \notag \\
		&+h^{-2} \sum_{r,\ell=-1}^{1} C_{r,\ell} \Big|_{(x,y)=(x_i,y_j)} \sum_{(m,n)\in \ind_{3}}	f^{(m,n)} H_{5,m,n}(rh,\ell h)+\bo(h^{4}), \notag  \\
		=& h^{-2} \sum_{(m,n)\in \ind_{5}^{1}} u^{(m,n)} I_{5,m,n} +h^{-2} \sum_{(m,n) \in \ind_{	 3}} f^{(m,n)} J_{3,m,n}   +\bo(h^{4}),
	\end{align}
	where
	\be\label{Imn}
	\begin{split}
		I_{5,m,n}:=\sum_{r,\ell=-1}^{1} C_{r,\ell} \Big|_{(x,y)=(x_i,y_j)} G_{5,m,n}  (rh, \ell h),\qquad J_{3,m,n}:= \sum_{r,\ell=-1}^{1} C_{r,\ell}\Big|_{(x,y)=(x_i,y_j)} H_{5,m,n}  (rh, \ell h).
	\end{split}
	\ee
	Let
	\be\label{FDMs:2D:u}
	\mathcal{L}_h (u_h)_{i,j} :=\frac{1}{h^2}\sum_{r,\ell=-1}^1 C_{r,\ell}\Big|_{(x,y)=(x_i,y_j)} (u_h)_{i+r,j+\ell}=F_{i,j}\Big|_{(x,y)=(x_i,y_j)},
	\ee
	where
	\be \label{Fij}
	F_{i,j}:=\text{the terms of } \Big(h^{-2} \sum_{(m,n) \in \ind_{	 3}} f^{(m,n)} J_{3,m,n}\Big) \text{ with degree}   \le 3 \text{ in } h.
	\ee
	Then \eqref{Lh:u}, \eqref{FDMs:2D:u}, and  \eqref{Fij} result in
	\be\label{L:h:u:uh} 
	h^{-2}	\mathcal{L}_h (u-u_h)_{i,j}=	 h^{-2} \sum_{(m,n)\in \ind_{5}^{1}} u^{(m,n)} I_{5,m,n} +\bo(h^{4}).
	\ee
	
	\textbf{A novel and  interesting observation of $C_{r,\ell}$ to achieve the fourth-order accuracy of \eqref{L:h:u:uh}:}  In \citep{FHM2024,FengTrenchea2026},  we set the left-hand side $C_{r,\ell}$ of the FDM \eqref{FDMs:2D:u} as the polynomial of $h$, i.e.,
	\be\label{Crl}
	C_{r,\ell}:=\sum_{p=0}^5 c_{r,\ell,p} h^p, \quad \text{with }  c_{r,\ell,p} \text{ depending on } a,b,d \text{ in } \eqref{2D:linear:simple:eq}, \quad \text{and} \quad c_{r,\ell,p}|_{(x,y)=(x_i,y_j)} \in \R.
	\ee
	From \eqref{Imn}, \eqref{L:h:u:uh}, and \eqref{Crl}, to derive the fourth-order FDM, we need to solve
	\be \label{EQ:1:explicit}
	I_{5,m,n}=\sum_{r,\ell=-1}^{1} G_{5,m,n}  (rh, \ell h) \sum_{p=0}^5 c_{r,\ell,p} \big|_{(x,y)=(x_i,y_j)} h^p =\bo(h^{6}) \quad \mbox{for all}  \quad (m,n)\in \ind_{5}^1, 
	\ee
	with
	\be \label{EQ:2:explicit}
	C_{0,0}|_{(x,y,h)=(x_i,y_j,0)}=c_{0,0,0}|_{(x,y)=(x_i,y_j)}\ne 0,  \qquad  F_{i,j}|_{(x,y,h)=(x_i,y_j,0)}=f(x_i,y_j).
	\ee
	While the solution $c_{r,\ell,p}$ in \eqref{EQ:1:explicit} is not unique and depends on  $a,b,d$, and it is very hard to obtain the $c_{r,\ell,p}$ with the short expression, especially for the equation \eqref{2D:linear:simple:eq} with variable functions $a,b,d$. Furthermore, this task is much harder for the convection-diffusion-reaction equation \eqref{simpli:PDE:3D} in 3D in \cref{subse:liner:elli:3D}.
	
	The symbolic calculation of \eqref{EQ:1:explicit} reveals following novel and  interesting observations \eqref{property:1:2D}--\eqref{property:3:2D}: each $c_{k,\ell,p}$ in  each 	$C_{k,\ell}$ always satisfies (similar observations for $	-\nab\cdot (\kappa \nab u)=\phi$ were proposed in \citep{FHMS2026})
	\be\label{property:1:2D}
	c_{r,\ell,p}=\sum_{\varsigma} \sigma_{r,\ell,p,\varsigma} \prod_{\mu_{\varsigma,1},\nu_{\varsigma,1},\rho_{\varsigma,1}}\left(\frac{\partial^{\mu_{\varsigma,1}+\nu_{\varsigma,1}} a }{\partial x^{\mu_{\varsigma,1}} \partial y^{\nu_{\varsigma,1}} }\right)^{\rho_{\varsigma,1}} \prod_{\mu_{\varsigma,2},\nu_{\varsigma,2},\rho_{\varsigma,2}}\left(\frac{\partial^{\mu_{\varsigma,2}+\nu_{\varsigma,2}} b }{\partial x^{\mu_{\varsigma,2}} \partial y^{\nu_{\varsigma,2}} }\right)^{\rho_{\varsigma,2}}  \prod_{\mu_{\varsigma,3},\nu_{\varsigma,3},\rho_{\varsigma,3}}\left(\frac{\partial^{\mu_{\varsigma,3}+\nu_{\varsigma,3}} d }{\partial x^{\mu_{\varsigma,3}} \partial y^{\nu_{\varsigma,3}} }\right)^{\rho_{\varsigma,3}},
	\ee
	where 
	\be\label{property:2:2D}
	\text{each }  \sigma_{r,\ell,p,\varsigma} \text{ is independent of } a,b,d,f,
	\ee
	and
	\be\label{property:3:2D}
	(\mu_{\varsigma,1}+\nu_{\varsigma,1}+1)\rho_{\varsigma,1}+	(\mu_{\varsigma,2}+\nu_{\varsigma,2}+1)\rho_{\varsigma,2}+	(\mu_{\varsigma,3}+\nu_{\varsigma,3}+2)\rho_{\varsigma,3}=p, \quad \text{for each} \quad  \varsigma.
	\ee
	As \eqref{property:1:2D}--\eqref{property:3:2D}  are key contributions in this paper,	we provide following \eqref{c004}--\eqref{c004:detail} as an example to help readers understand: In \eqref{C:Left:2D} in \cref{thm:FDM:2D}, the coefficient of $h^4$ in $C_{0,0}$ is $\tfrac{1}{12}[ad_x+\Delta d]h^4$. So
	\be\label{c004}
	c_{0,0,4}=\tfrac{1}{12}ad_x+\tfrac{1}{12} d_{xx}+\tfrac{1}{12} d_{yy}.
	\ee
	Comparing with \eqref{property:1:2D}, we have
	\be\label{c004:detail}
	\begin{split}
		& (r,\ell,p)=(0,0,4),\qquad \varsigma=1,2,3;\\
		& (\mu_{1,1},\nu_{1,1},\rho_{1,1})=(0,0,1),\quad \rho_{1,2}=0,\quad (\mu_{1,3},\nu_{1,3},\rho_{1,3})=(1,0,1), \quad \sigma_{0,0,4,1}=\tfrac{1}{12}\quad \text{for }\varsigma=1;\\
		& \rho_{2,1}=0,\quad \rho_{2,2}=0,\quad (\mu_{2,3},\nu_{2,3},\rho_{2,3})=(2,0,1), \quad \sigma_{0,0,4,2}=\tfrac{1}{12}\quad \text{for }\varsigma=2;\\
		& \rho_{3,1}=0,\quad \rho_{3,2}=0,\quad (\mu_{3,3},\nu_{3,3},\rho_{3,3})=(0,2,1), \quad \sigma_{0,0,4,3}=\tfrac{1}{12}\quad \text{for }\varsigma=3.
	\end{split}
	\ee
	
	\textbf{The contribution of this observation:}	
	Different from computing $c_{r,\ell,p}$ directly in \citep{FHM2024,FengTrenchea2026}, we choose to calculate $	\sigma_{r,\ell,p,\varsigma}$ in this paper. Using restrictions \eqref{property:2:2D} and \eqref{property:3:2D} for \eqref{property:1:2D}, the computed $C_{r,\ell}$ is very simple and short (see the following expression \eqref{C:Left:2D} in \cref{thm:FDM:2D}). Furthermore, as we consider variable functions $a,b,d,f$ in \eqref{2D:linear:simple:eq},  derivations from \eqref{u:GH}--\eqref{EQ:1:explicit} to construct $C_{r,\ell}$ are not easy for readers to understand. With the aid of the short and easy expression of $C_{r,\ell}$ in this paper, 
	the verification of the truncation error is very easy (see the proof of the following \cref{thm:FDM:2D}). Furthermore, although we consider the more complicated equation than \citep{FengTrenchea2026}, the expression of $F_{i,j}$ in the following \eqref{F:Right:2D} is simpler and shorter than $F_{i,j}$ in \citep[eq. (51)]{FengTrenchea2026}.
	
	Now we propose the fourth-order compact 9-point FDM for  convection-diffusion-reaction equations \eqref{Linear:Elliptic:2D} and \eqref{2D:linear:simple:eq} in the following \cref{thm:FDM:2D} which is obtained by symbolically solving \eqref{EQ:1:explicit}--\eqref{EQ:2:explicit} under new restrictions of $c_{r,\ell,p}$ \eqref{property:1:2D}--\eqref{property:3:2D}. 
	\begin{theorem}\label{thm:FDM:2D}
		Let $\kappa>0,u,\alpha,\beta,\lambda, \phi$ be smooth in $\overline{\Omega}$ in \eqref{Linear:Elliptic:2D}, and define
		\begin{align}\label{C:Left:2D}
			&C_{-1,-1}:=\tfrac{1}{6}-\tfrac{a+b}{12}h, \notag \\ 
			&C_{-1,0}:=\tfrac{2}{3}-\tfrac{a}{3}h+\tfrac{1}{12}[a^2+ab+d+2a_x+a_y+b_x]h^2 \notag \\
			&\qquad \quad -\tfrac{1}{24}[a(a_x+d)+ba_y+2d_x+\Delta a]h^3, \notag \\
			&C_{-1,1}:=\tfrac{1}{6}-\tfrac{1}{12}[a-b]h-\tfrac{1}{12}[ab+a_y+b_x]h^2,  \notag \\ 
			&C_{0,-1}:=\tfrac{2}{3}-\tfrac{b}{3}h+\tfrac{1}{12}[b^2+ab+a_y+b_x+2b_y+d]h^2 \notag\\
			&\qquad \quad-\tfrac{1}{24}[ab_x+b(b_y+d)+2d_y+\Delta b]h^3+\tfrac{bd_y}{12}h^4, \notag\\
			&C_{0,0}:=\tfrac{-10}{3}-\tfrac{1}{6} [ a^2+ab+b^2-4d+2a_x+a_y+b_x+2b_y ]h^2 \notag \\
			&\qquad \quad+\tfrac{1}{12}[ad_x+\Delta d]h^4,\\
			&C_{0,1}:=\tfrac{2}{3}+\tfrac{b}{3}h+\tfrac{1}{12}[ab+b^2+d+a_y+b_x+2b_y]h^2 \notag \\
			&\qquad \quad+\tfrac{1}{24}[ ab_x+b(b_y+d)+2d_y+\Delta b ]h^3, \notag \\
			&C_{1,-1}:=\tfrac{1}{6}+\tfrac{1}{12}[a-b]h-\tfrac{1}{12}[ab+a_y+b_x]h^2-\tfrac{bd_y}{12}h^4, \notag \\
			&C_{1,0}:= \tfrac{2}{3}+\tfrac{a}{3}h+\tfrac{1}{12}[a^2+ab+d+2a_x+a_y+b_x]h^2 \notag \\
			&\qquad \quad+\tfrac{1}{24}[a(a_x+d)+ba_y+2d_x+\Delta a]h^3+\tfrac{bd_y}{12}h^4, \notag \\
			&C_{1,1}:=\tfrac{1}{6} +\tfrac{1}{12}[a+b]h, \notag
		\end{align}
		\be\label{F:Right:2D}
		F_{i,j}:=f+\tfrac{1}{12}[ af_x+bf_y+\Delta f  ]h^2,
		\ee
		and $a,b,d,f$ are defined in \eqref{notation:abdf}. Then
		\be\label{truncation:error:h:4}
		\mathcal{L}_h (u_h)_{i,j}-\mathcal{L}_h (u)_{i,j}=\bo(h^4), 
		\ee	
		where $\mathcal{L}_h u_{i,j}$  and $\mathcal{L}_h (u_h)_{i,j}$ are defined in \eqref{Lh:u:1} and \eqref{FDMs:2D:u}, respectively. I.e., $\mathcal{L}_h (u_h)_{i,j}$ in \eqref{FDMs:2D:u} with $C_{r,\ell}$ in \eqref{C:Left:2D} and $F_{i,j}$ in \eqref{F:Right:2D} achieves a consistency order four for  convection-diffusion-reaction equations \eqref{Linear:Elliptic:2D} and \eqref{2D:linear:simple:eq} at $(x_i,y_j)$ in \eqref{xiyj:2D:space}.
	\end{theorem}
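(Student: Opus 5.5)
Since $\mathcal{L}_h (u_h)_{i,j}=F_{i,j}$ by \eqref{FDMs:2D:u}, the claim \eqref{truncation:error:h:4} is equivalent to showing $\mathcal{L}_h u_{i,j}-F_{i,j}=\bo(h^4)$ for the exact smooth solution $u$ of \eqref{2D:linear:simple:eq}. I would prove this by direct Taylor expansion rather than through the polynomials $G_{5,m,n},H_{5,m,n}$ of \eqref{u:GH}. The argument is kept self-contained: the explicit stencil \eqref{C:Left:2D} is given, so only its consistency needs checking.

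\textbf{Step 1 (expansion).} Expand $u(x_i+rh,y_j+\ell h)=\sum_{m+n\le 5} u^{(m,n)} r^m\ell^n h^{m+n}/(m!n!)+\bo(h^6)$. Substitute this into $h^{-2}\sum_{r,\ell}C_{r,\ell}u_{i+r,j+\ell}$, with the $C_{r,\ell}$ frozen at $(x_i,y_j)$. Collecting powers of $h$ gives
\[
\mathcal{L}_h u_{i,j}=\sum_{q=-2}^{3} h^q\, T_q+\bo(h^4),
\]
where each $T_q$ is a linear combination of the derivatives $u^{(m,n)}$, $m+n\le q+2$, with coefficients polynomial in $a,b,d$ and their derivatives. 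The structure \eqref{property:1:2D}--\eqref{property:3:2D} (each $c_{r,\ell,p}$ is homogeneous of weight $p$) confirms that every $T_q$ is a weight-homogeneous expression, which makes the bookkeeping manageable. Exploiting the symmetries of \eqref{C:Left:2D}, I would verify first that $T_{-2}=T_{-1}=0$. The $h^0$ parts give the classical compact stencil $\frac16,\frac23,-\frac{10}3$, so $T_{-2}=0$; the $h^1$ parts are antisymmetric, so $T_{-1}=0$.

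\textbf{Step 2 (matching with $F_{i,j}$).} Compute the right-hand side. Differentiate \eqref{2D:linear:simple:eq} to express $f_x,f_y,\Delta f$ through $u$-derivatives. Then $F_{i,j}=f+\frac{h^2}{12}(af_x+bf_y+\Delta f)$ is itself an expansion in $u^{(m,n)}$ of orders $0$ through $4$, with no odd powers of $h$. The needed identities are therefore
\[
T_0=\Delta u+au_x+bu_y+du,\qquad T_1=0,\qquad T_2=\tfrac1{12}\bigl(af_x+bf_y+\Delta f\bigr),\qquad T_3=0,
\]
each holding after substituting $f$ and its derivatives. The $T_0$ identity is the standard second-order consistency. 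The $T_1$ identity follows from the $h^1$ and $h^2$ coefficients of $C_{r,\ell}$ combined with odd moments.

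\textbf{Step 3 (the main obstacle: $T_2$ and $T_3$).} These involve $u$-derivatives up to order five and the $h^3,h^4$ coefficients, including the asymmetric $\pm\frac{bd_y}{12}h^4$ terms. A raw comparison fails here, because pure fourth and fifth derivatives such as $u_{xxxx}$ appear on the left but not obviously on the right. To handle them, I would reduce both sides modulo the PDE. Use \eqref{2D:linear:simple:eq} and its derivatives up to order three to eliminate every $u^{(m,n)}$ with $m\ge 2$, leaving only the independent set $\{u^{(m,n)}:m=0,1\}$ of \eqref{ind:2D}. This is exactly the reduction underlying \eqref{u:GH} and \eqref{EQ:1:explicit}. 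After the reduction, checking $T_2$ and $T_3$ becomes matching finitely many coefficients, each a weight-homogeneous polynomial in $a,b,d$ and their derivatives. I would carry this out symbolically, using the weight rule \eqref{property:3:2D} as a consistency check on each coefficient. With $T_q$ matched for $q\le 3$, the remainder is $\bo(h^4)$, which proves \eqref{truncation:error:h:4}.
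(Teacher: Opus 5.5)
Your Steps 1--2 follow the paper's proof: Taylor-expand to fifth order, collect the coefficients of $h^{-2},\dots,h^3$, and compare with $F_{i,j}$ after writing $f=\mathcal{L}u$ as in \eqref{L:u}. You diverge in Step 3, and there the premise is wrong, because the raw comparison does not fail. Once $f=\mathcal{L}u$ is substituted, as you already do in Step 2, $\Delta f$ contains $\Delta^2u=u_{xxxx}+2u_{xxyy}+u_{yyyy}$. This is exactly the contribution $\tfrac1{12}\Delta^2u$ that the $h^0$ weights $\tfrac16,\tfrac23,-\tfrac{10}3$ make to $T_2$. Fifth-order derivatives enter only $T_3$, and only through the even weights $c_{r,\ell,0}$ paired with odd monomials, so they cancel. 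The paper's proof shows that, treating all derivatives of $u$ as independent symbols, the following hold identically:
\[
T_1=T_3=0,\qquad T_2=\tfrac{1}{12}\bigl\{a(\mathcal{L}u)_x+b(\mathcal{L}u)_y+(\mathcal{L}u)_{xx}+(\mathcal{L}u)_{yy}\bigr\}.
\]
The odd coefficients vanish by parity. Under $(r,\ell)\mapsto(-r,-\ell)$, the $c_{r,\ell,p}$ in \eqref{C:Left:2D} are even for $p=0,2$ and odd for $p=1,3$. The only non-symmetric terms are the $h^4$ terms, and these have zero first moment, since $\tfrac{bd_y}{12}[(0,-1)-(1,-1)+(1,0)]=0$. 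For $T_2$, the pieces with $(p,k)=(0,4),(1,3),(2,2),(3,1),(4,0)$ are as follows. The first is $\tfrac1{12}\Delta^2u$. The second is $\tfrac16(a\Delta u_x+b\Delta u_y)$. The third is $\tfrac1{12}[(a^2+2a_x+d)u_{xx}+2(ab+a_y+b_x)u_{xy}+(b^2+2b_y+d)u_{yy}]$. The fourth is $\tfrac1{12}[(aa_x+ad+ba_y+2d_x+\Delta a)u_x+(ab_x+bb_y+bd+2d_y+\Delta b)u_y]$. The last is $\tfrac1{12}(ad_x+bd_y+\Delta d)u$. These sum to the right-hand side above, and then only $\mathcal{L}u=f$ is used.

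Your reduction modulo the PDE to $\{u^{(m,n)}:m\le1\}$ is not logically wrong. Reducing a true identity keeps it true, so the symbolic check would succeed. However, it is an unnecessary detour. It re-runs the $G_{5,m,n}/H_{5,m,n}$ machinery of \eqref{u:GH}--\eqref{EQ:1:explicit}, which the simple stencil was designed to avoid. As written, it also leaves the decisive $T_2$ and $T_3$ checks as an unexecuted computer-algebra step rather than an argument. The missing idea is the exact factorization $T_2=\tfrac1{12}(a\partial_x+b\partial_y+\Delta)(\mathcal{L}u)$, which makes the proof a short hand computation. A minor point: $T_1$ also receives contributions from the $h^0$ and $h^3$ coefficients, not only from the $h^1$ and $h^2$ ones, and these vanish by the same parity argument.
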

	\begin{proof}
		At the grid point $(x_i,y_j)\in \Omega$, the Taylor expansion  with the Lagrange remainder yields
		\be\label{taylor:basis:2D}
		u(x_i+x,y_j+y)
		=
		\sum_{\substack{0\le m,n \le M+1 \\ m+n\le M+1}}
		\frac{\partial^{m+n}u(x_i,y_j) }{\partial x^m \partial y^n}\frac{x^my^n}{m!n!}+\sum_{\substack{0\le m,n \le M+2  \\ m+n=M+2}} \frac{\partial^{M+2}u(\xi,\eta) }{\partial x^m \partial y^{n}}\frac{x^my^{n}}{m!n!},\qquad M\in \N,
		\ee
		where the point $(\xi,\eta)$ is in the open line segment between $(x_i,y_j)$ and $(x_i+x,y_j+y)$. Plug $M=4$ and $(x,y)=(rh,\ell h)$ into \eqref{taylor:basis:2D}, 
		\be \label{taylor:u}
		u(x_i+rh,y_j+\ell h)
		=
		\sum_{\substack{0\le m,n\le 5  \\ m+n\le 5}}
		\frac{\partial^{m+n}u(x_i,y_j) }{\partial x^m \partial y^n}\frac{(rh)^m(\ell h)^n}{m!n!}+\zeta_{i,j,r,\ell}h^6,
		\ee
		where  $\zeta_{i,j,r,\ell} < \infty$ only depends on $\{ \frac{\partial^{6}u(\xi,\eta) }{\partial x^m \partial y^{n}} :  0\le m,n \le 6 \text{ and }  m+n=6 \}$. 
		Substitute \eqref{C:Left:2D} and \eqref{taylor:u} into \eqref{Lh:u:1}, 
		\be\label{L:h:u:2}
		\mathcal{L}_h (u)_{i,j} :=\sum_{p=-2}^3\theta_p \Big|_{(x,y)=(x_i,y_j)}h^p+\bo(h^4),
		\ee
		where 
		\begin{align}
			&\theta_{-2}=\theta_{-1}=\theta_1=\theta_3=0,\qquad 	\theta_0=\Delta u  + a u_x +  bu_y +du, \notag\\
			&\theta_2=\tfrac{1}{12}\big[a^2u_{xx}+b^2u_{yy}+ 2a_{x}u_{xx}+2b_{y}u_{yy}+ 2(a_{y}+b_{x}-ab)u_{xy}\notag\\
			&\qquad \ +a(2\Delta u_x+2bu_{xy}+a_{x}u_{x}+b_{x}u_{y}+du_{x}+d_{x}u)\notag\\
			&\qquad \ +b(2\Delta u_y+2au_{xy}+a_{y}u_{x}+b_{y}u_{y}+du_{y}+d_{y}u)\notag\\
			&\qquad \ +(\Delta a+2d_{x})u_{x}+(\Delta b+2d_{y})u_{y}\notag\\
			&\qquad \ +d \Delta u +u\Delta d  +\Delta^2 u\big].\notag
		\end{align}
		After the direct calculation, 
		\begin{align*}
			\theta_2=&\tfrac{1}{12}\big\{	a[\Delta u_x+au_{xx}+bu_{xy}+(a_{x}+d)u_{x}+b_{x}u_{y}+d_{x}u]\\
			&+	b[\Delta u_y+bu_{yy}+au_{xy}+a_{y}u_{x}+(b_{y}+d)u_{y}+d_{y}u]\\
			&+\Delta^2 u+a\Delta u_x+b\Delta u_y+(2a_{x}+d)u_{xx}+(2b_{y}+d)u_{yy}\\
			&+2(a_{y}+b_{x})u_{xy}+(\Delta a+2d_{x})u_{x}+(\Delta b+2d_{y})u_{y}+u\Delta d \big\}.
		\end{align*}
		To prove the  fourth-order consistency of the proposed FDM in \eqref{FDMs:2D:u} for the equation \eqref{2D:linear:simple:eq}, we define that 
		\be\label{L:u}
		\mathcal{L} u:= \Delta u + au_x+bu_y+du.
		\ee
		Now, by the algebraic computation, it is straightforward to verify that
		\[
		\theta_0=	\mathcal{L} u, \qquad \theta_2=\tfrac{1}{12}\big\{a[\mathcal{L} u]_x+b[\mathcal{L} u]_y+[\mathcal{L} u]_{xx}+[\mathcal{L} u]_{yy}\big\}.
		\]
		As $\mathcal{L} u$ in \eqref{L:u} satisfies that $\mathcal{L} u=f$, we can say that \eqref{L:h:u:2} yields
		\be\label{L:h:u:3}
		\mathcal{L}_h (u)_{i,j} :=\theta_0+\theta_2h^2+\bo(h^4)=f\big|_{(x,y)=(x_i,y_j)}+\tfrac{h^2}{12}[af_x+bf_y+ \Delta f]\big|_{(x,y)=(x_i,y_j)}+\bo(h^4).
		\ee
		On the other hand,	\eqref{FDMs:2D:u} and \eqref{F:Right:2D} lead to
		\be\label{L:h:u:h:2}
		\mathcal{L}_h (u_h)_{i,j}=f\big|_{(x,y)=(x_i,y_j)}+\tfrac{h^2}{12}[ af_x+bf_y+\Delta f  ]\big|_{(x,y)=(x_i,y_j)}.
		\ee
		By \eqref{L:h:u:3} and \eqref{L:h:u:h:2}, we prove \eqref{truncation:error:h:4}.
	\end{proof}
	
	For the fourth-order compact 9-point FDM in \cref{thm:FDM:2D}, we have two properties in following \cref{prop:1,prop:lead:h}:
	\begin{proposition}\label{prop:1}
		The nine polynomials $C_{r,\ell}$ of the variable $h$ with $r,\ell=-1,0,1$ in \eqref{C:Left:2D} satisfy
		\be\label{sign:sum}
		\begin{split} 
			& C_{\pm 1,\pm 1}|_{h=0}=C_{\pm 1,\mp 1}|_{h=0}=\tfrac{1}{6}, \qquad C_{\pm 1,0}|_{h=0}=C_{0,\pm 1}|_{h=0}=\tfrac{2}{3},\qquad C_{0,0}|_{h=0}=-\tfrac{10}{3},\\
			& \sum_{k,\ell=-1}^1C_{k,\ell}=\sum_{q=0}^4 \varpi_qh^q, \quad \text{with} \quad \varpi_0=\varpi_1=0, \quad \varpi_2=d=-\lambda  / \kappa,\\
			& \sum_{r,\ell=-1}^1C_{r,\ell}=0 \quad \text{if} \quad  \lambda=0.
		\end{split}
		\ee
		Furthermore, $-\mathcal{L}_h (u_h)_{i,j} :=\frac{1}{h^2}\sum_{r,\ell=-1}^1 -C_{r,\ell}\big|_{(x,y)=(x_i,y_j)} (u_h)_{i+r,j+\ell}=-F_{i,j}\big|_{(x,y)=(x_i,y_j)}$ with $C_{r,\ell}$ in \eqref{C:Left:2D} and $F_{i,j}$ in \eqref{F:Right:2D} in \cref{thm:FDM:2D} is the   maximum principle preserving and monotone scheme, and forms an M-matrix for the sufficiently small $h$, if $\lambda\ge 0$ in $\Omega$ in \eqref{Linear:Elliptic:2D}.
	\end{proposition}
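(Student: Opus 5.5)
The plan is to read off the first two lines of \eqref{sign:sum} directly from \eqref{C:Left:2D}, and then obtain the M-matrix and maximum principle property from sign conditions on the stencil.

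\textbf{Step 1 (constant terms).} Setting $h=0$ in \eqref{C:Left:2D} gives $C_{\pm1,\pm1}|_{h=0}=C_{\pm1,\mp1}|_{h=0}=\tfrac16$, $C_{\pm1,0}|_{h=0}=C_{0,\pm1}|_{h=0}=\tfrac23$ and $C_{0,0}|_{h=0}=-\tfrac{10}{3}$. Hence $\varpi_0=4\cdot\tfrac16+4\cdot\tfrac23-\tfrac{10}{3}=0$.

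\textbf{Step 2 (higher powers of $h$).} I sum the coefficients of each power $h^q$ over all nine entries.
\begin{itemize}
\item For $h^1$: the corner terms cancel in the pairs $(-1,-1)/(1,1)$ and $(-1,1)/(1,-1)$, and $\mp\tfrac a3,\mp\tfrac b3$ cancel in the pairs $(\mp1,0)$, $(0,\mp1)$. So $\varpi_1=0$.
\item For $h^2$: adding $2\cdot\tfrac1{12}[a^2+ab+d+2a_x+a_y+b_x]$, $2\cdot\tfrac1{12}[b^2+ab+a_y+b_x+2b_y+d]$ and $-2\cdot\tfrac1{12}[ab+a_y+b_x]$ gives $\tfrac16[a^2+ab+b^2+2d+2a_x+a_y+b_x+2b_y]$. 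Adding the center term $-\tfrac16[a^2+ab+b^2-4d+\dots]$ leaves $\tfrac16\cdot 6d=d$. Thus $\varpi_2=d=-\lambda/\kappa$.
\item For $h^3$: the $\pm$ pairs in $C_{\pm1,0}$ and $C_{0,\pm1}$ cancel, so $\varpi_3=0$.
\item For $h^4$: the $bd_y$ terms give $\varpi_4=\tfrac1{12}(ad_x+\Delta d)+\tfrac{bd_y}{12}$.
\end{itemize}
If $\lambda=0$, then $d\equiv0$, so all of $d,d_x,d_y,\Delta d$ vanish, and the sum is $0$. This proves the second and third lines of \eqref{sign:sum}.

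\textbf{Step 3 (M-matrix).} Consider the matrix with entries $-C_{r,\ell}$ (the $1/h^2$ scaling is harmless). Since $a,b,d$ and their derivatives are bounded on $\overline\Omega$, Step 1 gives the following for sufficiently small $h$, uniformly in $(i,j)$:
\begin{itemize}
\item the diagonal entries satisfy $-C_{0,0}\ge \tfrac{10}{3}-Kh^2>0$;
\item the off-diagonal entries satisfy $-C_{r,\ell}\le -\tfrac16+Kh<0$.
\end{itemize}
Let $S=\sum_{r,\ell}C_{r,\ell}$. Then the row sum of $-C$ is $-S=-dh^2-\varpi_4h^4=\tfrac{\lambda}{\kappa}h^2-\varpi_4h^4$. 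This is the delicate point: when $\lambda>0$ the row sum is positive for small $h$, but when $\lambda$ vanishes somewhere, the $h^4$ term $\varpi_4$ could in principle be negative. I would handle this by noting that in rows touching the boundary, the moved Dirichlet neighbors make the row strictly diagonally dominant. For interior rows I would proceed as follows.
\begin{itemize}
\item Where $\lambda=0$ on a neighborhood, $d$ and all its derivatives vanish there, so $S=0$ exactly.
\item In general, I would compare the discrete operator with the standard five-point operator plus a small perturbation. One option is to argue by irreducible weak diagonal dominance after absorbing the $h^4$ term: bound $|\varpi_4|h^4$ by $\epsilon h^2$ times the off-diagonal mass and use $\lambda\ge0$ to keep weak dominance.
\item Alternatively, exhibit a positive vector $w$, e.g.\ $w=e^{\mu x}$, with $-\mathcal L_h w>0$. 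This gives $A w>0$ and hence the M-matrix property.
\end{itemize}
I expect the second route, the barrier function, to be the cleanest. The reason is that $-\mathcal L_h$ applied to a smooth $w$ equals $-(\Delta w+aw_x+bw_y+dw)+\bo(h^2)$, which is positive for small $h$ when $\mu$ is large and $d\le0$. Then the matrix is a Z-matrix (nonpositive off-diagonal entries) with a positive vector $w$ mapped to a positive vector, and this combination characterizes nonsingular M-matrices.

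\textbf{Step 4 (maximum principle and monotonicity).} Once $A$ is an M-matrix, $A^{-1}\ge0$. This gives monotonicity: $-F\le -\tilde F$ implies $u_h\le\tilde u_h$. It also gives the discrete maximum principle: the nonpositive off-diagonal entries and the nonnegative row sums (exact zero row sums when $\lambda=0$) yield the standard argument that $u_h$ attains its maximum on the boundary whenever $-\mathcal L_h u_h\le 0$.

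The main obstacle is the sign of the $h^4$ remainder in the row sum when $\lambda\ge0$ but is not strictly positive. I would resolve it via the barrier function rather than pointwise diagonal dominance.
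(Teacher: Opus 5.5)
Your check of \eqref{sign:sum} is correct and is exactly what the paper does: it says the identities follow directly from \eqref{C:Left:2D}. Your values $\varpi_3=0$ and $\varpi_4=\tfrac1{12}(ad_x+bd_y+\Delta d)$ are also right.

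For the M-matrix claim, the paper reads the sign condition off the $h=0$ values and the sum condition off $-\sum_{r,\ell}C_{r,\ell}=\tfrac{\lambda}{\kappa}h^2+\bo(h^4)$. Your worry that this glosses over the $h^4$ term is legitimate. The trouble sits at nodes where $0<\lambda\lesssim h^2$, not at exact zeros. At an interior zero of $\lambda\ge0$, $d$ attains its maximum $0$, so $d_x=d_y=0$, $\Delta d\le0$, and the row sum $-\tfrac{h^4}{12}\Delta d$ is nonnegative. Replacing the row-sum argument by a semipositivity (barrier) argument is therefore a sensible and more careful route.

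\textbf{The barrier you propose fails.} For $w=e^{\mu x}$,
\[
-\mathcal{L}w=-(\mu^2+a\mu+d)\,e^{\mu x},
\]
which is negative for every sufficiently large $\mu$. Since $w$ is convex, $-\Delta w<0$, and $d\le0$ contributes only $-d\,e^{\mu x}$, which cannot compensate $\mu^2$. You need a concave-type barrier, e.g.\ $w=K-e^{\mu x}$ with $\mu>\max_{\overline\Omega}|a|$ and $K>e^{\mu l_2}$. For this choice,
\[
-\mathcal{L}w=(\mu^2+a\mu)e^{\mu x}-d\,(K-e^{\mu x})\ \ge\ \mu\big(\mu-\max_{\overline\Omega}|a|\big)e^{\mu l_1}>0 .
\]
The expansion $\mathcal{L}_h w=\mathcal{L}w+\bo(h^2)$ from the proof of \cref{thm:FDM:2D} holds for any smooth $w$, uniformly in the node. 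Hence $-\mathcal{L}_h w>0$ at all interior nodes for small $h$. Removing Dirichlet neighbours only increases $(Aw)_{i,j}$, because their coefficients $-C_{r,\ell}$ are negative and $w>0$. With that replacement your Step 3 goes through and gives a nonsingular M-matrix for any $\lambda\ge0$.

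\textbf{Step 4 has an internal inconsistency.} $A^{-1}\ge0$, together with the nonnegative coupling to the boundary data, does give monotonicity and comparison. But you derive the discrete maximum principle (``the maximum is attained on the boundary'') from nonnegative row sums. That is exactly what you declined to assert in Step 3. The barrier alone does not yield that statement; you still need $\sum_{r,\ell}C_{r,\ell}\le0$ at interior nodes. This is clear when $\lambda\equiv0$ or $\lambda\ge c>0$, but not in the intermediate case you were worried about.
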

	\begin{proof}
		\eqref{sign:sum} is obtained straightforwardly by \eqref{notation:abdf} and \eqref{C:Left:2D}. Similar to discussions in \citep{FHM2024,LiIto2001,LiZhang2020,Vejchodsky2009,XuZikatanov1999},  if $\{ C_{r,\ell}\}_{r,\ell=-1,0,1}$  satisfy 
		\be\label{sign:and:sum}
		\begin{split}
			& -C_{0,0}>0, \qquad -C_{r,\ell}\le 0, \quad \text{if} \quad (r,\ell)\neq (0,0), \quad \text{the sign condition}; \\ 
			& \sum_{r,\ell}-C_{r,\ell}\ge 0,  \quad \text{the sum condition};
		\end{split}
		\ee
		then the corresponding FDM $\sum_{r,\ell} -C_{r,\ell} (u_h)_{i+r,j+\ell}=-h^2F_{i,j}$ with $u =g$ on $\partial \Omega$ is the maximum principle preserving and monotone scheme.
		Recall that a real square matrix is an M-matrix if  diagonal entries are positive, off-diagonal entries are non-positive, row sums are nonnegative, and at least one row sum is strictly positive \citep{FHM2024,LiIto2001,LiZhang2020,Vejchodsky2009,XuZikatanov1999}. Thus, we can say that $-\mathcal{L}_h (u_h)_{i,j}$ is monotone, satisfies the discrete maximum principle, and forms an M-matrix if $h$ is sufficiently small and $\lambda \ge 0$ by \eqref{sign:sum}, \eqref{sign:and:sum}, and $u =g$ on $(x,y)\in \partial \Omega$.
	\end{proof}
	
	In the following \cref{prop:lead:h}, we provide the explicit expression of the leading term of the truncation error for the FDM in \cref{thm:FDM:2D} for \eqref{2D:linear:simple:eq}. We use \cref{prop:lead:h} to analyze the truncation error of the FDM in \cref{thm:FDM:2D} for the linear time-dependent  equation \eqref{Linear:Parabolic:2D} in \cref{subse:liner:para:2D}.
	\begin{proposition}\label{prop:lead:h}
		The leading term of the truncation error of the fourth-order compact 9-point FDM in \cref{thm:FDM:2D}  for \eqref{2D:linear:simple:eq} is 
		\be\label{trunca:2D}
		\begin{split} 
			h^4\max_{(x,y)\in \Omega}\big\{& \tfrac{1}{72} [ \Delta b+ ab_x+(b_y+d)b+2d_y   ]u_{yyy}+\tfrac{1}{144} [ 2b_y + b^2+d   ]u_{yyyy}+\tfrac{b}{120}u_{yyyyy}\\
			&+\tfrac{b d_y}{12} u_{xy}+\tfrac{1}{36} [ a_y+b_x + ab ]u_{xyyy}+\tfrac{a}{72}u_{xyyyy}-\tfrac{1}{24} [a_y + b_x + ab]u_{xxyy}\\
			& + \tfrac{b}{36}u_{xxyyy}+\tfrac{1}{72} [ \Delta a + (a_x+d)a+a_yb+ 2d_x ]u_{xxx}+\tfrac{1}{36}[ a_y+b_x+ ab  ] u_{xxxy}\\
			& +\tfrac{a}{36} u_{xxxyy}+\tfrac{1}{144} [  a^2+2a_x +d  ]u_{xxxx}+\tfrac{b}{72}u_{xxxxy}+\tfrac{a}{120}u_{xxxxx} \big\}.
		\end{split}
		\ee
	\end{proposition}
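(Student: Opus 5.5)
The plan is to continue the Taylor-expansion computation from the proof of \cref{thm:FDM:2D} to one further order and identify the $h^4$ coefficient of the local truncation error. I will take the Taylor expansion \eqref{taylor:basis:2D} with $M=6$ instead of $M=4$, so that
\[
u(x_i+rh,y_j+\ell h)=\sum_{m+n\le 7}\frac{\partial^{m+n}u(x_i,y_j)}{\partial x^m\partial y^n}\frac{(rh)^m(\ell h)^n}{m!n!}+\bo(h^8).
\]
Substituting this and the polynomials \eqref{C:Left:2D} into \eqref{Lh:u:1} extends \eqref{L:h:u:2} to
\[
\mathcal{L}_h(u)_{i,j}=\theta_0+\theta_2h^2+\theta_3h^3+\theta_4h^4+\bo(h^5).
\]
Here $\theta_0,\theta_2$ are already known from the proof of \cref{thm:FDM:2D}, and the expected cancellation $\theta_3=0$ has to be confirmed. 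The truncation error is then
\[
\mathcal{L}_h(u)_{i,j}-F_{i,j}=\theta_4h^4+\bo(h^5),
\]
using $\mathcal{L}u=f$ and \eqref{F:Right:2D}. So the leading term is the maximum over $\Omega$ of $h^4\theta_4$.

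To compute $\theta_4$, I collect the $h^4$ contributions from every pair (power $h^p$ in $C_{r,\ell}$, derivative order $k=p'+2$ in the Taylor series) with $p+k-2=4$. These are:
\begin{itemize}
\item $p=0$ with sixth derivatives;
\item $p=1$ with fifth derivatives;
\item $p=2$ with fourth derivatives;
\item $p=3$ with third derivatives;
\item $p=4$ with second-order terms, including the $h^4$ coefficients $\tfrac{bd_y}{12}$ and $\tfrac{1}{12}[ad_x+\Delta d]$.
\end{itemize}
Each contribution is computed by evaluating the symmetric and antisymmetric sums $\sum_{r,\ell}C_{r,\ell}r^m\ell^n$. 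The stencil has symmetry $C_{-r,-\ell}\leftrightarrow C_{r,\ell}$ with the sign of odd-$h$ terms flipped, which makes most of these sums vanish or pair up. For instance, the $h^0$ part of the stencil against sixth derivatives gives $\tfrac{1}{360}(u_{x^6}+u_{y^6})+\tfrac{1}{72}(u_{x^4y^2}+u_{x^2y^4})$, and similarly for the others.

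The stated formula \eqref{trunca:2D} contains no sixth derivatives and no $\Delta^3$-type combination. I read it as the paper's convention: it records the stencil-dependent terms, presumably after using the PDE (and $F_{i,j}$) to absorb the combinations $\tfrac{1}{360}\Delta^2(\mathcal{L}u)$ etc. that also appear in $F$ at higher order. Equivalently, the listed expression is the raw remainder $\theta_4$ produced by the $h^1$–$h^4$ coefficients of $C_{r,\ell}$ acting on the Taylor terms of order $\le 5$, with the sixth-order remainder lumped into $\bo(h^4)$ as the $\zeta_{i,j,r,\ell}h^6$ term in \eqref{taylor:u}.

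So I will take the expansion in \eqref{taylor:u} at $M=4$ literally and keep the terms of $\sum C_{r,\ell}u_{i+r,j+\ell}$ of total order $h^6$ that come from Taylor monomials of degree $\le 5$. These are exactly:
\begin{itemize}
\item $C^{(1)}$ with fifth derivatives, giving the $\tfrac{b}{120}u_{yyyyy}$, $\tfrac{a}{72}u_{xyyyy}$, $\tfrac{b}{36}u_{xxyyy}$-type terms;
\item $C^{(2)}$ with fourth derivatives, giving the $\tfrac{1}{144}[2b_y+b^2+d]u_{yyyy}$, $\tfrac{1}{36}[a_y+b_x+ab]u_{xyyy}$-type terms;
\item $C^{(3)}$ with third derivatives, giving the $\tfrac{1}{72}[\Delta b+\cdots]u_{yyy}$-type terms;
\item $C^{(4)}$ with second and first derivatives, giving $\tfrac{bd_y}{12}u_{xy}$ together with cancellations against the $ad_x+\Delta d$ terms.
\end{itemize}
Dividing by $h^2$ gives the $h^4$ coefficient. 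I will then match these term by term against \eqref{trunca:2D}.

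The main obstacle is bookkeeping. About thirty moment sums must be evaluated correctly, and I need to verify which lower-order pieces cancel against $F_{i,j}$, for example that the $p=4$, $k\le 1$ contributions from $C_{0,0},C_{\pm1,0},C_{\pm1,\mp1}$ leave only $\tfrac{bd_y}{12}u_{xy}$. I would do this by symbolic computation, organizing the $p$-th coefficient layer by layer and exploiting the antisymmetry of the odd-$p$ coefficients to halve the work.
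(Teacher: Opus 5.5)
Your final plan is the paper's proof. The paper obtains \eqref{trunca:2D} by repeating \eqref{taylor:u}--\eqref{L:h:u:3} with $M=4$ and reading off the $h^4$ coefficient that the $h^1$--$h^4$ layers of $C_{r,\ell}$ produce when acting on Taylor monomials of degree $6-p$. Carrying this out reproduces \eqref{trunca:2D} term by term:
the $h^1$ layer gives $\tfrac{a}{120}u_{xxxxx}+\tfrac{a}{36}u_{xxxyy}+\tfrac{a}{72}u_{xyyyy}$ and its $b$-analogue; the $h^2$ layer gives the fourth-derivative terms; the $h^3$ layer gives only $u_{xxx}$ and $u_{yyy}$; the $h^4$ layer gives only $\tfrac{bd_y}{12}u_{xy}$.

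Two of your side remarks are wrong and should be dropped.

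First, your two readings of the missing sixth-derivative terms are not equivalent, and the first one is false. $F_{i,j}$ in \eqref{F:Right:2D} has no $h^4$ term. Moreover, the coefficients in \eqref{trunca:2D} coincide exactly with the raw moments of the $h^1$--$h^4$ layers, so nothing was absorbed into $F$ or rewritten via $\mathcal{L}u=f$. The $h^0$-layer contribution $\tfrac{1}{360}(u_{x^6}+u_{y^6})+\tfrac{1}{72}(u_{x^4y^2}+u_{x^2y^4})$ that you computed is simply left inside the Lagrange remainder $\zeta_{i,j,r,\ell}h^6$ of \eqref{taylor:u}. Your $M=6$ version would return the bracket in \eqref{trunca:2D} plus this expression. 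So \eqref{trunca:2D}, read literally, is only the part of the $h^4$ coefficient coming from the $h^1$--$h^4$ layers of the stencil.

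Second, your description of the $h^4$ layer is off.
\begin{itemize}
\item Only $C_{0,-1}$, $C_{0,0}$, $C_{1,0}$ and $C_{1,-1}$ carry $h^4$ terms.
\item This layer is not symmetric under $(r,\ell)\mapsto(-r,-\ell)$; the symmetry you invoke holds only for $p\le 3$.
\item It reaches order $h^4$ only through second derivatives. Its first-derivative moments land at $h^3$ (and vanish there), not at $h^4$.
\item The term $\tfrac{1}{12}[ad_x+\Delta d]$ of $C_{0,0}$ has zero moments of order $\ge 1$, so it plays no role at $h^4$.
\end{itemize}
The actual cancellation is among the entries $\pm\tfrac{bd_y}{12}$. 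These annihilate the $u_{xx}$ and $u_{yy}$ moments and leave $\tfrac{bd_y}{12}u_{xy}$, coming from $C_{1,-1}$.
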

	\begin{proof}
		\eqref{trunca:2D} is obtained by calculating the coefficient of $h^4$ in \eqref{L:h:u:3} by repeating \eqref{taylor:u}--\eqref{L:h:u:3}.
	\end{proof}
	\begin{remark}
		Form \eqref{C:Left:2D} and \eqref{F:Right:2D}, we only need to compute $ \{  \tfrac{\partial^{m+n} \varrho(x,y) }{ \partial x^m \partial y^n } : m+n\le 2  \}$ with $\varrho=a,b,d,f$ to achieve the consistency order four, which guarantees the efficiency of the FDM for nonlinear  convection-diffusion-reaction equations in \cref{subse:non:liner:elli:2D,subse:non:liner:para:2D}. Furthermore, the M-matrix property can improve the stability and accuracy when solving the corresponding linear system to compute the numerical solution. 
	\end{remark}

	\subsection{Fourth-order  compact 9-point FDM for the linear time-dependent  equation in 2D}\label{subse:liner:para:2D}		
	In this subsection, we discuss the following 2D	linear time-dependent  equation in the temporal domain $I:=[0,T]$ and the spatial domain $\Omega:=(l_1,l_2)^2$: 
	\be\label{Linear:Parabolic:2D}
	\begin{cases}
		u_t-\nab\cdot (\kappa \nab u) +   \alpha u_x +   \beta u_y +   \lambda u = \phi, \qquad \qquad  &  (x,y)\in \Omega \quad \hspace{1.8mm} \text{and} \quad  t\in I,\\
		u =g,  & (x,y)\in \partial \Omega \quad \text{and} \quad  t\in I,\\
		u=u^0,  & (x,y)\in  \Omega \quad \hspace{1.8mm}  \text{and} \quad t=0,
	\end{cases}
	\ee
	where $\kappa=\kappa(x,y,t)>0$,  $\alpha= \alpha(x,y,t)$, $\beta= \beta(x,y,t)$, $\lambda= \lambda(x,y,t)$, and $\phi=\phi(x,y,t)$ are smooth variable functions in $\Omega$ and $I$. Various methods to solve \eqref{Linear:Parabolic:2D} can be found in \citep{Hundsdorfer2003}.
	We also use the uniform mesh for the time discretization as follows:
	\be\label{tim:discre} 
	u^{n+i}:=u|_{t=t_{n+i}},\qquad t_{n+i}:=(n+i) \tau, \qquad i=0,\tfrac{1}{2},1,2,3,4,\qquad \tau:=T/N_2, \qquad N_2\in \N.
	\ee 
	Form \eqref{Linear:Parabolic:2D}, we have
	\be\label{reform:para}
	u_t-\kappa \Delta u  + (\alpha-\kappa_x) u_x +   (\beta-\kappa_y) u_y +   \lambda u = \phi.
	\ee
	For simplification, we also define
	\be\label{ani:bni} 
	a^{n+i}:=\frac{\kappa^{n+i}_x-\alpha^{n+i}}{\kappa^{n+i}}, \qquad b^{n+i}:=\frac{\kappa^{n+i}_y-\beta^{n+i}}{\kappa^{n+i}}, \qquad i=\tfrac{1}{2},3,4.
	\ee
	
	\textbf{The second-order Crank-Nicolson (CN) method \citep{Burkardt2020}:}
	Now, the second-order CN method with \eqref{Linear:Parabolic:2D} and \eqref{reform:para} leads to  
	\be\label{original:CN}
	\begin{cases}
		& \hspace{-0.3cm} \displaystyle{\frac{u^{n+1/2}-u^{n}}{ \tau/2}-  \kappa^{n+1/2} \Delta u^{n+1/2}} \displaystyle{ +[\alpha^{n+1/2}-\kappa^{n+1/2}_x] u^{n+1/2}_x +[\beta^{n+1/2}-\kappa^{n+1/2}_y] u^{n+1/2}_y}\\
		&+ \lambda^{n+1/2} u^{n+1/2}   = \phi^{n+1/2}, \\
		& \hspace{-0.3cm} u^{n+1}=2u^{n+1/2}-u^{n}, \quad  \text{ with the given } u^0\in \Omega \text{ and }  \ u^{n+1/2}\in \partial \Omega \text{ in } \eqref{Linear:Parabolic:2D}.
	\end{cases}
	\ee
	Then
	\[
	\begin{split} 
		& \Delta u^{n+1/2}  +\frac{\kappa^{n+1/2}_x-\alpha^{n+1/2}}{\kappa^{n+1/2}} u^{n+1/2}_x +\frac{\kappa^{n+1/2}_y-\beta^{n+1/2}}{\kappa^{n+1/2}} u^{n+1/2}_y -\left[\frac{2}{\tau}\frac{1}{\kappa^{n+1/2}} +\frac{\lambda^{n+1/2}}{\kappa^{n+1/2}}\right] u^{n+1/2}  \\
		& = \frac{\phi^{n+1/2}}{-\kappa^{n+1/2}  }-\frac{2}{\tau}\frac{u^{n}}{\kappa^{n+1/2}}.
	\end{split} 
	\]
	Let
	\be\label{nota:dn:half}
	d^{n+1/2}:=-\frac{2}{\tau}\frac{1}{\kappa^{n+1/2}} -\frac{\lambda^{n+1/2}}{\kappa^{n+1/2}}, \qquad f^{n+1/2}:=\frac{\phi^{n+1/2}}{-\kappa^{n+1/2}  }-\frac{2}{\tau}\frac{u^{n}}{\kappa^{n+1/2}}.
	\ee
	The second-order CN method yields
	\be\label{CN:eq}
	\begin{split}
		\Delta u^{n+1/2}  +a^{n+1/2} u^{n+1/2}_x +b^{n+1/2} u^{n+1/2}_y +d^{n+1/2} u^{n+1/2}   &= f^{n+1/2},\\
		u^{n+1}& =2u^{n+1/2}-u^{n},
	\end{split}
	\ee
	where $a^{n+1/2}$ and $b^{n+1/2}$ are defined in \eqref{ani:bni} with $i=1/2$.
	
	\textbf{The third-order backward differentiation formula (BDF3) \citep{Hairer1993}:} Similarly,  the third-order BDF3 method with \eqref{Linear:Parabolic:2D} and \eqref{reform:para} implies
	\be\label{original:BDF3}
	\begin{split}
		&  \frac{11u^{n+3}-18u^{n+2}+9u^{n+1}-2u^{n} }{ 6 \tau}-  \kappa^{n+3} \Delta u^{n+3}+(\alpha^{n+3}-\kappa^{n+3}_x) u^{n+3}_x\\
		&  +(\beta^{n+3}-\kappa^{n+3}_y) u^{n+3}_y+\lambda^{n+3}u^{n+3}= \phi ^{n+3}, \\
		& \text{with the given } u^0\in \Omega \text{ and } u^{n+3}\in \partial \Omega \text{ in } \eqref{Linear:Parabolic:2D},\\
		& \text{where } u^1 \text{ and } u^2 \text{ are computed by the CN method } \eqref{CN:eq}. 
	\end{split}
	\ee
	Then
	\[
	\begin{split}
		&    \Delta u^{n+3}+\frac{ \kappa^{n+3}_x-\alpha^{n+3}}{\kappa^{n+3}} u^{n+3}_x+\frac{ \kappa^{n+3}_y-\beta^{n+3}}{\kappa^{n+3}} u^{n+3}_y\\
		& -\left(\frac{11}{6\tau } \frac{1}{\kappa^{n+3}}+\frac{\lambda^{n+3}}{\kappa^{n+3}}\right)u^{n+3}= -\frac{\phi ^{n+3}}{\kappa^{n+3}  }-\frac{1}{6 \tau }\frac{1}{\kappa^{n+3} }(18u^{n+2}-9u^{n+1}+2u^{n}).
	\end{split}
	\]
	Let 
	\be\label{nota:dn3}
	d^{n+3}:=-\frac{11}{6\tau } \frac{1}{\kappa^{n+3}}-\frac{\lambda^{n+3}}{\kappa^{n+3}}, \qquad f^{n+3}:=-\frac{\phi ^{n+3}}{\kappa^{n+3}  }-\frac{1}{6 \tau }\frac{1}{\kappa^{n+3} }(18u^{n+2}-9u^{n+1}+2u^{n}).
	\ee
	Then the third-order BDF3 method establishes
	\be\label{BDF3:eq}
	\Delta u^{n+3}  +a^{n+3} u^{n+3}_x +b^{n+3} u^{n+3}_y +d^{n+3} u^{n+3}   = f^{n+3},
	\ee
	where $a^{n+3}$ and $b^{n+3}$ are defined in \eqref{ani:bni} with $i=3$.
	
	\textbf{The fourth-order backward differentiation formula (BDF4) \citep{Hairer1993}:} Finally, the fourth-order BDF4 method with \eqref{Linear:Parabolic:2D} and \eqref{reform:para} results in
	\be\label{original:BDF4}
	\begin{split}
		&  \frac{25u^{n+4}-48u^{n+3}+36u^{n+2}-16u^{n+1}+3u^{n} }{ 12 \tau} -  \kappa^{n+4} \Delta u^{n+4}+(\alpha^{n+4}-\kappa^{n+4}_x) u^{n+4}_x\\
		&  +(\beta^{n+4}-\kappa^{n+4}_y) u^{n+4}_y+\lambda^{n+4}u^{n+4}= \phi ^{n+4}, \\
		& \text{with the given } u^0\in \Omega \text{ and } u^{n+4}\in \partial \Omega \text{ in } \eqref{Linear:Parabolic:2D},\\
		& \text{where } u^1, u^2, \text{ and } u^3 \text{ are computed by the CN method } \eqref{CN:eq}.
	\end{split}
	\ee
	Then
	\[
	\begin{split}
		&    \Delta u^{n+4}+\frac{ \kappa^{n+4}_x-\alpha^{n+4}}{\kappa^{n+4}} u^{n+4}_x+\frac{ \kappa^{n+4}_y-\beta^{n+4}}{\kappa^{n+4}} u^{n+4}_y\\
		& -\left(\frac{25}{12\tau } \frac{1}{\kappa^{n+4}}+\frac{\lambda^{n+4}}{\kappa^{n+4}}\right)u^{n+4}= -\frac{\phi ^{n+4}}{\kappa^{n+4}  }-\frac{1}{12 \tau }\frac{1}{\kappa^{n+4} }(48u^{n+3}-36u^{n+2}+16u^{n+1}-3u^{n}).
	\end{split}
	\]
	Let 
	\be\label{nota:dn4}
	d^{n+4}:=-\frac{25}{12\tau } \frac{1}{\kappa^{n+4}}-\frac{\lambda^{n+4}}{\kappa^{n+4}},
	\ee
	and
	\be\label{nota:BDF4:f}
	f^{n+4}:=-\frac{\phi ^{n+4}}{\kappa^{n+4}  }-\frac{1}{12 \tau }\frac{1}{\kappa^{n+4} }(48u^{n+3}-36u^{n+2}+16u^{n+1}-3u^{n}).
	\ee
	Then the BDF4 method develops
	\be\label{BDF4:eq}
	\Delta u^{n+4}  +a^{n+4} u^{n+4}_x +b^{n+4} u^{n+4}_y +d^{n+4} u^{n+4}   = f^{n+4},
	\ee
	where $a^{n+4}$ and $b^{n+4}$ are defined in \eqref{ani:bni} with $i=4$.
	
	In this subsection, we first use the compact 9-point FDM in \cref{thm:FDM:2D} to solve \eqref{CN:eq}, \eqref{BDF3:eq}, and \eqref{BDF4:eq}. By $\kappa>0$,  \cref{prop:1} and definitions of $d^{n+1/2}$, $d^{n+3}$, $d^{n+4}$, we have the following proposition.
	\begin{proposition}\label{propos:tau:h}
		The FDM $-\mathcal{L}_h (u_h)_{i,j}$ in \cref{thm:FDM:2D} to solve \eqref{CN:eq}, \eqref{BDF3:eq}, and \eqref{BDF4:eq} is the maximum principle preserving and monotone scheme, and forms an M-matrix for the sufficiently small $h$, if $\lambda\ge 0$ in $\Omega \cup I$ in \eqref{Linear:Parabolic:2D}. If $\tau$ and $h$ are both sufficiently small, then discrete maximum principle, monotone, and M-matrix properties can be guaranteed for any $\lambda$.
	\end{proposition}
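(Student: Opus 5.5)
The plan is to reduce the statement to \cref{prop:1}, applied at each time level to the elliptic problems \eqref{CN:eq}, \eqref{BDF3:eq}, \eqref{BDF4:eq}. Each has the form \eqref{2D:linear:simple:eq}, with the reaction coefficient $d$ replaced by $d^{n+1/2}$, $d^{n+3}$ or $d^{n+4}$ from \eqref{nota:dn:half}, \eqref{nota:dn3}, \eqref{nota:dn4}. By \cref{prop:1} it suffices to check two things: the sign condition and the sum condition in \eqref{sign:and:sum} for the stencil \eqref{C:Left:2D} built from $a^{n+i}$, $b^{n+i}$, $d^{n+i}$.

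\textbf{Sign condition.} The leading parts of $C_{r,\ell}$ at $h=0$ are $\tfrac16$, $\tfrac23$ and $-\tfrac{10}{3}$, by \eqref{sign:sum}. The correction terms are $\bo(h)$, with coefficients that are polynomials in $a,b,d$ and their derivatives up to order two. Here $\tau$ is fixed while $h\to0$, so these coefficients are bounded. Hence for $h$ sufficiently small we get $-C_{0,0}>0$ and $-C_{r,\ell}<0$ for $(r,\ell)\neq(0,0)$.

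\textbf{Sum condition.} By \eqref{sign:sum},
\[
\sum_{r,\ell}-C_{r,\ell}=-d^{n+i}h^2-\varpi_3h^3-\varpi_4h^4 .
\]
For example,
\[
-d^{n+1/2}=\frac{2}{\tau\kappa^{n+1/2}}+\frac{\lambda^{n+1/2}}{\kappa^{n+1/2}}.
\]
If $\lambda\ge0$, this is at least $\tfrac{2}{\tau\kappa}>0$ uniformly. Then the positive $h^2$ term dominates the $\bo(h^3)$ remainder for small $h$, and the row sums are strictly positive. This gives the maximum principle, monotonicity and the M-matrix property exactly as in the proof of \cref{prop:1}. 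The BDF3 and BDF4 cases are identical, with constants $\tfrac{11}{6\tau}$ and $\tfrac{25}{12\tau}$.

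\textbf{Arbitrary $\lambda$.} Since $\kappa$ is bounded above on $\overline\Omega\times I$, $\tfrac{2}{\tau\kappa}\ge \tfrac{2}{\tau\max\kappa}$. So once $\tau<2\min\kappa/\max|\lambda|$ (and analogously for BDF3 and BDF4), we have $-d^{n+i}\ge c_0>0$ whatever the sign of $\lambda$. The same row-sum argument then applies.

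\textbf{Main obstacle: the interplay of $\tau$ and $h$.} The coefficients $\varpi_3$, $\varpi_4$, and the corrections in $C_{r,\ell}$ involve $d$, $d_x$, $d_y$, $\Delta d$, so they scale like $1/\tau$. For instance, the $h^2$ term of $C_{\pm1,0}$ contains $\tfrac{d}{12}h^2\sim -h^2/\tau$, and the $h^4$ terms contain $bd_y h^4\sim h^4/\tau$.

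\emph{Sign condition.} This requires $h^2/\tau$ small. Note the terms of $C_{0,0}$ of this kind, $\tfrac{4d}{6}h^2<0$, only help. For the off-diagonal entries I would check that the terms of order $h^2/\tau$ are bounded by $\tfrac{1}{12}h^2|d|$. The sign condition then holds when $h^2\lesssim\tau$, i.e.\ "$h$ sufficiently small" relative to $\tau$.

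\emph{Sum condition.} Here $-d h^2\sim h^2/\tau$ dominates $\varpi_3h^3+\varpi_4h^4$. Since $\varpi_q$ is linear in $d$ and its derivatives, I would write $\varpi_3=d\,\tilde\varpi_3+\dots$ with $\tilde\varpi_3$ bounded. The ratio of the remainder to the leading term is then $\bo(h)$ uniformly in $\tau$, and the sum condition holds for small $h$.

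I would state this dependence explicitly: "sufficiently small $h$" is for fixed $\tau$, or under $h^2\le C\tau$.
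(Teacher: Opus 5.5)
Your argument is the paper's: reduce to \cref{prop:1}, take the sign condition from the $h=0$ values $\tfrac16,\tfrac23,-\tfrac{10}{3}$, and take the sum condition from $\varpi_2=d^{n+i}<0$ (for $\lambda\ge 0$, or for any $\lambda$ once $\tau$ is small). Your further point is a correct refinement that the paper leaves implicit: $d^{n+i}=\bo(1/\tau)$ makes the required smallness of $h$ depend on $\tau$, roughly $h^2\lesssim\tau$ for the off-diagonal signs, while the sum condition is uniform in $\tau$ because in fact $\varpi_3=0$ and $\varpi_4=\tfrac{1}{12}(ad_x+bd_y+\Delta d)=\bo(1/\tau)$.

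One slip: both terms of $d^{n+1/2}$ carry the same factor $1/\kappa^{n+1/2}$, so the correct threshold is $\tau<2/\max|\lambda|$ (resp.\ $\tfrac{11}{6\max|\lambda|}$ and $\tfrac{25}{12\max|\lambda|}$ for BDF3 and BDF4). Your $\tau<2\min\kappa/\max|\lambda|$ does not suffice in general when $\min\kappa>1$.
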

	\begin{proof}
		By \cref{prop:1}, the FDM $-\mathcal{L}_h (u_h)_{i,j}$ in \cref{thm:FDM:2D} to solve \eqref{CN:eq}, \eqref{BDF3:eq}, and \eqref{BDF4:eq} satisfies
		\begin{align*}
			& -C_{0,0}|_{h=0}=\tfrac{10}{3}>0, \qquad -C_{r,\ell}|_{h=0}=-\tfrac{1}{6}, -\tfrac{2}{3}< 0, \quad \text{if} \quad (r,\ell)\neq (0,0), \\ 
			& \sum_{k,\ell=-1}^1 -C_{k,\ell}=\sum_{q=2}^4 -\varpi_qh^q, \quad \text{where} \\
			& \varpi_2=d=d^{n+1/2} && \hspace{-9cm} \text{ for } \eqref{CN:eq},\\               
			& \varpi_2=d=d^{n+3} && \hspace{-9cm} \text{ for }  \eqref{BDF3:eq}, \\ 
			& \varpi_2=d=d^{n+4} && \hspace{-9cm} \text{ for }  \eqref{BDF4:eq}. 
		\end{align*} 
		If  $\lambda\ge 0$, then each of $d^{n+1/2}$, $d^{n+3}$, $d^{n+4}$ in \eqref{nota:dn:half}, \eqref{nota:dn3}, and  \eqref{nota:dn4} is negative by $\kappa>0$. Furthermore, \eqref{nota:dn:half}, \eqref{nota:dn3}, and  \eqref{nota:dn4} with $\kappa>0$ also imply that  each of $d^{n+1/2}$, $d^{n+3}$, $d^{n+4}$ is negative for any $\lambda$, if $\tau$ is sufficiently small. Now we can prove this proposition by the same statement of the proof for \cref{prop:1}. 
	\end{proof}
	\begin{remark}\label{remark:order:3}
		We numerically solve \eqref{CN:eq} with $\tau=h/2$ (the CN method) to obtain $u^1$ and $u^2$ for  \eqref{BDF3:eq} (the BDF3 method), and compute $u^1,u^2,u^3$ for  \eqref{BDF4:eq} (the BDF4 method).  As \eqref{trunca:2D} in \cref{prop:lead:h} contains the function $d$,  the term $\tfrac{25}{12\tau } \frac{-1}{\kappa^{n+4}}$ in $d^{n+4}$ in \eqref{nota:dn4} implies that the degree of  $h$	of the leading term of the truncation error of the FDM in \cref{thm:FDM:2D} to solve  \eqref{BDF4:eq} is three if $\tau=rh$ with the positive constant $r$, which implies that the consistency order of the BDF4 method can only be three with $\tau=rh$. We can choose $\tau$ to be the sufficiently small constant for the different spatial mesh size $h$ to achieve the fourth-order accuracy for CN, BDF3, BDF4 methods. But the small time step $\tau$ significantly increases the computational time, especially for the small $h$. To demonstrate that the compact 9-point FDM in \cref{thm:FDM:2D} with the simple stencil (our first main result) is accurate for solving \eqref{Linear:Parabolic:2D}, although its consistency order is at most three, we also	
		propose the following algorithm to derive the fourth-order compact 9-point FDM to solve \eqref{CN:eq}, \eqref{BDF3:eq}, and \eqref{BDF4:eq} for comparisons. 
	\end{remark}	
	
	{\bf{ The algorithm to derive the fourth-order FDM:}} Let 
	\be\label{tau:f:f1:f2}
	\tau:=rh \ \text{ with the positive constant} \ r, \qquad \Qf:=\Qf_1+ \tau \Qf_2=  \Qf_1+ \tfrac{1}{rh} \Qf_2.
	\ee
	Then \eqref{CN:eq}, \eqref{BDF3:eq}, and \eqref{BDF4:eq} result in
	\be\label{para:simple:1}
	\Delta \Qu  +  \Qa \Qu_x + \Qb  \Qu_y + \tfrac{1}{rh} \Qd_1 \Qu  +  \Qd_2 \Qu   =\Qf,
	\ee
	where
	\be\label{d1:d2:f1:f2}
	\begin{split}
		& \Qu=u^{n+1/2}, \quad \Qa=a^{n+1/2}, \quad   \Qb=b^{n+1/2}, \quad   \Qd_1=\frac{-2}{\kappa^{n+1/2}}, \quad  \Qd_2=\frac{-\lambda^{n+1/2}}{\kappa^{n+1/2}},  \\
		&  \Qf_1=\frac{-\phi^{n+1/2}}{\kappa^{n+1/2}}, \quad  \Qf_2=\frac{-2u^{n}}{\kappa^{n+1/2}}, \qquad \text{for the CN method } \eqref{CN:eq};    \\
		& \Qu=u^{n+3}, \quad \Qa=a^{n+3}, \quad   \Qb=b^{n+3}, \quad   \Qd_1=\frac{-11}{6 \kappa^{n+3}}, \quad  \Qd_2=\frac{-\lambda^{n+3}}{\kappa^{n+3}},   \\
		&  \Qf_1=\frac{-\phi^{n+3}}{\kappa^{n+3}}, \quad  \Qf_2=\frac{18u^{n+2}-9u^{n+1}+2u^{n}}{-6\kappa^{n+3}}, \qquad \text{for the BDF3 method } \eqref{BDF3:eq};\\
		& \Qu=u^{n+4}, \quad \Qa=a^{n+4}, \quad   \Qb=b^{n+4}, \quad   \Qd_1=\frac{-25}{12\kappa^{n+4}}, \quad  \Qd_2=\frac{-\lambda^{n+4}}{\kappa^{n+4}},   \\
		&  \Qf_1=\frac{-\phi^{n+4}}{\kappa^{n+4}}, \quad  \Qf_2=\frac{48u^{n+3}-36u^{n+2}+16u^{n+1}-3u^{n}}{-12\kappa^{n+4}}, \qquad \text{for the BDF4 method } \eqref{BDF4:eq}. 
	\end{split}
	\ee
	
	Recall that \cref{thm:FDM:2D} is obtained by symbolically solving \eqref{EQ:1:explicit}--\eqref{EQ:2:explicit} with \eqref{property:1:2D}--\eqref{property:3:2D}. 
	Similar to \eqref{conve:diff:eq}--\eqref{EQ:2:explicit} and according to the symbolic computation, we propose the fourth-order compact 9-point FDM to solve \eqref{para:simple:1} in the
	following theorem.
	\begin{theorem} \label{thm:FDM:parabo:2D}
		Let $\kappa>0,u,\alpha,\beta,\lambda, \phi$ be smooth for $(x,y)\in \overline{\Omega}$ and $t\in I$ in \eqref{Linear:Parabolic:2D}, and define
		\be\label{Lh:uh:para}
		\begin{split}
			\mathcal{L}_h (u_h)_{i,j} :=&\frac{1}{h^2}\sum_{r,\ell=-1}^1 C_{r,\ell}\Big|_{(x,y)=(x_i,y_j)}(u_h)_{i+r,j+\ell}=\frac{1}{h^2}\sum_{r,\ell=-1}^1 \sum_{p=0}^5 c_{r,\ell,p} h^p\Big|_{(x,y)=(x_i,y_j)} (u_h)_{i+r,j+\ell}\\
			=&F_{i,j}\Big|_{(x,y)=(x_i,y_j)},
		\end{split}
		\ee
		where
		\[
		\begin{split}
			F_{i,j}:=& \text{the terms of } \Big(h^{-2} \sum_{(m,n) \in \ind_{	 3}}  \frac{ \partial^{m+n}\textup \Qf(x,y) } {\partial x^m \partial y^n}  \\
			&\times \sum_{r,\ell=-1}^{1} \sum_{p=0}^5 c_{r,\ell,p} h^p\Big|_{(x,y)=(x_i,y_j)} H_{5,m,n}  (rh, \ell h)\Big) \text{ with degree}   \le 3 \text{ in } h,
		\end{split}
		\]
		and $c_{r,\ell,p}$ can be uniquely determined by solving
		\[
		I_{5,m,n}=\sum_{r,\ell=-1}^{1} G_{5,m,n}  (rh, \ell h) \sum_{p=0}^5 c_{r,\ell,p} \big|_{(x,y)=(x_i,y_j)} h^p =\bo(h^{6}) \quad \mbox{for all}  \quad (m,n)\in \ind_{5}^1, 
		\]
		with
		\[
		\text{the lowest degree term of } h \ (\text{the }h^0 \text{ term}) \text{ in }  F_{i,j}=   \textup\Qf,
		\]
		and satisfying
		\[
		\begin{split}
			\{&c_{-1, 0, 5}, c_{-1, 1, p_1}, c_{0, -1, 5}, c_{0, 0, p_1}, c_{0, 1, p_2}, c_{1, -1, p_2}, c_{1, 0, p_3}, c_{1, 1, p_4}\}=\{0\}\quad \text{and} \quad c_{0,0,0}=-10/3,
		\end{split}
		\]
		for $p_1=4,5$, $p_2=3,4,5$, $p_3=2,\dots,5$, $p_4=1,\dots,5$, $\ind_{5}^1$ is defined in \eqref{ind:2D}, $G_{5,m,n}(x,y)$ and $H_{5,m,n}(x,y)$ can be uniquely determined by applying \citep[eqs. (3)--(36)]{FengTrenchea2026} with replacing \eqref{conve:diff:eq} by \eqref{para:simple:1} and considering $rh$ as the constant. Then \eqref{Lh:uh:para} achieves a consistency order four for the PDE \eqref{para:simple:1} at $(x_i,y_j)$ in \eqref{xiyj:2D:space}.
	\end{theorem}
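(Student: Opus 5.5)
The argument follows the derivation scheme \eqref{u:GH}--\eqref{L:h:u:uh}, now applied to \eqref{para:simple:1}. The one new feature is that the reaction coefficient $\tfrac{1}{rh}\Qd_1+\Qd_2$ carries a factor $h^{-1}$.

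\textbf{Step 1: the local expansion.} Since $\tau=rh$ with $r$ a fixed constant, I rewrite \eqref{para:simple:1} as $\Delta \Qu+\Qa\Qu_x+\Qb\Qu_y+\tilde d\,\Qu=\Qf$ with $\tilde d:=\tfrac{1}{rh}\Qd_1+\Qd_2$. I then repeat \citep[eqs. (3)--(36)]{FengTrenchea2026}. That is, I expand $\Qu$ about $(x_i,y_j)$ and use the PDE and its derivatives up to order three to eliminate every $u^{(m,n)}$ with $m\ge 2$ in favour of $\{u^{(m,n)}:(m,n)\in\ind_5^1\}$ and $\{\Qf^{(m,n)}:(m,n)\in\ind_3\}$.

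The substitution step $u^{(m+2,n)}=\Qf^{(m,n)}-u^{(m,n+2)}-\cdots-\tilde d\,u^{(m,n)}-\cdots$ brings in factors $\tfrac{1}{rh}\Qd_1$. Each such factor is paired with two powers of $x$ coming from the Taylor monomial. So, with $x,y=\bo(h)$, each occurrence costs one net power of $h$ relative to a genuinely $h$-independent reaction term. I will check by induction on the elimination depth that the remainder is still $\bo(h^5)$ rather than $\bo(h^6)$ after the $h^{-2}$ scaling bookkeeping. If it is not, I would increase the truncation index (use $\ind_6^1$ internally) so that the final remainder in \eqref{u:GH} is $\bo(h^6)$ uniformly in $h$. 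Treating $rh$ as a constant, as the statement prescribes, makes $G_{5,m,n}$ and $H_{5,m,n}$ uniquely defined polynomials whose coefficients are rational in $h$, with poles only at $h=0$ and of bounded order.

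\textbf{Step 2: reduce consistency to the moment conditions.} Substituting into $\mathcal{L}_h$ gives, exactly as in \eqref{Lh:u}--\eqref{L:h:u:uh},
\[
h^{-2}\mathcal{L}_h(u-u_h)_{i,j}=h^{-2}\sum_{(m,n)\in\ind_5^1}u^{(m,n)}I_{5,m,n}+\bo(h^4).
\]
Here $F_{i,j}$ is defined as the truncation of the $\Qf$-part to degree $\le3$, so it absorbs all the $\Qf$ contributions. Consistency of order four is therefore equivalent to $I_{5,m,n}=\bo(h^6)$ for all $(m,n)\in\ind_5^1$, together with the normalisation that the $h^0$ term of $F_{i,j}$ equals $\Qf$. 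This is exactly the system in the statement.

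\textbf{Step 3: solvability and uniqueness.} I expand each $I_{5,m,n}$ in powers of $h$ and collect the coefficients of $h^0,\dots,h^5$. This gives a linear system in the unknowns $c_{r,\ell,p}$. The unknowns are solved for degree by degree: the $h^q$-equations involve $c_{r,\ell,q}$ linearly, with the lower-degree $c$'s entering as data. After imposing the listed zero constraints and $c_{0,0,0}=-10/3$, the number of free unknowns at each degree should match the number of independent equations. I would verify this by showing that, at each order $q$, the coefficient matrix restricted to the surviving unknowns is the same constant matrix built from the moments $\sum r^\mu\ell^\nu$ of the 9-point stencil (independent of $\Qa,\Qb,\Qd_i$), and that this matrix is nonsingular. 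Its $q=0$ block reproduces the classical compact stencil $\tfrac16,\tfrac23,-\tfrac{10}{3}$, consistent with \cref{prop:1}.

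\textbf{Main obstacle.} The main obstacle is this rank verification, together with the $h^{-1}$ bookkeeping from $\Qd_1$. The $h^{-1}$ terms shift lower-degree unknowns into higher-degree equations, so the triangular structure must be checked carefully. In practice I would confirm it by symbolic computation, as was done for \cref{thm:FDM:2D}. After that, the theorem follows from Step 2.
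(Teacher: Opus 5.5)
Your proposal follows the paper's route: the paper gives no separate proof of \cref{thm:FDM:parabo:2D}, since fourth-order consistency is built in through \eqref{u:GH}--\eqref{L:h:u:uh} applied to \eqref{para:simple:1}, and existence and uniqueness of the constrained $c_{r,\ell,p}$ are simply confirmed by symbolic computation (\cref{remark:thm22}), which is also where your Step 3 ends. Two corrections: the Step 1 worry is unfounded, because eliminating $u^{(m,n)}$ with $m\ge 2$ via the differentiated PDE is an exact identity at $(x_i,y_j)$, so the remainder in \eqref{u:GH} is just the Taylor remainder $\bo(h^6)$ and $\tfrac{1}{rh}$ enters only the coefficients of $G_{5,m,n}$ and $H_{5,m,n}$; and in Step 3 the $p=0$ and $p=1$ blocks are not square but $11\times 8$ and $9\times 8$ of rank $8$ (the leading part $(xy^3-x^3y)/6$ of $G_{5,1,3}$ vanishes on the $3\times 3$ stencil), so there uniqueness comes from full column rank and existence needs a compatibility condition at $p=1$ (the $h^5$ coefficient of $I_{5,1,3}$), whereas the $p=2,\dots,5$ blocks are square ($7,5,3,1$) and nonsingular as you expected.
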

	
	\begin{remark}\label{remark:thm22}
		The existence of the fourth-order compact 9-point FDM in \eqref{Lh:uh:para} is confirmed by the symbolic computation, but in \cref{thm:FDM:parabo:2D}, $C_{r,\ell}$, $F_{i,j}$, $G_{5,m,n}$, and $H_{5,m,n}$ are not explicitly given because of their long expressions. Recall that \eqref{property:1:2D}--\eqref{property:3:2D} help us to obtain the short and easy stencil in \cref{thm:FDM:2D}. But the expression of the FDM in \cref{thm:FDM:parabo:2D}
		is long and complicated, and we do not observe a similar expression for $c_{r,\ell,p}$ to solve \eqref{para:simple:1} as in \eqref{property:1:2D}--\eqref{property:3:2D}. So we only propose the above algorithm to obtain the unique FDM of  \cref{thm:FDM:parabo:2D} used in this paper. For the PDE \eqref{para:simple:1}, \cref{thm:FDM:2D} and  \cref{thm:FDM:parabo:2D}  are third-order and fourth-order consistent, respectively,
		but numerical results of \cref{thm:FDM:2D} and  \cref{thm:FDM:parabo:2D} for the 2D	linear time-dependent convection-diffusion-reaction  equation \eqref{Linear:Parabolic:2D} in \cref{Example:2} in \cref{sec:Numeri} are almost the same (see details in \cref{remark:examp:2}). I.e., the proposed compact 9-point FDM in \cref{thm:FDM:2D} with the short and explicit expression can yield accurate and convincing solution for  \eqref{Linear:Parabolic:2D}.
		So \cref{thm:FDM:2D} is the main result for all 2D convection-diffusion-reaction equations (linear, nonlinear, time-independent, time-dependent) in this paper. 
		Thus readers do not need to worry the messy derivation and undefined notations of  \cref{thm:FDM:parabo:2D} which are used to demonstrate the efficiency of \cref{thm:FDM:2D}. But we also plan to modify \cref{thm:FDM:parabo:2D} to derive the fourth-order compact 9-point FDM with the short and easy stencil to solve \eqref{para:simple:1} in future. 
	\end{remark}		
	
	Due to the long expression of $C_{r,\ell}$ in \cref{thm:FDM:parabo:2D}, we could not present the explicit formula for each $C_{r,\ell}$. But we can still establish following two properties  in \cref{prop:M:matrix:para} and \cref{remark:three:deriva}:
	\begin{proposition}\label{prop:M:matrix:para}
		The nine polynomials  $C_{r,\ell}= \sum_{p=0}^5 c_{r,\ell,p} h^p$ with $r,\ell=-1,0,1$ in \eqref{Lh:uh:para} satisfy
		\[
		\begin{split} 
			& C_{\pm 1,\pm 1}|_{h=0}=C_{\pm 1,\mp 1}|_{h=0}=\tfrac{1}{6}, \qquad C_{\pm 1,0}|_{h=0}=C_{0,\pm 1}|_{h=0}=\tfrac{2}{3},\qquad C_{0,0}|_{h=0}=-\tfrac{10}{3},\\
			& \sum_{k,\ell=-1}^1C_{k,\ell}=\sum_{q=0}^5 \varpi_qh^q, \quad \text{with} \quad 	\varpi_0=0 \quad \text{and} \quad \varpi_1=\textup \Qd_1/r,
		\end{split}
		\]
		where  $r=\tau/h>0$ ($\tau$ is the time step)
		\begin{align*}
			& \textup \Qd_1=\frac{-2}{\kappa^{n+1/2}},   && \hspace{-4cm} \text{for the CN method } \eqref{CN:eq};\\
			& \textup \Qd_1=\frac{-11}{6 \kappa^{n+3}},  && \hspace{-4cm}  \text{for the BDF3 method } \eqref{BDF3:eq};\\
			& \textup \Qd_1=\frac{-25}{12\kappa^{n+4}},  && \hspace{-4cm}  \text{for the BDF4 method } \eqref{BDF4:eq}.
		\end{align*}
		Furthermore, $-\mathcal{L}_h (u_h)_{i,j} :=\frac{1}{h^2}\sum_{r,\ell=-1}^1 -C_{r,\ell}\big|_{(x,y)=(x_i,y_j)} (u_h)_{i+r,j+\ell}=-F_{i,j}\big|_{(x,y)=(x_i,y_j)}$  in \cref{thm:FDM:parabo:2D} is the maximum principle preserving and monotone scheme, and forms an M-matrix for any $\lambda$ in \eqref{Linear:Parabolic:2D}, if $h$ is sufficiently small.
	\end{proposition}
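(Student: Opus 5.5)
The plan is to read off the $h^0$ and $h^1$ coefficients of the nine polynomials $C_{r,\ell}$ directly from the moment equations that define them in \cref{thm:FDM:parabo:2D}. After that, the sign and sum conditions \eqref{sign:and:sum} are checked exactly as in the proofs of \cref{prop:1} and \cref{propos:tau:h}. No explicit form of the long stencil is needed beyond its lowest-order terms.

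\textbf{Step 1 (the $h^0$ values).} Set $h=0$ in the equations $I_{5,m,n}=\bo(h^6)$. In \eqref{para:simple:1} the reaction coefficient $\tfrac{1}{rh}\Qd_1$ is singular in $h$. However, $G_{5,m,n}(rh,\ell h)$ are polynomials in the grid offsets, and collecting the lowest powers of $h$ gives the same constraints on $c_{r,\ell,0}$ as for the pure Laplacian. These constraints are: annihilate $1,x,y,xy,x^2-y^2,\dots$, and reproduce $\Delta$ on the harmonic-plus-correction basis. Together with the imposed zero pattern ($c_{0,0,0}=-10/3$, with $c_{1,1,p}=0$ for $p\ge1$, and so on), this forces the classical 9-point weights $\tfrac16,\tfrac23,-\tfrac{10}{3}$. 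Alternatively, I would simply note that the symbolic solution producing \cref{thm:FDM:parabo:2D} yields these values, in the same way as in \eqref{C:Left:2D}.

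\textbf{Step 2 (the row sum).} Apply the scheme to the constant function $u\equiv 1$. The moment condition for $(m,n)=(0,0)$ reads $\sum_{r,\ell}C_{r,\ell}G_{5,0,0}(rh,\ell h)=\bo(h^6)$. Here $G_{5,0,0}$ is the local solution of the homogeneous equation with value $1$ at the origin. Since the differential operator applied to it gives $0$, we have $G_{5,0,0}(x,y)=1-\tfrac{\Qd_1}{2rh}\cdot\tfrac{x^2+y^2}{2}+\dots$, up to the normalization used in \citep{FengTrenchea2026}. Evaluating at $(rh,\ell h)$ makes this correction $\bo(h)$, because $(x^2+y^2)/h$ is of order $h$. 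Expanding the condition order by order therefore gives $\varpi_0=0$ and $\varpi_1=-\sum_{r,\ell}c_{r,\ell,0}\cdot(\text{the }h^1\text{ part of }G_{5,0,0}(rh,\ell h))$. Because the $c_{r,\ell,0}$ reproduce $h^2\Delta$ on quadratics, this sum equals $h^2\cdot\tfrac{\Qd_1}{rh}=\tfrac{\Qd_1}{r}h$. Equivalently, the scheme must reproduce the $\tfrac{1}{rh}\Qd_1 u$ term at leading order, so $\varpi_1=\Qd_1/r$. This is the step I expect to be the main obstacle. The singular $1/(rh)$ coefficient shifts the usual degree bookkeeping by one power of $h$. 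I would verify it by a direct symbolic check on the lowest-order terms of the computed stencil rather than through a general argument.

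\textbf{Step 3 (the M-matrix property).} From \eqref{d1:d2:f1:f2} and $\kappa>0$, we have $\Qd_1<0$ in all three cases (CN, BDF3, BDF4), so $\varpi_1<0$. Hence $\sum_{r,\ell}-C_{r,\ell}=-\varpi_1h+\bo(h^2)>0$ for sufficiently small $h$, whatever the sign of $\lambda$. The reaction term $\Qd_2=-\lambda/\kappa$ enters only at order $h^2$, which is dominated by the $h^1$ term. By Step 1, $-C_{0,0}\to\tfrac{10}{3}>0$ and $-C_{r,\ell}\to-\tfrac16$ or $-\tfrac23<0$ for $(r,\ell)\ne(0,0)$, so the sign condition holds for small $h$. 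The conclusion (discrete maximum principle, monotonicity, and the M-matrix property) then follows from \eqref{sign:and:sum} by the same argument as in \cref{prop:1}.
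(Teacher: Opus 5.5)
Your proposal is correct. Its second half matches the paper, whose proof is only a reference to \cref{prop:1}: the sign condition comes from the $h=0$ weights, and the sum condition comes from $\varpi_1=\Qd_1/r<0$ because $\kappa>0$, with $\Qd_2=-\lambda/\kappa$ entering the row sum only at order $h^2$, hence ``for any $\lambda$''.

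The difference is in how you obtain $C_{r,\ell}|_{h=0}$, $\varpi_0=0$ and $\varpi_1=\Qd_1/r$. Following \cref{prop:1}, the paper reads them off the explicit stencil. For \cref{thm:FDM:parabo:2D} that stencil exists only as symbolic-computation output and is never displayed. You instead derive these values from the defining moment conditions. That derivation is sound, and you do not need the fallback symbolic check. Below, $r'$ denotes the stencil index, to avoid the clash with $r=\tau/h$.

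A term of $G_{5,m,n}$ carrying $k$ factors $\tfrac{1}{rh}\Qd_1$ (or derivatives of $\Qd_1$) comes from a Taylor monomial of degree at least $m+n+2k$. At the grid points it is therefore $\bo(h^{m+n+k})$. So the $h^{m+n}$ coefficient of $I_{5,m,n}$ involves only $c_{\cdot,\cdot,0}$ and harmonic polynomials.

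To make Step 1 airtight, note that the harmonic polynomials of degree $\le 5$, restricted to $\{-1,0,1\}^2$, span only the 8-dimensional space generated by $1,x,y,xy,x^2-y^2,x^2y,xy^2,x^2+y^2-6x^2y^2$. Hence the annihilator is one-dimensional. The normalization $\sum_{r',\ell}c_{r',\ell,0}(r')^2=2$ follows from $H_{5,0,0}=\tfrac{x^2}{2}+\cdots$ together with the requirement that the $h^0$ term of $F_{i,j}$ be $\Qf$. It pins down $\tfrac16,\tfrac23,-\tfrac{10}{3}$. The zero pattern on $c_{r,\ell,p}$ for $p\ge1$ plays no role here.

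For Step 2, the elimination used in \eqref{u:GH} gives $G_{5,0,0}(r'h,\ell h)=1-\tfrac{\Qd_1}{2r}(r')^2h+\bo(h^2)$. The $h^1$ part of the $(0,0)$ condition is then $\varpi_1-\tfrac{\Qd_1}{2r}\sum_{r',\ell}c_{r',\ell,0}(r')^2=0$, that is, $\varpi_1=\Qd_1/r$. Your symmetric version of $G_{5,0,0}$ differs from this one by a multiple of $x^2-y^2$, which $c_{\cdot,\cdot,0}$ annihilates, so it gives the same result. Do keep $\varpi_1$ and $h\varpi_1$ apart in the bookkeeping.

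What your route buys is verifiability from the text and an explanation. These facts hold for every 9-point stencil, polynomial in $h$, that satisfies $I_{5,m,n}=\bo(h^6)$ for \eqref{para:simple:1} with the same normalization. They do not depend on the extra side conditions singled out in \cref{thm:FDM:parabo:2D}. The paper's route is shorter but rests entirely on the unpublished symbolic stencil.
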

	\begin{proof}
		Similar to the proof of \cref{prop:1}
	\end{proof}		
	\begin{remark}\label{remark:three:deriva}
		The symbolic computation indicates that we need to calculate $ \{  \tfrac{\partial^{m+n} \varrho(x,y,t_{n+i}) }{ \partial x^m \partial y^n } : m+n\le 3 \}$ with $\varrho=\textup \Qa,\textup \Qb,\textup \Qd_1, \textup \Qd_2, \textup \Qf$ defined in \eqref{tau:f:f1:f2} and \eqref{d1:d2:f1:f2} for the FDM in \cref{thm:FDM:parabo:2D}.
	\end{remark}

	\subsection{Fourth-order  compact 9-point FDM for the nonlinear time-independent  equation in 2D}	\label{subse:non:liner:elli:2D}
	In the subsection, we consider the nonlinear version of \eqref{Linear:Elliptic:2D}. 
	For simplification, we assume that $\kappa>0$, $\alpha$, $\beta$, and $\lambda$ only depend on $u(x,y)$ and are smooth in $\Omega$. I.e., we discuss the following nonlinear time-independent  equation in $\Omega=(l_1,l_2)^2$:
	\be\label{Non:Linear:Elliptic:2D}
	\begin{cases}
		-\nab\cdot (\kappa(u) \nab u) +   \alpha(u) u_x +   \beta(u) u_y +   \lambda(u) u = \phi, \qquad \qquad  &  (x,y)\in \Omega,\\
		u =g,  & (x,y)\in \partial \Omega.
	\end{cases}
	\ee
	Clearly, \eqref{Non:Linear:Elliptic:2D} produces
	\[
	-(\kappa(u) u_x)_x-(\kappa(u) u_y)_y  + \alpha(u) u_x +   \beta(u) u_y +   \lambda(u) u = \phi, 
	\]
	\[
	-\kappa(u) \Delta u-\kappa_u(u)u_x u_x-\kappa_u(u)u_y u_y  + \alpha(u) u_x +   \beta(u) u_y +   \lambda(u) u = \phi,
	\]
	and
	\be\label{nonlinear:2D:ellip}
	\Delta u  + \frac{\kappa_u(u)u_x-\alpha(u)}{\kappa(u)} u_x +   \frac{\kappa_u(u)u_y-\beta(u)}{\kappa(u)} u_y -   \frac{\lambda(u)}{\kappa(u)} u = -\frac{\phi }{\kappa(u)}.
	\ee
	Now, we use the fixed point method (iteration method) to solve the nonlinear equation \eqref{nonlinear:2D:ellip}. We define $u_{\Qk}$ as the solution in the $\Qk$th iteration. 	
	Let
	\begin{align} \label{ak:bk:dk:fk}
		&	a_\Qk:=\frac{\kappa_u(u_\Qk)(u_\Qk)_x-\alpha(u_\Qk)}{\kappa(u_\Qk)}, &&  \hspace{-3cm} b_\Qk:=\frac{\kappa_u(u_\Qk)(u_\Qk)_y-\beta(u_\Qk)}{\kappa(u_\Qk)}, \notag \\
		& d_\Qk:=-   \frac{\lambda(u_\Qk)}{\kappa(u_\Qk)}, && \hspace{-3cm} f_\Qk:=-\frac{\phi }{\kappa(u_\Qk)}.
	\end{align}
	Then we reformulate \eqref{nonlinear:2D:ellip} as
	\be\label{nonlinear:2D:ellip:ite}
	\Delta u_{\Qk+1}  + a_\Qk (u_{\Qk+1})_x +  b_\Qk(u_{\Qk+1})_y +d_\Qk u_{\Qk+1} =f_\Qk.
	\ee
	Now, we use the fourth-order compact 9-point FDM in \cref{thm:FDM:2D} to solve \eqref{nonlinear:2D:ellip:ite} until $\Qk=40$ with the initial guess $u_{\Qk}=0$ with $\Qk=0$.

	\subsection{Fourth-order  compact 9-point FDM for the nonlinear time-dependent  equation in 2D}	\label{subse:non:liner:para:2D}
	In this section, we discuss the nonlinear version of the 2D time-dependent  equation \eqref{Linear:Parabolic:2D} as follows:
	\be\label{parabo:nonlinear:2D}
	u_t-\nab\cdot (\kappa(u) \nab u) +   \alpha(u) u_x +   \beta(u) u_y  +  \lambda(u) u = \phi.
	\ee
	For simplification, we assume that $\kappa>0$, $\alpha$, $\beta$, and $\lambda$ only depend on $u(x,y,t)$ and are smooth in $\Omega$, and define
	\be\label{akni}
	\begin{split}
		& a_\Qk^{n+i}:=\frac{\kappa_u(u^{n+i}_\Qk)(u^{n+i}_\Qk)_x-\alpha(u^{n+i}_\Qk)}{\kappa(u^{n+i}_\Qk)}, \quad b_\Qk^{n+i}:=\frac{\kappa_u(u^{n+i}_\Qk)(u^{n+i}_\Qk)_y-\beta(u^{n+i}_\Qk)}{\kappa(u^{n+i}_\Qk)},\\
		&  u_{\Qk}^{n+i} \text{ is the solution in the } \Qk\text{th iteration and  at } t=(n+i)\tau,\\
		&  u^{n+i}:=u_{\Qk}^{n+i} \text{ with } \Qk=20,
	\end{split}
	\ee
	where $\tau=T/N_2$ and $N_2\in \N$.
	From \eqref{parabo:nonlinear:2D},
	\[
	u_t-\kappa(u) \Delta u  + (\alpha(u)-\kappa_u(u)u_x) u_x +   (\beta(u)-\kappa_u(u)u_y) u_y  +   \lambda(u) u = \phi.
	\]
	Similar to \eqref{original:CN}, \eqref{CN:eq}, and \eqref{nonlinear:2D:ellip:ite}, the CN method implies
	\be\label{CN:2D:itera}
	\begin{split}
		& \Delta u_{\Qk+1}^{n+1/2}  +a_\Qk^{n+1/2} (u^{n+1/2}_{\Qk+1})_x +b_\Qk^{n+1/2}(u^{n+1/2}_{\Qk+1})_y +d_\Qk^{n+1/2} u^{n+1/2}_{\Qk+1}   = f^{n+1/2}_\Qk,\\
		& u^{n+1} =2u^{n+1/2}-u^{n}, \quad \Qk:=\Qk+1, \\
		&  \text{with the given } u^0\in \Omega \text{ and }  \ u^{n+1/2}\in \partial \Omega,\\
		&  \text{the initial guess } u^{n+1/2}_{\Qk}=u^{n} \text{ with }\Qk=0,
	\end{split}
	\ee
	where $a_\Qk^{n+1/2}$ and $b_\Qk^{n+1/2}$ are defined in \eqref{akni} with $i=1/2$, 
	\be\label{dk:CN:2D}
	d_\Qk^{n+1/2}:=-\frac{2}{\tau}\frac{1}{\kappa(u^{n+1/2}_\Qk)} -\frac{\lambda(u^{n+1/2}_\Qk)}{\kappa(u^{n+1/2}_\Qk)},
	\ee 
	and
	\be\label{fk:CN:2D}
	f^{n+1/2}_\Qk:=\frac{\phi^{n+1/2}}{-\kappa(u^{n+1/2}_\Qk)  }-\frac{2}{\tau}\frac{u^{n}}{\kappa(u^{n+1/2}_\Qk)}.
	\ee
	
	Similar to \eqref{original:BDF3}, \eqref{BDF3:eq}, and \eqref{nonlinear:2D:ellip:ite}, the BDF3 method leads to 
	\be\label{BDF3:2D:itera}
	\begin{split}
		& \Delta u_{\Qk+1}^{n+3}  +a_\Qk^{n+3} (u_{\Qk+1}^{n+3})_x +b_\Qk^{n+3} (u_{\Qk+1}^{n+3})_y  +d_\Qk^{n+3} u_{\Qk+1}^{n+3}   = f^{n+3}_\Qk,\\
		& \text{with the given } u^0\in \Omega \text{ and } u^{n+3}\in \partial \Omega, \\
		& u^1 \text{ and } u^2 \text{ are computed by the CN method in \eqref{CN:2D:itera} with } \tau=h/2,\\
		& \Qk:=\Qk+1, \quad \text{the initial guess } u^{n+3}_{\Qk}=u^{n+2} \text{ with }\Qk=0,
	\end{split}
	\ee
	where $a_\Qk^{n+3}$ and $b_\Qk^{n+3}$ are defined in \eqref{akni} with $i=3$, 
	\be\label{dk:BDF3:2D}
	d_\Qk^{n+3}:=-\frac{11}{6\tau } \frac{1}{\kappa(u^{n+3}_\Qk)}-\frac{\lambda(u^{n+3}_\Qk)}{\kappa(u^{n+3}_\Qk)},
	\ee
	and
	\be\label{fk:BDF3:2D}
	f^{n+3}_\Qk:=-\frac{\phi ^{n+3}}{\kappa (u^{n+3}_\Qk) }-\frac{1}{6 \tau }\frac{1}{\kappa(u^{n+3}_\Qk) }(18u^{n+2}-9u^{n+1}+2u^{n}).
	\ee
	
	Similar to  \eqref{original:BDF4}, \eqref{BDF4:eq} and \eqref{nonlinear:2D:ellip:ite}, the BDF4 method leads to 
	\be\label{BDF4:2D:itera}
	\begin{split}
		& \Delta u_{\Qk+1}^{n+4}  +a_\Qk^{n+4} (u_{\Qk+1}^{n+4})_x +b_\Qk^{n+4} (u_{\Qk+1}^{n+4})_y   +d_\Qk^{n+4} u_{\Qk+1}^{n+4}   = f^{n+4}_\Qk,\\
		& \text{with the given } u^0\in \Omega \text{ and } u^{n+4}\in \partial \Omega, \\
		& u^1, u^2, \text{ and } u^3 \text{ are computed by the CN method in \eqref{CN:2D:itera} with } \tau=h/2,\\
		& \Qk:=\Qk+1, \quad \text{the initial guess } u^{n+4}_{\Qk}=u^{n+3} \text{ with }\Qk=0,
	\end{split}
	\ee
	where $a_\Qk^{n+4}$ and $b_\Qk^{n+4}$ are defined in \eqref{akni} with $i=4$, 
	\be\label{dk:BDF4:2D}
	d_\Qk^{n+4}:=-\frac{25}{12\tau } \frac{1}{\kappa(u^{n+4}_\Qk)}-\frac{\lambda(u^{n+4}_\Qk)}{\kappa(u^{n+4}_\Qk)},
	\ee
	\be\label{fk:BDF4:2D}
	f^{n+4}_\Qk:=-\frac{\phi ^{n+4}}{\kappa (u^{n+4}_\Qk) }-\frac{1}{12 \tau }\frac{1}{\kappa (u^{n+4}_\Qk)}(48u^{n+3}-36u^{n+2}+16u^{n+1}-3u^{n}).
	\ee
	
	Now, we use the compact 9-point FDMs in \cref{thm:FDM:2D}  and \cref{thm:FDM:parabo:2D} to solve \eqref{CN:2D:itera}, \eqref{BDF3:2D:itera}, and \eqref{BDF4:2D:itera} until $\Qk=20$.

	\subsection{Fourth-order  compact 19-point FDM for the linear time-independent   equation in 3D}\label{subse:liner:elli:3D}
	In this subsection, we discuss the following 3D linear time-independent  equation with the Dirichlet boundary condition:
	\be\label{Model：Elliptic:3D}
	\begin{cases}
		-\nab\cdot (\kappa \nab u) +   \alpha u_x +   \beta u_y  +   \gamma u_z +   \lambda u = \phi, \qquad \qquad  &  (x,y)\in \Omega,\\
		u =g,  & (x,y)\in \partial \Omega,
	\end{cases}
	\ee
	where $\kappa=\kappa(x,y,z)>0$, $\alpha= \alpha(x,y,z)$, $\beta= \beta(x,y,z)$, $\gamma=\gamma(x,y,z)$, $\lambda= \lambda(x,y,z)$, and $\phi=\phi(x,y,z)$ are smooth variable functions in the cubic domain
	$\Omega:=(l_1,l_2)^3$. 
	Similar to \eqref{2D:linear:simple:eq}, \eqref{Model：Elliptic:3D} implies
	\be\label{simpli:PDE:3D}
	\Delta u  + a u_x +  bu_y +  cu_z +du =f,
	\ee
	where 
	\be\label{abcd:3D}
	a:=\frac{\kappa_x-\alpha}{\kappa}, \qquad b:=\frac{\kappa_y-\beta}{\kappa}, \qquad c:=\frac{\kappa_z-\gamma}{\kappa}, \qquad d:=-   \frac{\lambda}{\kappa}, \qquad f:=-\frac{\phi }{\kappa}.
	\ee
	Similar to \eqref{xiyj:2D:space}, we also use the uniform Cartesian grid to discretize the spatial  domain $\Omega=(l_1,l_2)^3$ as follows:
	\be \label{xiyj:3D:space}
	x_i:=l_1+i h, \quad y_j:=l_1+j h, \quad z_k:=l_1+k h, \quad  i,j,k=0,\ldots,N_1,  \quad \text{and} \quad h:=(l_2-l_1)/N_1,
	\ee
	where $N_1\in \N$. We also define $(u_h)_{i,j,k}$ and $u_{i,j,k}$ are the values of the numerical solution $u_h$ computed by the proposed FDM and the exact solution $u$ at the grid point $(x_i,y_j,z_k)$ in \eqref{xiyj:3D:space}, respectively.
	
	{\bf{ The algorithm to derive the fourth-order FDM:}}
	Let 	\[
	u^{(m,n,q)}:=\frac{\partial^{m+n+q}u(x,y,z)}{\partial x^m \partial y^n \partial z^q}\Big|_{(x,y,z)=(x_i,y_j,z_k)},\qquad f^{(m,n,q)}:=\frac{\partial^{m+n+q}f(x,y,z)}{\partial x^m \partial y^n \partial z^q}\Big|_{(x,y,z)=(x_i,y_j,z_k)},
	\]
	\be\label{ind:3D}
	\ind_{5}^{ 1}:=\{ (m,n,q)\in \N^3 \; : \; m=0,1, m+n+q\le 5\},\qquad 	\ind_{3}:=\{ (m,n,q)\in \N^3 \; : \;  m+n+q\le 3 \},
	\ee
	and define
	\be\label{Lh:u:3D}
	\mathcal{L}_h u_{i,j,k} :=\frac{1}{h^2}\sum_{r,\ell,s=-1}^1 C_{r,\ell,s}\Big|_{(x,y,z)=(x_i,y_j,z_k)} u_{i+r,j+\ell,k+s},
	\ee
	\be\label{FDMs:3D:u}
	\mathcal{L}_h (u_h)_{i,j,k} :=\frac{1}{h^2}\sum_{r,\ell,s=-1}^1 C_{r,\ell,s}\Big|_{(x,y,z)=(x_i,y_j,z_k)} (u_h)_{i+r,j+\ell,k+s}=F_{i,j,k}\Big|_{(x,y,z)=(x_i,y_j,z_k)},
	\ee
	\[
	C_{r,\ell,s}:=\sum_{p=0}^M c_{r,\ell,s,p} h^p,
	\]
	with
	\begin{align}\label{c:rlsp:pro:1}
		c_{r,\ell,s,p}=&\sum_{\varsigma} \sigma_{r,\ell,s,p,\varsigma} \prod_{\mu_{\varsigma,1},\nu_{\varsigma,1},\omega_{\varsigma,1},\rho_{\varsigma,1}}\left(\frac{\partial^{\mu_{\varsigma,1}+\nu_{\varsigma,1}+\omega_{\varsigma,1}} a }{\partial x^{\mu_{\varsigma,1}} \partial y^{\nu_{\varsigma,1}} \partial z^{\omega_{\varsigma,1}} }\right)^{\rho_{\varsigma,1}}  
		\prod_{\mu_{\varsigma,2},\nu_{\varsigma,2},\omega_{\varsigma,2},\rho_{\varsigma,2}}\left(\frac{\partial^{\mu_{\varsigma,2}+\nu_{\varsigma,2}+\omega_{\varsigma,2}} b }{\partial x^{\mu_{\varsigma,2}} \partial y^{\nu_{\varsigma,2}} \partial z^{\omega_{\varsigma,2}} }\right)^{\rho_{\varsigma,2}}\notag\\
		& \times \prod_{\mu_{\varsigma,3},\nu_{\varsigma,3},\omega_{\varsigma,3},\rho_{\varsigma,3}}\left(\frac{\partial^{\mu_{\varsigma,3}+\nu_{\varsigma,3}+\omega_{\varsigma,3}} c }{\partial x^{\mu_{\varsigma,3}} \partial y^{\nu_{\varsigma,3}} \partial z^{\omega_{\varsigma,3}} }\right)^{\rho_{\varsigma,3}}
		\prod_{\mu_{\varsigma,4},\nu_{\varsigma,4},\omega_{\varsigma,4},\rho_{\varsigma,4}}\left(\frac{\partial^{\mu_{\varsigma,4}+\nu_{\varsigma,4}+\omega_{\varsigma,4}} d }{\partial x^{\mu_{\varsigma,4}} \partial y^{\nu_{\varsigma,4}} \partial z^{\omega_{\varsigma,4}} }\right)^{\rho_{\varsigma,4}},
	\end{align}
	where 
	\be\label{c:rlsp:pro:2}
	\text{each }	\sigma_{r,\ell,s,p,\varsigma} \text{ is independent of } a,b,c,d,f \text{ defined in } \eqref{abcd:3D},
	\ee
	and
	\be\label{c:rlsp:pro:3}
	\begin{split}
		p=&(\mu_{\varsigma,1}+\nu_{\varsigma,1}+\omega_{\varsigma,1}+1)\rho_{\varsigma,1}+	(\mu_{\varsigma,2}+\nu_{\varsigma,2}+\omega_{\varsigma,2}+1)\rho_{\varsigma,2} \\
		&+	(\mu_{\varsigma,3}+\nu_{\varsigma,3}+\omega_{\varsigma,3}+1)\rho_{\varsigma,3}+	(\mu_{\varsigma,4}+\nu_{\varsigma,4}+\omega_{\varsigma,4}+2)\rho_{\varsigma,4},
	\end{split}
	\ee
	for each $\varsigma$.
	Similar to \eqref{Fij} with \eqref{Imn}, we define
	\[
	F_{i,j,k}:=\text{the terms of } \Big(h^{-2} \sum_{(m,n) \in \ind_{	 3}} f^{(m,n,q)} J_{3,m,n,q}\Big) \text{ with degree}   \le 3 \text{ in } h,
	\]
	where
	\be\label{J3mnp} 
	\begin{split}
		J_{3,m,n,q}:= \sum_{r,\ell,s=-1}^{1} C_{r,\ell,s} \big|_{(x,y,z)=(x_i,y_j,z_k)} H_{5,m,n,q}  (rh, \ell h, sh).
	\end{split}
	\ee
	
	Similar to \eqref{EQ:1:explicit}--\eqref{property:3:2D}, the $C_{r,\ell,s}$ of the fourth-order compact 27-point FDM in \eqref{FDMs:3D:u} for \eqref{simpli:PDE:3D} can be obtained by computing $\sigma_{r,\ell,s,p,\varsigma}$ by solving 
	\be\label{G5mnp} 
	\sum_{r,\ell,s=-1}^{1} G_{5,m,n,q}  (rh, \ell h, sh) \sum_{p=0}^5 c_{r,\ell,s,p} \big|_{(x,y,z)=(x_i,y_j,z_k)} h^p =\bo(h^{6}) \quad \mbox{for all}  \quad (m,n,q)\in \ind_{5}^1, 
	\ee
	with also satisfying \eqref{c:rlsp:pro:1}--\eqref{c:rlsp:pro:3} and
	\[
	C_{0,0,0}|_{(x,y,z,h)=(x_i,y_j,z_k,0)}=c_{0,0,0,0}|_{(x,y,z)=(x_i,y_j,z_k)}\ne 0,  \qquad  F_{i,j,k}|_{(x,y,z,h)=(x_i,y_j,z_k,0)}=f(x_i,y_j,z_k).
	\]
	
	Apply \citep[eqs. (3)--(36)]{FengTrenchea2026} with replacing \eqref{conve:diff:eq} by \eqref{simpli:PDE:3D}, then the corresponding $G_{5,m,n,q}(x,y,z)$  in \eqref{G5mnp} and $H_{5,m,n,q}(x,y,z)$ in \eqref{J3mnp}  for \eqref{simpli:PDE:3D} can be derived uniquely. Similar to \cref{subse:liner:elli:2D}, expressions of $G_{5,m,n,q}(x,y,z)$ and $H_{5,m,n,q}(x,y,z)$ are not given explicitly, while the main result \cref{thm:FDM:3D}  for all 3D convection-diffusion-reaction equations (linear, nonlinear, time-independent, and time-dependent) in this paper does need explicit formulas of $G_{5,m,n,q}(x,y,z)$ and $H_{5,m,n,q}(x,y,z)$ (see \cref{thm:FDM:3D} with \eqref{C:1:Left:3D}--\eqref{F:Right:3D} for details).
	
	By the symbolic computation, we have the following $C_{r,\ell,s}$ and $F_{i,j,k}$ of the fourth-order compact 27-point FDM in  \eqref{FDMs:3D:u} for \eqref{simpli:PDE:3D}:\\
	$C_{r,\ell,s}$ with $r=-1$, $\ell,s\in \{-1,0,1\}$:
	\begin{align}\label{C:1:Left:3D}
		&C_{-1,-1,-1}=   C_{-1,-1,1}=  C_{-1,1,-1}= C_{-1,1,1}:=0, \notag \\	&C_{-1,-1,0}:=  \tfrac{1}{6}-\tfrac{\varphi_1}{12}h-\tfrac{aa_x}{24}h^3-\tfrac{1}{12}[ad_x+cd_z]h^4, \quad 
		C_{-1,0,-1}:=  \tfrac{1}{6}-\tfrac{\varphi_2}{12}h,  \notag \\
		&C_{-1,0,0}:= \tfrac{1}{3}-\tfrac{a}{6}h+\tfrac{1}{12}[a \varphi_7+d+[\varphi_7]_x+\varphi_9]h^2-\tfrac{1}{12}[d \varphi_1+b[\varphi_7]_y ]h^3 +\tfrac{cd_z}{12}h^4,\\
		&C_{-1,0,1}:=\tfrac{1}{6}-\tfrac{\varphi_5}{12}h-\tfrac{\varphi_{20}}{12}h^2+\tfrac{1}{24}[ ad +b(c_y+d+[\varphi_7]_y)-a_zc-2d_x-\Delta a ]h^3, \notag \\
		&C_{-1,1,0}:=\tfrac{1}{6}-\tfrac{\varphi_4}{12}h-\tfrac{\varphi_{19}}{12}h^2+ \tfrac{b}{24}[b_y+d]h^3; \notag 
	\end{align}
	$C_{r,\ell,s}$ with $r=0$, $\ell,s\in \{-1,0,1\}$:
	\begin{align}\label{C:2:Left:3D}
		&C_{0,-1,-1}:=\tfrac{1}{6}-\tfrac{\varphi_3}{12}h, \notag \\
		&C_{0,-1,0}:=\tfrac{1}{3}-\tfrac{b}{6}h+\tfrac{1}{12}[b \varphi_7+d+[\varphi_7]_y+\varphi_{10}]h^2 \notag \\
		&\qquad \qquad -\tfrac{1}{12}[2\varphi_{14}+\varphi_{16}+\varphi_{18} ]h^3 +\tfrac{1}{12}[ad_x+\varphi_{15}]h^4, \notag \\
		&C_{0,-1,1}:=\tfrac{1}{6}-\tfrac{\varphi_6}{12}h-\tfrac{\varphi_{21}}{12}h^2 \notag \\
		&\qquad \qquad+\tfrac{1}{24}[a(a_x+[\varphi_4]_x)-cb_z-2d_{y}-b_{zz}+4\varphi_{14} +\varphi_{16}+2\varphi_{18}]h^3, \notag \\
		&C_{0,0,-1}:= \tfrac{1}{3}-\tfrac{c}{6}h+\tfrac{1}{12}[ c \varphi_7+d+[\varphi_7]_z+\varphi_{11} ]h^2-\tfrac{bd_y}{12}h^4, \notag \\
		&C_{0,0,0}:=-4-\tfrac{1}{6}[ a \varphi_7+b \varphi_3+c^2-3d+\varphi_9+\varphi_{10}+\varphi_{11}+\varphi_{12} ]h^2 \notag \\
		&\qquad \qquad+\tfrac{1}{12}[  a \varphi_{13} +b[\varphi_7]_y-c[\varphi_7]_z+d \varphi_8 -b_{zz} +2\varphi_{14} +\varphi_{16}+\varphi_{18}    ]h^3,\\
		&C_{0,0,1}:=  \tfrac{1}{3}+\tfrac{c}{6}h+\tfrac{1}{12}[  c \varphi_7 +d+[\varphi_7]_z+\varphi_{11}  ]h^2 +\tfrac{1}{12}[  c[\varphi_7]_z-a[\varphi_4+\varphi_5]_x\notag \\
		&\qquad \qquad-b[\varphi_7]_y -d \varphi_8+b_{zz}  -2\varphi_{14}-\varphi_{16}-\varphi_{18}   ]h^3+\tfrac{d_{xx}}{12}h^4, \notag \\
		&C_{0,1,-1}:=\tfrac{1}{6}+\tfrac{\varphi_6}{12}h-\tfrac{\varphi_{21}}{12}h^2-\tfrac{1}{24}[  c([\varphi_2]_z+d)+ac_x+2(d_x+d_z) -b_{zz}+\varphi_{18} ]h^3 \notag \\
		&\qquad \qquad +\tfrac{1}{12} [   ad_x+bd_y+ \Delta d   ]h^4, \notag \\
		&C_{0,1,0}:=\tfrac{1}{3}+\tfrac{b}{6}h+\tfrac{1}{12}[ b \varphi_7+d+\varphi_{10}+[\varphi_7]_y  ]h^2+\tfrac{1}{12}[ c([\varphi_7]_z+d)-a \varphi_{13}  ]h^3 -\tfrac{d_{xx}}{12} h^4, \notag \\
		&C_{0,1,1}:=\tfrac{1}{6}+\tfrac{\varphi_3}{12}h-\tfrac{1}{24}[ c([\varphi_7]_z+d)-a(a_x+\varphi_{13})-2\varphi_{14} -\varphi_{16} - \varphi_{18} ]h^3; \notag
	\end{align}
	$C_{r,\ell,s}$ with $r=1$, $\ell,s\in \{-1,0,1\}$:
	\begin{align}\label{C:3:Left:3D}
		&C_{1,-1,-1}=	C_{1,-1,1}=  C_{1,1,-1}=  C_{1,1,1}:=0, \notag \\	
		&C_{1,-1,0}:= \tfrac{1}{6}+\tfrac{\varphi_4}{12}h-\tfrac{\varphi_{19}}{12}h^2-\tfrac{1}{24}[ a_x a + b(b_y+d) ]h^3  +\tfrac{\varphi_{17}}{12}h^4,\notag \\
		&C_{1,0,-1}:=\tfrac{1}{6}+\tfrac{\varphi_5}{12}h-\tfrac{\varphi_{20}}{12}h^2+\tfrac{1}{24}[ a_zc-bc_y+2d_x-b_{zz}+  \Delta a ]h^3 \notag \\
		&\qquad \qquad  -\tfrac{1}{12}[ ad_x+\varphi_{17} ]h^4,\\
		&C_{1,0,0}:= \tfrac{1}{3}+\tfrac{a}{6}h+\tfrac{1}{12}[ a \varphi_7+d+[\varphi_7]_x +\varphi_9 ]h^2+\tfrac{1}{12}[aa_x+b_{zz}]h^3, \notag \\
		&C_{1,0,1}:=\tfrac{1}{6}+\tfrac{\varphi_2}{12}h+\tfrac{1}{24}[d \varphi_1 +b[\varphi_7]_y-b_{zz}]h^3,\quad 	C_{1,1,0}=\tfrac{1}{6}+\tfrac{\varphi_1}{12}h; \notag 
	\end{align}
	where
	\begin{align}\label{r:Left:3D}
		& \varphi_1:=a+b,\quad \varphi_2:=a+c, \quad \varphi_3:=b+c, \notag  \\
		& \varphi_4:=a-b, \quad \varphi_5:=a-c, \quad \varphi_6:=b-c, \notag \\
		& \varphi_7:=a+b+c, \quad \varphi_8:=a+b-c, \notag \\
		& \varphi_9:=a_x+a_y+a_z,\quad \varphi_{10}:=b_x+b_y+b_z, \notag \\
		&\varphi_{11}:=c_x+c_y+c_z,\quad  \varphi_{12}:=a_x+b_y+c_z, \\
		&\varphi_{13}:=a_x-b_x-c_x, \quad  \varphi_{14}:=d_x+d_y+d_z, \notag \\
		& \varphi_{15}:=ad_x+bd_y+cd_z, \quad  \varphi_{16}:= b_{xx}+b_{yy},   \notag \\
		& \varphi_{17}:= d_{yy}+d_{zz}, \quad \varphi_{18}:=\Delta a+\Delta c, \notag \\
		& \varphi_{19}:=ab+a_y+b_x, \quad \varphi_{20}:=ac+a_z+c_x, \notag \\
		& \varphi_{21}:=bc+b_z+c_y; \notag 
	\end{align}	
	and the right-hand side of the FDM in  \eqref{FDMs:3D:u} for \eqref{simpli:PDE:3D} is defined as
	\be\label{F:Right:3D}
	F_{i,j,k}:=f +\tfrac{1}{12}[ af_x+bf_y+cf_z+\Delta f  ]h^2.
	\ee
	
	In summary, we propose the fourth-order compact 27-point (it is actually a 19-point FDM, see \cref{rem:only:19:point}) FDM  for \eqref{Model：Elliptic:3D} and \eqref{simpli:PDE:3D} in the following theorem.
	\begin{theorem}\label{thm:FDM:3D}
		Let $\kappa>0,\alpha,\beta,\gamma, \lambda, \phi$ be smooth in $\overline{\Omega}$ in \eqref{Model：Elliptic:3D}, 
		where $C_{r,\ell,s}$ with $r,\ell,s\in \{-1,0,1\}$ are defined in \eqref{C:1:Left:3D}--\eqref{r:Left:3D}, and $F_{i,j,k}$ is defined in \eqref{F:Right:3D}. 
		Then
		\[
		\mathcal{L}_h (u_h)_{i,j,k}-\mathcal{L}_h (u)_{i,j,k}=\bo(h^4),  \quad  (x_i,y_j,z_k)\in \Omega,
		\]	
		where $\mathcal{L}_h u_{i,j,k}$ and $\mathcal{L}_h (u_h)_{i,j,k}$ are defined in \eqref{Lh:u:3D} and \eqref{FDMs:3D:u}, respectively. I.e., $\mathcal{L}_h (u_h)_{i,j,k}$ in \eqref{FDMs:3D:u} achieves a  consistency order four for 3D convection-diffusion-reaction equations \eqref{Model：Elliptic:3D} and \eqref{simpli:PDE:3D} at $(x_i,y_j,z_k)$ in \eqref{xiyj:3D:space}.
	\end{theorem}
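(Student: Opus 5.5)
We mirror the proof of \cref{thm:FDM:2D}: a direct Taylor-expansion check of the explicit stencil \eqref{C:1:Left:3D}--\eqref{r:Left:3D}, with no need for $G_{5,m,n,q}$ or $H_{5,m,n,q}$. Fix an interior grid point $(x_i,y_j,z_k)$. Write the 3D analogue of \eqref{taylor:basis:2D} with $M=4$ and $(x,y,z)=(rh,\ell h,sh)$:
\[
u(x_i+rh,y_j+\ell h,z_k+sh)=\sum_{m+n+q\le 5}u^{(m,n,q)}\frac{(rh)^m(\ell h)^n(sh)^q}{m!\,n!\,q!}+\zeta_{i,j,k,r,\ell,s}h^6 ,
\]
where $\zeta$ is bounded by the sixth derivatives of $u$. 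Substituting this and $C_{r,\ell,s}=\sum_{p\le 4}c_{r,\ell,s,p}h^p$ into \eqref{Lh:u:3D} gives
\[
\mathcal{L}_h u_{i,j,k}=\sum_{p=-2}^{3}\theta_p h^p+\bo(h^4),
\]
where $\theta_p$ collects all products $c_{r,\ell,s,p'}\,r^m\ell^n s^q\,u^{(m,n,q)}/(m!n!q!)$ with $p'+m+n+q-2=p$. The eight corner coefficients vanish identically, so only the 19 points $(r,\ell,s)$ with $|r|+|\ell|+|s|\le 2$ contribute. This bookkeeping can be organized with the moment sums $\sum_{r,\ell,s}c_{r,\ell,s,p'}r^m\ell^n s^q$, computed once per $p'$.

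The claims to establish, in order, are:
\begin{enumerate}
\item[(i)] $\theta_{-2}=0$. This is the sum condition on the $h^0$ parts: $6\cdot\tfrac13+12\cdot\tfrac16-4=0$.
\item[(ii)] $\theta_{-1}=0$. This requires the first moments of $c_{\cdot,0}$ and the zeroth moment of $c_{\cdot,1}$ to vanish. It follows from the symmetry of the $h^0$ weights and the antisymmetry of the $h^1$ weights (for example $\pm\tfrac{a}{6}$ and $\pm\tfrac{\varphi_4}{12}$).
\item[(iii)] $\theta_0=\Delta u+au_x+bu_y+cu_z+du=\mathcal{L}u$. Here the $h^0$ weights reproduce $\Delta u$ through their second moments: $2\cdot\tfrac13+4\cdot\tfrac16=\tfrac43$ in each direction, with the $\tfrac13$ factor compensating the $1/2!$ in the Taylor coefficients.
\item[(iv)] $\theta_1=\theta_3=0$.
\item[(v)] $\theta_2=\tfrac1{12}\big\{a[\mathcal{L}u]_x+b[\mathcal{L}u]_y+c[\mathcal{L}u]_z+\Delta[\mathcal{L}u]\big\}$.
\end{enumerate}
For (v), expand the right-hand side with the product rule. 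It is a linear combination of the derivatives $u^{(m,n,q)}$ with $m+n+q\le4$, and we compare it coefficient by coefficient with $\theta_2$.

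Once these identities hold, $\mathcal{L}u=f$ gives
\[
\mathcal{L}_h u_{i,j,k}=f+\tfrac{h^2}{12}\big[af_x+bf_y+cf_z+\Delta f\big]+\bo(h^4)
\]
at $(x_i,y_j,z_k)$. By \eqref{FDMs:3D:u} and \eqref{F:Right:3D}, $\mathcal{L}_h(u_h)_{i,j,k}$ equals exactly the same expression without the remainder. Subtracting the two proves the theorem, exactly as \eqref{L:h:u:3}--\eqref{L:h:u:h:2} prove \cref{thm:FDM:2D}.

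The main obstacle is verifying (iv) and (v). Each requires matching the coefficients of all the $u^{(m,n,q)}$ (up to $m+n+q=5$ for $\theta_3$) across 19 stencil points with the asymmetric $h^3$ and $h^4$ corrections. Examples are the terms $\tfrac{cd_z}{12}h^4$, $\tfrac{d_{xx}}{12}h^4$, and $\tfrac{\varphi_{17}}{12}h^4$, which must cancel among themselves in the zeroth moment and combine to the required $u$-coefficient $\tfrac1{12}(\Delta d+ad_x+bd_y+cd_z)$ in $\theta_2$.

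To control the bookkeeping, we group the contributions by the degree restriction \eqref{c:rlsp:pro:3}, which splits the check by weight. A term of weight $p'$ in $c_{\cdot,p'}$ pairs only with derivatives of order $p+2-p'$. Consequently $\theta_1$ and $\theta_3$ involve only odd total weight, and they vanish by the odd-moment antisymmetry together with explicit cancellation of the $h^3$ terms. We would carry out this matching by symbolic computation, just as the paper does for $\theta_2$ in 2D. By hand, we would check representative coefficients: $u_{xxxx}$, $u_{xxyy}$, $u_{xy}$, and the coefficient of $u$ (which needs $\tfrac1{12}\Delta d+\dots$ from the zeroth $h^4$ moment plus the $d$-terms at $h^2$).
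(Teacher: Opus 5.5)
Your proposal is correct and takes the same route as the paper. The paper's proof simply asserts, by analogy with the 2D argument, that Taylor-expanding the explicit 19-point stencil gives $\mathcal{L}_h u_{i,j,k}=f+\tfrac{h^2}{12}[af_x+bf_y+cf_z+\Delta f]+\mathcal{O}(h^4)$, and then subtracts the scheme's right-hand side. Your moment-by-moment treatment of $\theta_{-2},\dots,\theta_3$ is a more explicit version of that check. The cancellations it needs beyond parity do hold for the printed coefficients: the zeroth and second moments of the $h^3$ part vanish even though that part is not antisymmetric, and the first moments of the $h^4$ part vanish. One small slip is in (iii): the $x$-direction second moment of the $h^0$ weights is $2\cdot\tfrac13+8\cdot\tfrac16=2$, because eight edge neighbours have $r\neq 0$, and it is the factor $1/2!$ that turns this into the unit coefficient of $u_{xx}$.
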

	\begin{proof}
		Similar to \eqref{taylor:basis:2D}--\eqref{L:h:u:3}, $\mathcal{L}_h (u)_{i,j,k}$ in \eqref{Lh:u:3D} satisfies 
		\[
		\mathcal{L}_h (u)_{i,j,k}=f\big|_{(x,y,z)=(x_i,y_j,z_k)}+\tfrac{h^2}{12}[af_x+bf_y+cf_z+ \Delta f]\big|_{(x,y,z)=(x_i,y_j,z_k)}+\bo(h^4).
		\]
		Then the proof is done by computing $\mathcal{L}_h (u_h)_{i,j,k}-\mathcal{L}_h (u)_{i,j,k}$ with $\mathcal{L}_h (u_h)_{i,j,k}$ defined in \eqref{FDMs:3D:u} and \eqref{C:1:Left:3D}--\eqref{F:Right:3D}.
	\end{proof}
	
	\begin{remark}\label{rem:only:19:point}
		We define 27 values of  $C_{r,\ell,s}$ with $r,\ell,s\in \{-1,0,1\}$ in \eqref{C:1:Left:3D}--\eqref{r:Left:3D} for the fourth-order compact FDM in \cref{thm:FDM:3D}, but we have
		\[
		C_{-1,-1,-1}= C_{-1,-1,1}=  C_{-1,1,-1}= C_{-1,1,1}=C_{1,-1,-1}=	C_{1,-1,1}=  C_{1,1,-1}=  C_{1,1,1}=0.
		\]
		Actually, we derive the fourth-order compact 19-point FDM in \cref{thm:FDM:3D}. 	Form \eqref{C:1:Left:3D}--\eqref{r:Left:3D}, we only need to compute $\{  \tfrac{\partial^{m+n+q} \varrho(x,y,z) }{ \partial x^m \partial y^n  \partial z^q} : m+n+q\le 2  \}$ for $\varrho=a,b,c,d,f$ defined in \eqref{abcd:3D} to achieve the consistency order four. Furthermore, \citep{Zhang1998} derived the fourth-order compact 19-point FDM for \eqref{simpli:PDE:3D} with $d=0$ employing a different approach from that used in this paper. 
	\end{remark}
	
	Similar to \cref{prop:1}, we have the following proposition:
	\begin{proposition}
		The 27 polynomials $\{ C_{r,\ell,s} \}_{r,\ell,s=-1,0,1}$ of the variable $h$ in \cref{thm:FDM:3D} satisfy that
		\[
		\begin{split} 
			& 	C_{\pm 1,\pm 1,\pm 1}= C_{\mp 1,\pm 1,\pm 1}= C_{\pm1,\mp 1,\pm 1} =C_{\pm1,\pm 1,\mp 1}=0,\\
			& C_{0,\pm 1,\pm 1}|_{h=0}=C_{0,\pm 1,\mp 1}|_{h=0}=C_{\pm 1,0,\pm 1}|_{h=0}=C_{\pm 1,0,\mp 1}|_{h=0}=C_{\pm 1,\pm 1,0}|_{h=0}=C_{\pm 1,\mp 1,0}|_{h=0}=\tfrac{1}{6}, \\
			&  C_{0,0,\pm 1}|_{h=0}=C_{0,\pm 1,0}|_{h=0}=C_{\pm 1,0,0}|_{h=0}=\tfrac{1}{3}, \qquad  C_{0,0,0}|_{h=0}=-4,\\
			& \sum_{r,\ell,s=-1}^1C_{r,\ell,s}=\sum_{q=0}^4 \varpi_qh^q, \quad \text{with} \quad 	\varpi_0=\varpi_1=0, \qquad \varpi_2=d=-\lambda  / \kappa,\\
			& \sum_{r,\ell,s=-1}^1C_{r,\ell,s}=0 \quad \text{if} \quad  \lambda=0.
		\end{split}
		\]
		Furthermore, $-\mathcal{L}_h (u_h)_{i,j,k} :=\frac{1}{h^2}\sum_{r,\ell,s=-1}^1 -C_{r,\ell,s}\big|_{(x,y,z)=(x_i,y_j,z_k)} (u_h)_{i+r,j+\ell,k+s}=-F_{i,j,k}\big|_{(x,y,z)=(x_i,y_j,z_k)}$ in \cref{thm:FDM:3D} is the   maximum principle preserving and monotone scheme, and forms an M-matrix for the sufficiently small $h$, if $\lambda \ge  0$ in $\Omega$ in \eqref{Model：Elliptic:3D}.
	\end{proposition}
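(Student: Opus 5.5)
The plan follows the proof of \cref{prop:1}, with two parts: first establish the algebraic identities for the 27 polynomials $C_{r,\ell,s}$, then deduce the sign and sum conditions \eqref{sign:and:sum} in 3D for sufficiently small $h$.

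\textbf{Step 1 (corner coefficients and $h=0$ values).} The eight corner coefficients $C_{\pm1,\pm1,\pm1}$ (all sign combinations) vanish identically by the first lines of \eqref{C:1:Left:3D} and \eqref{C:3:Left:3D}; this is also recorded in \cref{rem:only:19:point}. Setting $h=0$ in \eqref{C:1:Left:3D}--\eqref{C:3:Left:3D} gives the remaining values directly: $\tfrac16$ for the twelve edge neighbours, $\tfrac13$ for the six face neighbours, and $-4$ for the centre. These already sum to zero, since $12\cdot\tfrac16+6\cdot\tfrac13-4=0$, so $\varpi_0=0$.

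\textbf{Step 2 (the $h^1$ and $h^2$ coefficients).} For the $h^1$ coefficients, I will pair the stencils symmetrically. The face pairs contribute $\mp\tfrac{a}{6},\mp\tfrac{b}{6},\mp\tfrac{c}{6}$, which cancel. The edge pairs contribute $\mp\varphi_1,\dots,\mp\varphi_6$, each divided by $12$; for example $C_{-1,-1,0}$ and $C_{1,1,0}$ give $\mp\varphi_1/12$, and $C_{0,-1,1}$ and $C_{0,1,-1}$ give $\mp\varphi_6/12$. So $\varpi_1=0$.

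For $\varpi_2$, I sum the $h^2$ terms, using the definitions \eqref{r:Left:3D} of $\varphi_7,\dots,\varphi_{12},\varphi_{19},\varphi_{20},\varphi_{21}$:
\begin{itemize}
\item The six face terms contribute $\tfrac{1}{6}[a\varphi_7+b\varphi_7+c\varphi_7+3d+[\varphi_7]_x+[\varphi_7]_y+[\varphi_7]_z+\varphi_9+\varphi_{10}+\varphi_{11}]$.
\item The edge terms contribute $-\tfrac{1}{6}(\varphi_{19}+\varphi_{20}+\varphi_{21})$, since each of $\varphi_{19},\varphi_{20},\varphi_{21}$ appears twice with coefficient $-\tfrac1{12}$.
\item The centre contributes $-\tfrac16[a\varphi_7+b\varphi_3+c^2-3d+\varphi_9+\varphi_{10}+\varphi_{11}+\varphi_{12}]$.
\end{itemize}
Expanding, the products $ab,ac,bc$ cancel against those inside $\varphi_{19},\varphi_{20},\varphi_{21}$. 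The first derivatives cancel via $[\varphi_7]_x+[\varphi_7]_y+[\varphi_7]_z=\varphi_9+\varphi_{10}+\varphi_{11}$, together with $\varphi_{12}$ and the cross derivatives in $\varphi_{19},\varphi_{20},\varphi_{21}$. What should remain is $\tfrac16(3d+3d)=d$.

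This bookkeeping is routine but error-prone, and I expect it to be the main obstacle. I would organize it by monomial type: squares $a^2,b^2,c^2$, then mixed products, then first derivatives of $a,b,c$, then $d$. The higher coefficients $\varpi_3,\varpi_4$ need not vanish and do not matter. If $\lambda=0$ then $d=0$, and every term of $c_{r,\ell,s,p}$ with $p\ge3$ that involves $d$ drops out, by \eqref{c:rlsp:pro:3}. The remaining $h^3$ and $h^4$ pieces must then cancel pairwise: for instance $b_{zz}$, $aa_x$ and the $\varphi_{14},\varphi_{16},\varphi_{18}$ terms, which I would check in the same grouped fashion. This confirms $\sum C_{r,\ell,s}=0$. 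Alternatively, this identity follows from consistency, since applying the scheme to $u\equiv1$ with $d=0$ gives $\mathcal{L}u=0=f$ and $F_{i,j,k}=0$, so $h^{-2}\sum C_{r,\ell,s}=\bo(h^4)$ by \cref{thm:FDM:3D}. Since $\sum C_{r,\ell,s}$ is a polynomial in $h$ of degree at most $4$, this forces the whole sum to vanish.

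\textbf{Step 3 (M-matrix and maximum principle).} By Step 1 and continuity in $h$, for sufficiently small $h$ the scheme satisfies $-C_{0,0,0}\approx4>0$. The nonzero off-diagonal entries are $-C_{r,\ell,s}\approx-\tfrac16$ or $-\tfrac13$, hence negative, and the vanishing corners give zero entries, so the sign condition in \eqref{sign:and:sum} holds. The row sum is $-\sum C_{r,\ell,s}=-dh^2+\bo(h^3)=\tfrac{\lambda}{\kappa}h^2+\bo(h^3)$. If $\lambda>0$ this is positive for small $h$. If $\lambda=0$ it vanishes exactly by Step 2.

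In the general case $\lambda\ge0$ I would argue as in \cref{prop:1}, which this statement mirrors. The rows adjacent to $\partial\Omega$ give strictly positive sums once the Dirichlet data are moved to the right-hand side. Then the discrete maximum principle, monotonicity and the M-matrix property follow from the cited criteria exactly as in the proof of \cref{prop:1}.
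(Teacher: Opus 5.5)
Your proposal is correct and follows essentially the paper's route; the paper only points to the proof of \cref{prop:1}. In both, the identities are read off from the explicit stencils \eqref{C:1:Left:3D}--\eqref{r:Left:3D}, and the sign and sum conditions \eqref{sign:and:sum} then give the M-matrix and maximum-principle claims. Your pairing for $\varpi_1$ and bookkeeping for $\varpi_2=d$ check out, and your $u\equiv1$ consistency shortcut for $\lambda=0$ is valid (in fact $\varpi_3\equiv0$ and $\varpi_4=\tfrac{1}{12}[ad_x+bd_y+cd_z+\Delta d]$).

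One soft spot, which you share with the paper, is a nonnegative $\lambda$ that vanishes somewhere without vanishing identically. Near such points $\tfrac{\lambda}{\kappa}h^2$ need not dominate $-\varpi_4h^4$, so the leading-order argument does not actually settle the row-sum condition there.
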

	
	\subsection{Fourth-order  compact 19-point FDM for the linear time-dependent   equation in 3D}	\label{subse:liner:para:3D}	
	In this subsection, we discuss the following 3D	linear time-dependent  equation in the temporal domain $I:=[0,T]$ and the spatial domain $\Omega:=(l_1,l_2)^3$: 
	\be\label{Linear:Parabolic:3D}
	\begin{cases}
		u_t-\nab\cdot (\kappa \nab u) +   \alpha u_x +   \beta u_y +\gamma u_z +  \lambda u = \phi, \qquad \qquad  &  (x,y)\in \Omega \quad \hspace{1.8mm} \text{and} \quad  t\in I,\\
		u =g,  & (x,y)\in \partial \Omega \quad \text{and} \quad  t\in I,\\
		u=u^0,  & (x,y)\in  \Omega \quad \hspace{1.8mm}  \text{and} \quad t=0,
	\end{cases}
	\ee
	where $\kappa=\kappa(x,y,z,t)>0$, $\alpha= \alpha(x,y,z,t)$, $\beta= \beta(x,y,z,t)$, $\gamma= \gamma(x,y,z,t)$,  $\lambda= \lambda(x,y,z,t)$, and $\phi=\phi(x,y,z,t)$ are smooth variable functions in $\Omega$ and $I$. We use the uniform mesh for the time discretization in \eqref{tim:discre} and spatial discretization in \eqref{xiyj:3D:space}. For simplification, we define
	\be \label{nota:cni}
	c^{n+i}:=\frac{\kappa^{n+i}_z-\gamma^{n+i}}{\kappa^{n+i}}.
	\ee 
	
	Similar to \eqref{CN:eq}, \eqref{BDF3:eq}, and \eqref{BDF4:eq},  the CN, BDF3, and BDF4 methods yield
	\be\label{CN:eq:3D}
	\begin{split}
		& \Delta u^{n+i}  +a^{n+i} u^{n+i}_x +b^{n+i} u^{n+i}_y +c^{n+i} u^{n+i}_z +d^{n+i} u^{n+i}   = f^{n+i},\\
		& \text{CN} \quad i=1/2, \qquad \text{BDF3} \quad i=3,  \qquad \text{BDF4} \quad i=4, 
	\end{split}
	\ee
	where $a^{n+1/2}$, $b^{n+1/2}$, and  $c^{n+1/2}$  are defined in \eqref{ani:bni} and \eqref{nota:cni} with $i=1/2$, $d^{n+1/2}$ and $f^{n+1/2}$ are defined in \eqref{nota:dn:half}; $a^{n+3}$, $b^{n+3}$, and  $c^{n+3}$  are defined in \eqref{ani:bni} and \eqref{nota:cni} with $i=3$, $d^{n+3}$ and $f^{n+3}$ are defined in \eqref{nota:dn3}; $a^{n+4}$, $b^{n+4}$, and  $c^{n+4}$  are defined in \eqref{ani:bni} and \eqref{nota:cni} with $i=4$, $d^{n+4}$ and $f^{n+4}$ are defined in \eqref{nota:dn4} and \eqref{nota:BDF4:f}, respectively. Then we can use the compact 19-point FDM in \cref{thm:FDM:3D} to solve \eqref{CN:eq:3D}. 
	
	Similar to \cref{propos:tau:h}, we have the following proposition of the compact 19-point FDM in \cref{thm:FDM:3D} solving \eqref{CN:eq:3D}.
		\begin{proposition}
		The FDM $-\mathcal{L}_h (u_h)_{i,j,k}$ in \cref{thm:FDM:3D} to solve \eqref{CN:eq:3D} is the maximum principle preserving and monotone scheme, and forms an M-matrix for the sufficiently small $h$, if $\lambda\ge 0$ in $\Omega \cup I$ in \eqref{Linear:Parabolic:3D}. If $\tau$ and $h$ are both sufficiently small, then discrete maximum principle, monotone, and M-matrix properties can be guaranteed for any $\lambda$.
	\end{proposition}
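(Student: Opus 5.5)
The plan is to follow the proof of \cref{propos:tau:h} step by step, using the 3D counterpart of \cref{prop:1} (the unnumbered proposition after \cref{rem:only:19:point}) in place of \cref{prop:1}. All three semi-discrete problems in \eqref{CN:eq:3D} have exactly the form \eqref{simpli:PDE:3D}. For CN, BDF3 and BDF4 the coefficients $a,b,c$ are $a^{n+i},b^{n+i},c^{n+i}$, and $d=d^{n+i}$ is given by \eqref{nota:dn:half}, \eqref{nota:dn3}, \eqref{nota:dn4}. Consequently, the stencil of \cref{thm:FDM:3D} applied at time level $n+i$ is built from these coefficients, and the structural identities of that proposition hold verbatim.

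\textbf{Step 1 (sign condition).} From the $h=0$ values, $-C_{0,0,0}|_{h=0}=4>0$. For the twelve edge neighbours $-C|_{h=0}=-\tfrac16<0$, and for the six face neighbours $-C|_{h=0}=-\tfrac13<0$. The eight corner coefficients vanish identically, so they are trivially $\le 0$. Each $C_{r,\ell,s}$ is a polynomial in $h$ whose coefficients are bounded, since they are smooth functions of $a,b,c,d$ on $\overline\Omega$. Hence for $h$ small enough the strict signs persist for all nonzero entries. At this point I must be careful with $d=d^{n+i}$, which contains $1/\tau$. I will treat $\tau$ as fixed while $h\to0$ in the first claim. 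In the second claim I will require the smallness of $h$ to be relative to $\tau$, or simply note that the statement only asks for ``$\tau$ and $h$ both sufficiently small''.

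\textbf{Step 2 (sum condition).} The 3D proposition gives $\sum_{r,\ell,s}-C_{r,\ell,s}=-\varpi_2h^2-\varpi_3h^3-\varpi_4h^4$ with $\varpi_2=d$. I then argue by cases on $\lambda$. If $\lambda\ge0$, then because $\kappa>0$ each of $d^{n+1/2}=-\tfrac{2}{\tau\kappa}-\tfrac{\lambda}{\kappa}$, $d^{n+3}$ and $d^{n+4}$ is strictly negative, bounded away from zero by $\min_{\overline\Omega}\tfrac{11}{6\tau\kappa}$ or a similar constant. For arbitrary $\lambda$ with $\tau$ small, the $\tfrac{1}{\tau\kappa}$ term dominates the bounded quantity $\lambda/\kappa$, so again $d<0$ uniformly. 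In either case $-\varpi_2h^2>0$ dominates the $O(h^3)$ remainder for small $h$, so every row sum is strictly positive.

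\textbf{Step 3 (conclusion).} Steps 1 and 2 give exactly \eqref{sign:and:sum}. Combined with the Dirichlet data $u=g$ on $\partial\Omega$, the same argument as in the proof of \cref{prop:1} (citing the M-matrix characterization) yields monotonicity, the discrete maximum principle and the M-matrix property. The step I expect to be the main obstacle is making the higher-order terms uniform. The coefficients $\varpi_3,\varpi_4$ contain $d$ and its derivatives, hence factors of $1/\tau$. The bound $|\varpi_3h+\varpi_4h^2|<|d|$ therefore needs $h$ small relative to a constant involving $\tau$; for instance, $\varpi_3$ is linear in $d$, so $h|\varpi_3|/|d|$ stays bounded. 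I would handle this by checking that each such term is at most linear in $d$ times smooth bounded functions, and that the same holds for the off-diagonal sign perturbations. I would track this dependence explicitly rather than hide it.
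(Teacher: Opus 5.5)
Your proposal is correct and follows the paper's route. The paper's proof is only ``similar to \cref{propos:tau:h}'': it takes the sign condition from the $h=0$ values in the 3D analogue of \cref{prop:1}, and the sum condition from $\varpi_2=d^{n+i}<0$ (for $\lambda\ge 0$, or for any $\lambda$ once $\tau$ is small), which is exactly your Steps 1--3. Your extra bookkeeping of the $1/\tau$ in $d$ is a refinement the paper silently skips. For instance, the term $\tfrac{d}{12}h^2$ in $C_{\pm1,0,0}$ means the off-diagonal signs genuinely require $h^2/\tau$ to be small; this is automatic under the paper's choice $\tau=rh$. For the row sums, what you need is that $|\varpi_3|$ and $|\varpi_4|$ are bounded by a multiple of $|d|$, so that $h|\varpi_3|/|d|\to 0$ and not merely stays bounded.
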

	\begin{proof}
Similar to the proof of \cref{propos:tau:h}.
	\end{proof}

	Similar to \cref{remark:order:3}, the consistency order of the FDM in \cref{thm:FDM:3D} to solve \eqref{CN:eq:3D} is three, if
	$\tau=rh$ with the positive constant $r$. Similar to \cref{thm:FDM:parabo:2D}, we propose the algorithm to derive the fourth-order compact 19-point FDM to solve \eqref{CN:eq:3D}.
	
	{\bf{ The algorithm to derive the fourth-order FDM:}} Let $\tau:=rh$ with the positive constant $r$. 
	Then \eqref{CN:eq:3D} implies
	\be\label{para:simple:3D:1}
	\Delta \Qu  +  \Qa \Qu_x + \Qb  \Qu_y + \Qc  \Qu_z + \tfrac{1}{rh} \Qd_1 \Qu  +  \Qd_2 \Qu   =\Qf,
	\ee
	where
	\be 
	\begin{split}\label{para:simple:3D:cc}
		& \Qc=c^{n+i}, \\
		& i=1/2\text{ for the CN method in } \eqref{CN:eq:3D},  \\
		& i=3\text{ for the BDF3 method in  } \eqref{CN:eq:3D},  \\
		& i=4\text{ for the BDF4 method in  } \eqref{CN:eq:3D}, 
	\end{split}
	\ee
	$\Qf$ is defined in \eqref{tau:f:f1:f2}, $\Qu$,   $\Qa$,   $\Qb$, $\Qd_1$, $\Qd_2$, $ \Qf_1$, $ \Qf_2$ are defined in \eqref{d1:d2:f1:f2}.
	Then, we propose the fourth-order compact 19-point FDM to solve \eqref{para:simple:3D:1} in the
	following theorem.
	\begin{theorem} \label{thm:FDM:parabo:3D}
		Let $\kappa>0,u,\alpha,\beta,\gamma, \lambda, \phi$ be smooth  for $(x,y,z)\in \overline{\Omega}$ and $t\in I$ in \eqref{Linear:Parabolic:3D}, and define
		\be\label{Lh:uh:para:3D}
		\begin{split}
			\mathcal{L}_h (u_h)_{i,j,k} :=&\frac{1}{h^2}\sum_{r,\ell,s=-1}^1 C_{r,\ell,s}\Big|_{(x,y,z)
				=(x_i,y_j,z_k)}(u_h)_{i+r,j+\ell,k+s}\\
			=&\frac{1}{h^2}\sum_{r,\ell,s=-1}^1 \sum_{p=0}^5 c_{r,\ell,s,p} h^p\Big|_{(x,y,z)=(x_i,y_j,z_k)} (u_h)_{i+r,j+\ell,k+s}=F_{i,j,k}\Big|_{(x,y,z)=(x_i,y_j,z_k)},
		\end{split}
		\ee
		where
		\[
		\begin{split}
			F_{i,j,k}:=& \text{the terms of } \Big(h^{-2} \sum_{(m,n,q) \in \ind_{	 3}}  \frac{ \partial^{m+n+q}\textup \Qf(x,y,z) } {\partial x^m \partial y^n \partial z^q}  \\
			&\times \sum_{r,\ell,s=-1}^{1} \sum_{p=0}^5 c_{r,\ell,s,p} h^p\Big|_{(x,y,z)=(x_i,y_j,z_k)} H_{5,m,n,q}  (rh, \ell h, sh)\Big) \text{ with degree}   \le 3 \text{ in } h,
		\end{split}
		\]
		and $c_{r,\ell,s,p}$ can be uniquely determined by solving
		\[
		I_{5,m,n,q}=\sum_{r,\ell,s=-1}^{1} G_{5,m,n,q}  (rh, \ell h, sh) \sum_{p=0}^5 c_{r,\ell,s,p} \big|_{(x,y,z)=(x_i,y_j,z_k)} h^p =\bo(h^{6}) \quad \mbox{for all}  \quad (m,n,q)\in \ind_{5}^1, 
		\]
		with
		\[
		\text{the lowest degree term of } h \ (\text{the }h^0 \text{ term})  \text{ in }  F_{i,j,k}=  \textup\Qf,
		\]
		and
		\[
		\begin{split}
			\{ & c_{-1, 0, -1, 5}, c_{-1, 0, 0, 5}, c_{-1, 0, 1, p_1}, c_{-1, 1, 0, p_1}, c_{0, -1, -1, 5}, c_{0, -1, 0, p_1},  \\      
			& c_{0, -1, 1, p_2}, c_{0, 0, -1, p_1}, c_{0, 0, 0, p_1}, c_{0, 0, 1, p_2}, c_{0, 1, -1, p_2}, c_{0, 1, 0, p_2},\\ 
			&   c_{0, 1, 1, p_3},   c_{1, -1, 0, p_2}, c_{1, 0, -1, p_2}, c_{1, 0, 0, p_3}, c_{1, 0, 1, p_3}, c_{1, 1, 0, p_4}\}=\{0\},
		\end{split}
		\]
		for	$p_1=4,5$, $p_2=3,4,5$, $p_3=2,\dots,5$, $p_4=1,\dots,5$, and
		\[
		C_{\pm 1,\pm 1,\pm 1}= C_{\mp 1,\pm 1,\pm 1}= C_{\pm1,\mp 1,\pm 1} =C_{\pm1,\pm 1,\mp 1}=0 \quad \text{and} \quad c_{0,0,0,0}=-4,
		\]
		$\ind_{5}^1$ is defined in \eqref{ind:3D},  $G_{5,m,n,q}(x,y,z)$ and $H_{5,m,n,q}(x,y,z)$ can be uniquely determined by applying \citep[eqs. (3)--(36)]{FengTrenchea2026} with replacing \eqref{conve:diff:eq} by \eqref{para:simple:3D:1} and considering $rh$ as the constant. Then \eqref{Lh:uh:para:3D} achieves a consistency order four for the equation \eqref{para:simple:3D:1} at $(x_i,y_j,z_k)$ in \eqref{xiyj:3D:space}.
	\end{theorem}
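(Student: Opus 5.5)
The plan is to follow the structure of the 2D argument leading to \eqref{L:h:u:uh} and Theorem \ref{thm:FDM:parabo:2D}, now in three variables with $\tau=rh$. There are two parts: first show that the constrained linear system for $\{c_{r,\ell,s,p}\}$ has a unique solution, and then show that this solution yields consistency order four.

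\textbf{Step 1: local expansion.} Treat $\tfrac{1}{rh}\Qd_1+\Qd_2$ as a zeroth-order coefficient. Apply \citep[eqs. (3)--(36)]{FengTrenchea2026} to \eqref{para:simple:3D:1}, using the PDE repeatedly to eliminate $x$-derivatives of order at least two. This gives the analogue of \eqref{u:GH}:
\[
u(x+x_i,y+y_j,z+z_k)=\sum_{(m,n,q)\in\ind_5^1}u^{(m,n,q)}G_{5,m,n,q}+\sum_{(m,n,q)\in\ind_3}\Qf^{(m,n,q)}H_{5,m,n,q}+\bo(h^6).
\]
Because $\tfrac1{rh}$ now enters the coefficients, the polynomials $G,H$ contain negative powers of $h$ multiplying monomials in $(x,y,z)$. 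Evaluated at $(rh,\ell h,sh)$, however, each term $h^{-k}$ is always multiplied by a monomial of degree at least $2k$, since each use of the PDE trades $\Delta$ (two orders) for the $\tfrac1{rh}$ term. Hence $G_{5,m,n,q}(rh,\ell h,sh)$ remains a genuine power series in $h$, and the remainder is still $\bo(h^6)$. I will check this degree count explicitly, since it is the point where the reasoning differs from Theorem \ref{thm:FDM:3D}.

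\textbf{Step 2: error identity and the linear system.} Substituting the expansion into \eqref{Lh:uh:para:3D} and defining $F_{i,j,k}$ as in the statement gives, exactly as in \eqref{L:h:u:uh},
\[
h^{-2}\mathcal L_h(u-u_h)_{i,j,k}=h^{-2}\sum_{\ind_5^1}u^{(m,n,q)}I_{5,m,n,q}+\bo(h^4).
\]
Fourth-order consistency is therefore equivalent to $I_{5,m,n,q}=\bo(h^6)$ for every $(m,n,q)\in\ind_5^1$. Expanding in powers of $h$, this becomes a linear system, triangular in $p$, for the unknowns $c_{r,\ell,s,p}$.

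\textbf{Step 3: solvability and uniqueness.} At order $p=0$ the system is the classical one. With the eight corner coefficients set to zero and the normalization $c_{0,0,0,0}=-4$, it forces the standard 19-point stencil (edges $\tfrac16$, faces $\tfrac13$). At each higher order $p$, the equations take the form
\[
(\text{the same } h^0 \text{ moment matrix})\,c_{\cdot,p}=(\text{known terms from } c_{\cdot,<p}).
\]
The prescribed zero set $\{c_{\dots,p_1},\dots\}=\{0\}$ is chosen to remove exactly the kernel directions of this moment matrix. The kernel arises because $\Delta$ annihilates certain discrete harmonic combinations, and these directions otherwise create non-uniqueness. One must also check consistency: the right-hand side must lie in the range of the matrix. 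The main obstacle is this rank and compatibility verification at every order $p=1,\dots,5$. A hand argument is impractical because of the $\tfrac1{rh}$ mixing. I will carry it out symbolically, as the paper does for Theorem \ref{thm:FDM:parabo:2D}: compute the rank of the augmented matrix at each order and confirm that the chosen zero pattern yields a square nonsingular subsystem.

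\textbf{Step 4: the right-hand side.} It remains to check that the $h^0$ term of $F_{i,j,k}$ equals $\Qf$. This follows from $H_{5,0,0,0}=\tfrac{x^2}{2}+\dots$ together with $c_{\cdot,0}$ reproducing $\Delta$ at leading order. With this, consistency of order four follows.
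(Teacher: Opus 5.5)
The paper gives no argument for this theorem beyond carrying the framework \eqref{Lh:u}--\eqref{L:h:u:uh} over to \eqref{para:simple:3D:1} and reporting that symbolic computation yields a unique constrained solution. Your Steps 1, 2 and 4 reproduce that framework, your degree count for the $h^{-k}$ terms in $G_{5,m,n,q}$ and $H_{5,m,n,q}$ is right, and Step 3 defers to the same computation. The problem is the mechanism you describe in Step 3, which is wrong.

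Since $G_{5,m,n,q}(rh,\ell h,sh)=\bo(h^{m+n+q})$, its leading coefficient is the harmonic polynomial $P_{m,n,q}$ evaluated at $(r,\ell,s)$. Hence $c_{\cdot,p}$ enters the $h^t$-equation of row $(m,n,q)$ only if $p\le t-(m+n+q)$. Because only $t\le 5$ is imposed, $c_{\cdot,p}$ is constrained only by the rows with $m+n+q\le 5-p$. So the order-$p$ matrix is not "the same $h^0$ moment matrix." It is the moment matrix against the $(6-p)^2$ harmonic polynomials of degree at most $5-p$. Its kernel on the 19-point stencil has dimension $1,1,4,10,15,18$ for $p=0,\dots,5$. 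These are exactly the numbers of prescribed conditions at each order: the normalization $c_{0,0,0,0}=-4$ at $p=0$, then $1,4,10,15,18$ zeros. This is why the zero set grows with $p$.

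In your picture, one fixed matrix with the one-dimensional Mehrstellen kernel, only one zero per order would be admissible. The stated pattern (e.g.\ $18$ zeros at $p=5$) would then overdetermine the system, so your own description predicts non-existence. Also, the subsystems are square only for $p=3,4,5$. For $p=1,2$ there are $25$ and $16$ equations for $18$ and $15$ unknowns, so the $7$ and $1$ compatibility conditions are a substantive part of existence, not a side check.

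Step 2 also misses the other place where $1/(rh)$ enters. The part of $h^{-2}\sum_{\ind_3}\Qf^{(m,n,q)}J_{3,m,n,q}$ discarded in forming $F_{i,j,k}$ is $\Qf\cdot\bo(h^4)$. With $\tau=rh$ you have $\Qf=\Qf_1+\Qf_2/(rh)=\bo(h^{-1})$; consistently, $\mathcal{L}_h u=\bo(h^{-1})$ because $\varpi_1=\Qd_1/r\neq 0$. So ``exactly as in \eqref{L:h:u:uh}'' only gives $\bo(h^3)$. You need one of the following:
\begin{enumerate}
\item state that consistency is meant formally, with $\Qf$ and its derivatives treated as $\bo(1)$ symbols; the statement's normalization that the $h^0$ term of $F_{i,j,k}$ is $\Qf$ implicitly does this;
\item verify that the $h^6$-coefficients of the $J_{3,m,n,q}$ vanish for the computed stencil;
\item keep one more order of the terms multiplying $\Qf_2/(rh)$ in $F_{i,j,k}$.
\end{enumerate}
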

	
	\begin{remark}
		Similar to \cref{remark:thm22}, we do not provide explicit expressions of $C_{r,\ell,s}$, $F_{i,j,k}$, $G_{5,m,n,q}$, and $H_{5,m,n,q}$ for the FDM \eqref{Lh:uh:para:3D} in \cref{thm:FDM:parabo:3D}.
		Although  \cref{thm:FDM:3D} has the lower order of consistency  than \cref{thm:FDM:parabo:3D} for the time-dependent equation, errors from \cref{thm:FDM:3D} are smaller than those from \cref{thm:FDM:parabo:3D} in \cref{Example:8} (see \cref{Example:8:table} with $h\ge 1/2^5$). From our numerical experience, the reason may be that the higher-order FDM in \cref{thm:FDM:parabo:3D} requires the complicated expression, and the simple stencil in \cref{thm:FDM:3D} has the smaller pollution effect when the mesh size $h$ is not sufficiently small. So, for 3D linear and nonlinear, time-independent and time-dependent equations, the FDM with the simple stencil in \cref{thm:FDM:3D} is the main result in this paper. We also aim to add new restrictions like \eqref{c:rlsp:pro:1}--\eqref{c:rlsp:pro:3} in our algorithm to propose the fourth-order compact 19-point FDM with the simple stencil to solve \eqref{para:simple:3D:1}. 
	\end{remark}
	
	Although $C_{r,\ell,s}$ in \eqref{Lh:uh:para:3D} is not presented explicitly, we can still provide the explicit expression of $C_{r,\ell,s}$ with $h=0$ and the leading term of  $ \sum_{r,\ell,s=-1}^1C_{r,\ell,s}$ to demonstrate that the FDM in \eqref{Lh:uh:para:3D} can form an M-matrix if $h$ is sufficiently small in the following proposition.
	\begin{proposition}
		The 27 polynomials $\{ C_{r,\ell,s} \}_{r,\ell,s=-1,0,1}$ of the variable $h$ in  \eqref{Lh:uh:para:3D} satisfy that		
		\[
		\begin{split} 
			& 	C_{\pm 1,\pm 1,\pm 1}= C_{\mp 1,\pm 1,\pm 1}= C_{\pm1,\mp 1,\pm 1} =C_{\pm1,\pm 1,\mp 1}=0,\\
			& C_{0,\pm 1,\pm 1}|_{h=0}=C_{0,\pm 1,\mp 1}|_{h=0}=C_{\pm 1,0,\pm 1}|_{h=0}=C_{\pm 1,0,\mp 1}|_{h=0}=C_{\pm 1,\pm 1,0}|_{h=0}=C_{\pm 1,\mp 1,0}|_{h=0}=\tfrac{1}{6}, \\
			&  C_{0,0,\pm 1}|_{h=0}=C_{0,\pm 1,0}|_{h=0}=C_{\pm 1,0,0}|_{h=0}=\tfrac{1}{3}, \qquad  C_{0,0,0}|_{h=0}=-4,\\
			& \sum_{r,\ell,s=-1}^1C_{r,\ell,s}=\sum_{q=0}^5 \varpi_qh^q, \quad \text{with} \quad 	\varpi_0=0 \quad \text{and} \quad \varpi_1=\textup \Qd_1/r,
		\end{split}
		\]
		where  $r=\tau/h>0$ ($\tau$ is the time step)
		\begin{align*}
			& \textup \Qd_1=\frac{-2}{\kappa^{n+1/2}},   && \hspace{-4cm} \text{for the CN method } \eqref{CN:eq:3D} \text{ with } i=1/2;\\
			& \textup \Qd_1=\frac{-11}{6 \kappa^{n+3}},  && \hspace{-4cm} \text{for the BDF3 method } \eqref{CN:eq:3D}  \text{ with } i=3;\\
			& \textup \Qd_1=\frac{-25}{12\kappa^{n+4}},  && \hspace{-4cm}  \text{for the BDF4 method } \eqref{CN:eq:3D}  \text{ with } i=4.
		\end{align*}
		Furthermore, $-\mathcal{L}_h (u_h)_{i,j,k} :=\frac{1}{h^2}\sum_{r,\ell,s=-1}^1 -C_{r,\ell,s}\big|_{(x,y,z)=(x_i,y_j,z_k)} (u_h)_{i+r,j+\ell,k+s}=-F_{i,j,k}\big|_{(x,y,z)=(x_i,y_j,z_k)}$ in \eqref{Lh:uh:para:3D} is the   maximum principle preserving and monotone scheme, and forms an M-matrix for any $\lambda$ in \eqref{Linear:Parabolic:3D}, if $h$ is sufficiently small.
	\end{proposition}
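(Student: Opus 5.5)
The statement has two parts: the algebraic facts about the stencil (values at $h=0$, vanishing corner coefficients, and the low-order terms of $\sum C_{r,\ell,s}$), and the monotonicity / M-matrix conclusion. I will prove the first part directly from the constraints imposed in \cref{thm:FDM:parabo:3D} together with consistency. The second part then follows from the sign and sum conditions exactly as in \cref{prop:1} and \cref{prop:M:matrix:para}.

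\textbf{Step 1: the vanishing corners.} The identities $C_{\pm1,\pm1,\pm1}=\dots=0$ are imposed explicitly in \cref{thm:FDM:parabo:3D}, so they need no argument.

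\textbf{Step 2: the values at $h=0$.} At $h=0$, the lowest-order (order $h^{-2}$) part of $\mathcal{L}_h u_{i,j,k}$ must reproduce $\Delta u$. The $h^{-1}$ term $\tfrac{1}{rh}\Qd_1\Qu$ enters only at order $h^{-1}$, and $\Qa,\Qb,\Qc,\Qd_2$ enter only at higher orders. Hence the $c_{r,\ell,s,0}$ must form a fourth-order compact 19-point Laplacian stencil, and I will check this with the Taylor expansion \eqref{taylor:u} (3D analogue) applied to $\sum c_{r,\ell,s,0}u_{i+r,j+\ell,k+s}$. Requiring $\bo(h^{6})$ for $\Qa=\Qb=\Qc=\Qd_1=\Qd_2=0$ in \eqref{G5mnp} gives the following:
\begin{itemize}
\item symmetry and $\sum c_{r,\ell,s,0}=0$;
\item second moments matching $2\Delta$;
\item the fourth moments forcing the $\Delta^2$ structure.
\end{itemize}
With the normalization $c_{0,0,0,0}=-4$ and zero corners, this system has the unique solution with face weights $\tfrac13$ and edge weights $\tfrac16$. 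This is the same stencil as in \cref{thm:FDM:3D} at $h=0$, which I can quote as a consistency check.

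\textbf{Step 3: the sum $\sum C_{r,\ell,s}$.} I will apply the consistency equation \eqref{G5mnp} to the basis function $G_{5,0,0,0}$, which is the part of the local solution multiplying $u^{(0,0,0)}$. Equivalently, I substitute into $\mathcal{L}_h u$ and extract the coefficient of $u^{(0,0,0)}$. Since $\mathcal{L}_h u=\Delta u+\Qa u_x+\dots+(\tfrac{1}{rh}\Qd_1+\Qd_2)u+\bo(h^4)$, the coefficient of $u$ at leading orders is $\tfrac{1}{rh}\Qd_1+\Qd_2$. On the other hand, $h^{-2}\sum C_{r,\ell,s}$ is exactly the coefficient obtained by plugging $u\equiv 1$ into the Taylor expansion (all derivatives zero). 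Comparing the two gives $\varpi_0=0$ and $\varpi_1=\Qd_1/r$. Substituting \eqref{d1:d2:f1:f2} then yields the three listed values of $\Qd_1$. The main obstacle is that the constant function is not an exact solution of \eqref{para:simple:3D:1}, so the coefficient must be read off order by order in $h$. I plan to handle this by noting that the $u^{(0,0,0)}$ coefficient at orders $h^{-2},h^{-1}$ only involves the $p=0,1$ parts of $C$, where the $\Qd_1$ contribution first appears.

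\textbf{Step 4: sign and sum conditions, and the conclusion.} By Step 2, for $h$ sufficiently small we have $-C_{0,0,0}>0$ and $-C_{r,\ell,s}\le 0$ off the center, with the zero corners contributing nothing. Since $\kappa>0$, each $\Qd_1<0$, so
\[
\sum -C_{r,\ell,s}=-\tfrac{\Qd_1}{r}h+\bo(h^2)>0
\]
for small $h$, independently of $\lambda$; the $\lambda$ term only appears at order $h^2$. The sign and sum conditions \eqref{sign:and:sum} thus hold. The M-matrix definition and the discrete maximum principle with Dirichlet data then follow verbatim as in the proof of \cref{prop:1}.
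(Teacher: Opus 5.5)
Your proof is correct, but for the algebraic half it takes a different route from the paper. The paper's proof only says ``similar to \cref{prop:M:matrix:para}'', which in turn defers to \cref{prop:1}. There, the $h=0$ values and the low-order coefficients of $\sum C$ are read off the explicit stencil (here the unprinted output of the symbolic computation), and the sign and sum conditions \eqref{sign:and:sum} are then applied. You instead derive these facts from the consistency equations \eqref{G5mnp} themselves: moment conditions on $c_{\cdot,0}$ plus vanishing corners, and the coefficient of $u^{(0,0,0)}$ at orders $h^{-2},h^{-1}$. This is more informative. It shows that the $h=0$ stencil and $\varpi_0=0$, $\varpi_1=\Qd_1/r$ are forced for \emph{any} fourth-order compact scheme with zero corners, independently of the other normalizations in \cref{thm:FDM:parabo:3D}, and it can be checked by hand. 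The paper's route gives every $\varpi_q$ but rests on computer algebra. Your Step 4 coincides with the paper's argument.

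Two justifications need tightening.

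\textbf{Step 2.} You cannot simply set $\Qa=\Qb=\Qc=\Qd_1=\Qd_2=0$. \cref{thm:FDM:parabo:3D} imposes no homogeneity restriction like \eqref{c:rlsp:pro:1}--\eqref{c:rlsp:pro:3}, so $c_{r,\ell,s,0}$ might a priori depend on the coefficients. The correct statement is that, for $\gamma\in\ind_{5}^{1}$, the $h^{|\gamma|}$ coefficient of $I_{5,\gamma}$ equals $\sum c_{\cdot,0}P_\gamma$, where $P_\gamma$ is the coefficient-free harmonic part of $G_{5,\gamma}$. This holds because every coefficient-dependent term of $G_{5,\gamma}$ on the stencil is $\bo(h^{|\gamma|+1})$. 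That includes the terms carrying $\Qd_1/(rh)$, since they come with at least two extra powers of the spatial variables and so lose only one order. Hence $c_{\cdot,0}$ annihilates all harmonic polynomials of degree at most $5$. With zero corners, the odd part then vanishes by the first- and third-moment conditions. The even part is fixed to the weights $\tfrac13,\tfrac16$ by the zeroth-, second- and fourth-moment conditions together with $c_{0,0,0,0}=-4$, for arbitrary coefficients.

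\textbf{Step 3.} The identity $\mathcal{L}_h u=\mathcal{L}u+\bo(h^4)$ is false. Consistency only gives $\mathcal{L}_h u=F+\bo(h^4)$ with $F=\Qf+\sum_{q=1}^{3}h^q(\cdots)$, so the $h^0$ coefficient of $u$ need not be $\Qd_2$. Your conclusion survives: the corrections are positive powers of $h$ times derivatives of $\Qf=\bo(h^{-1})$, so they cannot reach order $h^{-1}$. Equivalently, the $h^1$ coefficient of $I_{5,0,0,0}$ is $\sum c_{\cdot,1}-\Qd_1/r$. This comes from the term $-\tfrac12\tfrac{\Qd_1}{rh}x^2$ in $G_{5,0,0,0}$ combined with the second $x$-moment $2$ of $c_{\cdot,0}$.
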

	\begin{proof}
		Similar to the proof of \cref{prop:M:matrix:para}
	\end{proof}		
	The symbolic computation also shows that we need to calculate $ \{  \tfrac{\partial^{m+n+q} \varrho(x,y,z,t_{n+i}) }{ \partial x^m \partial y^n  \partial z^q } : m+n+q\le 3 \}$ with $\varrho=\Qa,\Qb,\Qc,\Qd_1, \Qd_2, \Qf$ defined in \eqref{tau:f:f1:f2}, \eqref{d1:d2:f1:f2}, and \eqref{para:simple:3D:cc}  in \cref{thm:FDM:parabo:3D}.
	
	\subsection{Fourth-order  compact 19-point FDM for the nonlinear time-independent  equation  in 3D}\label{subse:non:liner:elli:3D}
	In this subsection, we consider the 3D version of \eqref{Non:Linear:Elliptic:2D} as follows:
	\be\label{Non:Linear:Elliptic:3D}
	\begin{cases}
		-\nab\cdot (\kappa(u) \nab u) +   \alpha(u) u_x +   \beta(u) u_y +   \gamma(u) u_z +   \lambda(u) u = \phi, \qquad \qquad  &  (x,y)\in \Omega,\\
		u =g,  & (x,y)\in \partial \Omega,
	\end{cases}
	\ee
	where $\kappa>0$, $\alpha$, $\beta$,  $\gamma$, and $\lambda$ only depend on $u(x,y,z)$ and are smooth in $\Omega$.
	Similar to \eqref{nonlinear:2D:ellip:ite}, we have
	\be\label{simplied:eq:non:elli:3D}
	\begin{split}
		& \Delta u_{\Qk+1}  + a_\Qk (u_{\Qk+1})_x +  b_\Qk(u_{\Qk+1})_y+  c_\Qk(u_{\Qk+1})_z  +d_\Qk u_{\Qk+1} =f_\Qk,\\
		& \Qk:=\Qk+1, \quad  \text{the initial guess } u_\Qk=0 \text{ for } \Qk=0,
	\end{split}
	\ee
	where $a_\Qk, b_\Qk, d_\Qk, f_\Qk$ are defined in \eqref{ak:bk:dk:fk}, and 
	\be\label{cQk:nota}
	c_\Qk:=\frac{\kappa_u(u_\Qk)(u_\Qk)_z-\gamma(u_\Qk)}{\kappa(u_\Qk)}.
	\ee
	Then we use the fourth-order compact 19-point FDM in \cref{thm:FDM:3D} to solve \eqref{simplied:eq:non:elli:3D} until $\Qk=40$. 
			
	We also aim to extend the above high-order FDM to solve other important nonlinear steady equations like the $p$-Laplace equation in \citep{Dogbatsey2026}.

	\subsection{Fourth-order  compact 19-point FDM for the nonlinear time-dependent   equation in 3D}	\label{subse:non:liner:para:3D}		
	
	In this subsection, we consider the nonlinear version of \eqref{Linear:Parabolic:3D} as follows
	\be\label{No:Linear:Parabolic:3D} 
	u_t-\nab\cdot (\kappa(u) \nab u) +   \alpha(u) u_x +   \beta(u) u_y + \gamma(u) u_z +  \lambda (u) u = \phi.
	\ee
	For simplification,  we assume that $\kappa>0$, $\alpha$, $\beta$,  $\gamma$, and $\lambda$ only depend on $u(x,y,z,t)$ and are smooth in $\Omega$ and $I$, and define
	\be\label{ckni:3D}
	c_\Qk^{n+i}:=\frac{\kappa_u(u^{n+i}_\Qk)(u^{n+i}_\Qk)_z-\gamma(u^{n+i}_\Qk)}{\kappa(u^{n+i}_\Qk)}.
	\ee
	Similar to \eqref{CN:2D:itera}, \eqref{BDF3:2D:itera}, and \eqref{BDF4:2D:itera},  the CN, BDF3, and BDF4 methods yield
	\be\label{simplied:eq:non:para:3D}
	\begin{split} 
		& \Delta u_{\Qk+1}^{n+i}  +a_\Qk^{n+i} (u^{n+i}_{\Qk+1})_x +b_\Qk^{n+i}(u^{n+i}_{\Qk+1})_y +c_\Qk^{n+i} (u^{n+i}_{\Qk+1})_z +d_\Qk^{n+i} u^{n+i}_{\Qk+1}   = f^{n+i}_\Qk,\\
		& \text{CN} \quad i=1/2, \qquad \text{BDF3} \quad i=3,  \qquad \text{BDF4} \quad i=4, 
	\end{split} 
	\ee
	where $a_\Qk^{n+1/2}$, $b_\Qk^{n+1/2}$, and $c_\Qk^{n+1/2}$  are defined in \eqref{akni} and \eqref{ckni:3D} with $i=1/2$, $d_\Qk^{n+1/2}$ and $f^{n+1/2}_\Qk$ are defined in \eqref{dk:CN:2D} and  \eqref{fk:CN:2D}, respectively; $a_\Qk^{n+3}$, $b_\Qk^{n+3}$, and $c_\Qk^{n+3}$  are defined in \eqref{akni} and \eqref{ckni:3D} with $i=3$,  $d_\Qk^{n+3}$ and $f^{n+3}_\Qk$ are defined in \eqref{dk:BDF3:2D} and  \eqref{fk:BDF3:2D}, respectively; $a_\Qk^{n+4}$, $b_\Qk^{n+4}$, and $c_\Qk^{n+4}$  are defined in \eqref{akni} and \eqref{ckni:3D} with $i=4$,  $d_\Qk^{n+4}$ and $f^{n+4}_\Qk$ are defined in \eqref{dk:BDF4:2D} and  \eqref{fk:BDF4:2D}, respectively.
	Then we use the compact 19-point FDMs in \cref{thm:FDM:3D}  and \cref{thm:FDM:parabo:3D} to solve \eqref{simplied:eq:non:para:3D} until $\Qk=40$, where the initial guess $u^{n+i}_{\Qk}$ ($\Qk=0$), initial and boundary conditions, $u^1,u^2$ for the BDF3 method,  $u^1,u^2,u^3$ for the BDF4 method use the same settings  in  \eqref{CN:2D:itera}, \eqref{BDF3:2D:itera}, and \eqref{BDF4:2D:itera}.
			
	In future, we plan to derive high-order FDMs to solve other crucial nonlinear time-dependent equations like the model of two-phase flow in porous media in \citep{Jones2024}, the Cahn-Hilliard equation in \citep{LiQiao2016}, the Navier-Stokes equation in vorticity formulation in \citep{WangLiu2002}, and the time-fractional nonlinear diffusion-reaction equation in \citep{Zaky2023}.

	\subsection{Approximate (high-order) partial derivatives used in Sections \ref{subse:liner:elli:2D}-\ref{subse:non:liner:para:3D}}\label{app:ux}
	For the FDM in \cref{thm:FDM:2D}, we need to calculate $ \{  \tfrac{\partial^{m+n} \varrho(x,y) }{ \partial x^m \partial y^n },  \tfrac{\partial^{m+n} \varrho(x,y,t_{n+i}) }{ \partial x^m \partial y^n }  : m+n\le 2  \}$ with $\varrho=a,b,d,f$ in \eqref{notation:abdf} and $\varrho=a^{n+i},b^{n+i},d^{n+i},f^{n+i}$ in \eqref{ani:bni}--\eqref{BDF4:eq}. For the FDM in \cref{thm:FDM:parabo:2D},  we need to calculate $ \{  \tfrac{\partial^{m+n} \varrho(x,y,t_{n+i}) }{ \partial x^m \partial y^n } : m+n\le 3  \}$ with $\varrho=\Qa,\Qb,\Qd_1,\Qd_2, \Qf$ in \eqref{tau:f:f1:f2} and \eqref{d1:d2:f1:f2}. For the FDM in \cref{thm:FDM:3D}, we need to  compute $\{  \tfrac{\partial^{m+n+q} \varrho(x,y,z) }{ \partial x^m \partial y^n  \partial z^q}, \tfrac{\partial^{m+n+q} \varrho(x,y,z,t_{n+i}) }{ \partial x^m \partial y^n  \partial z^q} : m+n+q\le 2  \}$ for $\varrho=a,b,c,d,f$ in \eqref{abcd:3D}  and $\varrho=a^{n+i},b^{n+i},c^{n+i},d^{n+i},f^{n+i}$ in \eqref{CN:eq:3D}. For the FDM in \cref{thm:FDM:parabo:3D}, we need to calculate $ \{  \tfrac{\partial^{m+n+q} \varrho(x,y,z,t_{n+i}) }{ \partial x^m \partial y^n  \partial z^q } : m+n+q\le 3 \}$ with $\varrho=\Qa,\Qb,\Qc,\Qd_1, \Qd_2, \Qf$ in \eqref{tau:f:f1:f2}, \eqref{d1:d2:f1:f2}  and \eqref{para:simple:3D:cc}. For nonlinear equations, we need to compute $(u_\Qk)_x$ and $(u_\Qk)_y$ in \eqref{ak:bk:dk:fk}, $(u^{n+i}_\Qk)_x$ and $(u^{n+i}_\Qk)_y$ in \eqref{akni}, $(u_\Qk)_z$ in  \eqref{cQk:nota}, and $(u^{n+i}_\Qk)_z$  in \eqref{ckni:3D}. If symbolic expressions of above (high-order) partial derivatives are available, we use their expressions directly. Otherwise, we use following finite difference (FD) operators to approximate required (high-order) partial derivatives.
	
	For the 1D function $\varrho(x)$ and $x_i$ defined in \eqref{xiyj:2D:space}, we have following FD operators to approximate first-order to third-order derivatives of $\varrho(x)$ for all $x_i\in [l_1,l_2]$:\\
	{\bf{ The first-order derivatives with the second-order accuracy (3-point FD operators):}}
	\be \label{ux:order:2}
	\begin{split}
		\varrho_{x}(x_i)=&\tfrac{1}{h}\big[ -\tfrac{1}{2} \varrho(x_{i-1})+0\times \varrho(x_{i}) +\tfrac{1}{2}  \varrho(x_{i+1})\big]+\bo(h^{2}),  \text{ if} \quad  l_1 \le x_{i \pm 1} \le l_2,\\
		\varrho_{x}(x_i)=&\tfrac{\pm 1}{h}\big[  -\tfrac{1}{2} \varrho(x_{i\pm 2})+  2 \varrho(x_{i\pm 1})-\tfrac{3}{2}\varrho(x_{i})\big]+\bo(h^{2}),  \text{ if} \quad  l_1 \le x_{i\pm 2}, x_{i} \le l_2.
	\end{split}
	\ee
	{\bf{ The first-order derivatives with the third-order accuracy (4-point FD operators):}}
	\be \label{ux:order:3}
	\begin{split}
		\varrho_{x}(x_i)=&\tfrac{1}{h}\big[ -\tfrac{1}{3} \varrho(x_{i-1}) - \tfrac{1}{2} \varrho(x_{i}) +\varrho(x_{i+1})-\tfrac{1}{6}  \varrho(x_{i+2})\big]+\bo(h^{3}),  \text{ if} \quad  l_1 \le x_{i-1}, x_{i+2} \le l_2,\\
		\varrho_{x}(x_i)=&\tfrac{1}{h}\big[ \tfrac{1}{6} \varrho(x_{i-2}) -\varrho(x_{i-1}) + \tfrac{1}{2} \varrho(x_{i}) +\tfrac{1}{3}  \varrho(x_{i+1})\big]+\bo(h^{3}),  \text{ if} \quad  l_1 \le x_{i-2}, x_{i+1} \le l_2,\\
		\varrho_{x}(x_i)=&\tfrac{\pm 1}{h}\big[  \tfrac{1}{3}\varrho(x_{i\pm 3}) -\tfrac{3}{2} \varrho(x_{i\pm 2})+ 3 \varrho(x_{i\pm 1})  -\tfrac{11}{6}  \varrho(x_{i})\big]+\bo(h^{3}),  \text{ if} \quad  l_1 \le x_{i\pm 3}, x_{i} \le l_2.
	\end{split}
	\ee
	{\bf{ The first-order derivatives with the fourth-order accuracy (5-point FD operators):}}
	\be \label{ux:order:4}
	\begin{split}
		\varrho_{x}(x_i)=&\tfrac{1}{h}\big[\tfrac{1}{12} \varrho(x_{i-2})-  \tfrac{2}{3} \varrho(x_{i-1})+0\times \varrho(x_{i})+  \tfrac{2}{3} \varrho(x_{i+1})-\tfrac{1}{12}  \varrho(x_{i+2})\big]+\bo(h^{4}),  \\
		& \text{ if} \quad  l_1 \le x_{i\pm 2} \le l_2,\\
		\varrho_{x}(x_i)=&\tfrac{\pm 1}{h}\big[   -\tfrac{1}{4}\varrho(x_{i\pm 4})  +  \tfrac{4}{3} \varrho(x_{i\pm 3}) -3 \varrho(x_{i\pm 2}) +4  \varrho(x_{i\pm 1})   -\tfrac{25}{12}\varrho(x_{i})  \big]+\bo(h^{4}), \\
		&  \text{ if} \quad  l_1 \le x_{i\pm 4}, x_{i} \le l_2,\\		
		\varrho_{x}(x_i)=&\tfrac{\pm 1}{h}\big[   \tfrac{1}{12}  \varrho(x_{i \pm 3}) -\tfrac{1}{2}  \varrho(x_{i\pm 2})  + \tfrac{3}{2} \varrho(x_{i\pm 1}) -\tfrac{5}{6}\varrho(x_{i})-\tfrac{1}{4} \varrho(x_{i\mp 1}) \big]+\bo(h^{4}), \\
		&  \text{ if} \quad  l_1 \le  x_{i\pm 3}, x_{i\mp 1} \le l_2.
	\end{split}
	\ee
	{\bf{ The first-order derivatives with the fifth-order accuracy (6-point FD operators):}}
	\be \label{ux:order:5}
	\begin{split}
		\varrho_{x}(x_i)=&\tfrac{1}{h}\big[\tfrac{1}{20} \varrho(x_{i-2})-  \tfrac{1}{2} \varrho(x_{i-1})-\tfrac{1}{3}\varrho(x_{i})+  \varrho(x_{i+1})-\tfrac{1}{4}  \varrho(x_{i+2})+\tfrac{1}{30}  \varrho(x_{i+3})\big]+\bo(h^{5}), \\
		&  \text{ if} \quad  l_1 \le x_{i- 2}, x_{i+3} \le l_2,\\
		\varrho_{x}(x_i)=&\tfrac{1}{h}\big[-\tfrac{1}{30} \varrho(x_{i-3})+  \tfrac{1}{4} \varrho(x_{i-2})-\varrho(x_{i-1})+  \tfrac{1}{3} \varrho(x_{i})+\tfrac{1}{2}  \varrho(x_{i+1})-\tfrac{1}{20}  \varrho(x_{i+2})\big]+\bo(h^{5}), \\
		&  \text{ if} \quad  l_1 \le x_{i-3}, x_{i+2} \le l_2,\\
		\varrho_{x}(x_i)=&\tfrac{\pm 1}{h}\big[  \tfrac{1}{5} \varrho(x_{i\pm5}) -\tfrac{5}{4} \varrho(x_{i\pm4}) +\tfrac{10}{3}\varrho(x_{i\pm3})-5\varrho(x_{i\pm2})   	+5  \varrho(x_{i\pm1})  -\tfrac{137}{60}  \varrho(x_{i}) \big]+\bo(h^{5}), \\
		& \text{ if} \quad l_1 \le x_{i\pm5}, x_{i}\le l_2, \\
		\varrho_{x}(x_i)=& \tfrac{\pm1}{h}\big[ -\tfrac{1}{20} \varrho(x_{i\pm4})+ \tfrac{1}{3} \varrho(x_{i\pm3})  -\varrho(x_{i\pm2})+  2 \varrho(x_{i\pm1}) - \tfrac{13}{12} \varrho(x_{i})-\tfrac{1}{5} \varrho(x_{i\mp1})\big]+\bo(h^{5}), \\
		& \text{ if} \quad l_1 \le x_{i\pm4}, x_{i\mp1}\le l_2.
	\end{split}
	\ee
	{\bf{ The second-order derivatives  with the second-order accuracy (3-point to 4-point FD operators):}}
	\be \label{uxx:order:2}
	\begin{split}
		\varrho_{xx}(x_i)=&\tfrac{1}{h^2}\big[ \varrho(x_{i-1})-  2 \varrho(x_{i})+ \varrho(x_{i+1})\big]+\bo(h^{2}),   \text{ if} \quad  l_1 \le x_{i\pm 1} \le l_2,\\
		\varrho_{xx}(x_i)=&\tfrac{1}{h^2}\big[ -\varrho(x_{i\pm3})+  4 \varrho(x_{i\pm2}) -  5  \varrho(x_{i\pm1})+2\varrho(x_{i}) \big]+\bo(h^{2}),  \text{ if} \quad l_1 \le x_{i\pm3}, x_{i}\le l_2.
	\end{split}
	\ee
	{\bf{ The second-order derivatives  with the  fourth-order to third-order accuracy  (5-point FD operators):}}
	\be \label{uxx:order:43}
	\begin{split}
		\varrho_{xx}(x_i)=&\tfrac{1}{h^2}\big[-\tfrac{1}{12} \varrho(x_{i-2})+  \tfrac{4}{3} \varrho(x_{i-1})-  \tfrac{5}{2} \varrho(x_{i})+\tfrac{4}{3}  \varrho(x_{i+1})-\tfrac{1}{12}  \varrho(x_{i+2})\big]+\bo(h^{4}), \\
		&  \text{ if} \quad  l_1 \le x_{i\pm 2} \le l_2,\\
		\varrho_{xx}(x_i)=&\tfrac{1}{h^2}\big[ \tfrac{11}{12} \varrho(x_{i\pm4})  -\tfrac{14}{3} \varrho(x_{i\pm3})+  \tfrac{19}{2} \varrho(x_{i\pm2})  -\tfrac{26}{3} \varrho(x_{i\pm1}) +\tfrac{35}{12} \varrho(x_{i}) \big]+\bo(h^{3}), \\
		& \text{ if} \quad l_1 \le x_{i\pm4}, x_{i}\le l_2, \\
		\varrho_{xx}(x_i)=&\tfrac{1}{h^2}\big[-\tfrac{1}{12}  \varrho(x_{i\pm3})  +\tfrac{1}{3} \varrho(x_{i\pm2})+ \tfrac{1}{2} \varrho(x_{i\pm1})   -\tfrac{5}{3}   \varrho(x_{i}) +\tfrac{11}{12}      \varrho(x_{i\mp1})\big]+\bo(h^{3}), \\
		& \text{ if} \quad l_1 \le x_{i\pm3}, x_{i\mp1}\le l_2.
	\end{split}
	\ee
	{\bf{ The second-order derivatives  with the fourth-order accuracy  (5-point to 6-point FD operators):}}
	\begin{align}\label{uxx:order:4}
		\varrho_{xx}(x_i)=&\tfrac{1}{h^2}\big[-\tfrac{1}{12} \varrho(x_{i-2})+  \tfrac{4}{3} \varrho(x_{i-1})-  \tfrac{5}{2} \varrho(x_{i})+\tfrac{4}{3}  \varrho(x_{i+1})-\tfrac{1}{12}  \varrho(x_{i+2})\big]+\bo(h^{4}), \notag \\
		&  \text{ if} \quad  l_1 \le x_{i\pm 2} \le l_2, \notag \\
		\varrho_{xx}(x_i)=&\tfrac{1}{h^2}\big[ - \tfrac{5}{6}\varrho(x_{i\pm5})+\tfrac{61}{12}\varrho(x_{i\pm4})-13 \varrho(x_{i\pm3})+  \tfrac{107}{6} \varrho(x_{i\pm2})   	-  \tfrac{77}{6}   \varrho(x_{i\pm1})+\tfrac{15}{4}\varrho(x_{i}) \big]+\bo(h^{4}), \notag \\
		& \text{ if} \quad l_1 \le x_{i\pm5}, x_{i}\le l_2, \\
		\varrho_{xx}(x_i)=&\tfrac{1}{h^2}\big[\tfrac{1}{12} \varrho(x_{i\pm4})-\tfrac{1}{2} \varrho(x_{i\pm3})  +\tfrac{7}{6}\varrho(x_{i\pm2})- \tfrac{1}{3} \varrho(x_{i\pm1})-  \tfrac{5}{4} \varrho(x_{i})+\tfrac{5}{6}   \varrho(x_{i\mp1})\big]+\bo(h^{4}), \notag \\
		& \text{ if} \quad l_1 \le x_{i\pm4}, x_{i\mp1}\le l_2. \notag
	\end{align}
	{\bf{ The third-order derivatives  with the first-order accuracy  (4-point FD operators):}}
	\be\label{uxxx:order:1}
	\begin{split}
		\varrho_{xxx}(x_i)=&\tfrac{1}{h^3}\big[ -\varrho(x_{i-1})+3\varrho(x_{i})-3 \varrho(x_{i+1})+   \varrho(x_{i+2})\big]+\bo(h),   \text{ if} \quad  l_1 \le x_{i- 1},  x_{i+2}\le l_2,\\
		\varrho_{xxx}(x_i)=&\tfrac{1}{h^3}\big[ -\varrho(x_{i-2})+3\varrho(x_{i-1})-3 \varrho(x_{i})+   \varrho(x_{i+1})\big]+\bo(h),   \text{ if} \quad  l_1 \le x_{i- 2},  x_{i+1}\le l_2,\\	
		\varrho_{xxx}(x_i)=&\tfrac{\pm 1}{h^3}\big[ \varrho(x_{i \pm 3})-3\varrho(x_{i \pm 2})+3 \varrho(x_{i \pm 1})-   \varrho(x_{i})\big]+\bo(h),   \text{ if} \quad  l_1 \le x_{i \pm 3},x_i\le l_2.
	\end{split}
	\ee
	{\bf{ The third-order derivatives  with the second-order accuracy  (5-point FD operators):}}
	\be \label{uxxx:order:2} 
	\begin{split}
		\varrho_{xxx}(x_i)=&\tfrac{1}{h^3}\big[ -\tfrac{1}{2}\varrho(x_{i-2})+\varrho(x_{i-1})+0\times \varrho(x_{i})- \varrho(x_{i+1})+   \tfrac{1}{2}\varrho(x_{i+2})\big]+\bo(h^2),  \\
		& \text{ if} \quad  l_1 \le  x_{i\pm 2}\le l_2,\\
		\varrho_{xxx}(x_i)=&\tfrac{\pm 1}{h^3}\big[-\tfrac{3}{2} \varrho(x_{i \pm 4})+7\varrho(x_{i \pm 3})-12\varrho(x_{i \pm 2})+9 \varrho(x_{i \pm 1}) -\tfrac{5}{2} \varrho(x_{i})\big]+\bo(h^2),   \\
		&\text{ if} \quad  l_1 \le x_{i \pm 4},x_i\le l_2,\\
		\varrho_{xxx}(x_i)=&\tfrac{\pm 1}{h^3}\big[ -\tfrac{1}{2}\varrho(x_{i \pm 3})+3\varrho(x_{i \pm 2})-6 \varrho(x_{i \pm 1}) +5 \varrho(x_{i}) -\tfrac{3}{2} \varrho(x_{i \mp 1})\big]+\bo(h^2),  \\
		& \text{ if} \quad  l_1 \le x_{i \pm 3}, x_{i \mp 1}\le l_2.	
	\end{split}
	\ee
	{\bf{ The third-order derivatives  with the third-order accuracy  (6-point FD operators):}}
	\be \label{uxxx:order:3} 
	\begin{split}
		\varrho_{xxx}(x_i)=&\tfrac{1}{h^3}\big[-\tfrac{1}{4} \varrho(x_{i-2})-\tfrac{1}{4}  \varrho(x_{i-1})+\tfrac{5}{2} \varrho(x_{i})-  \tfrac{7}{2} \varrho(x_{i+1})+\tfrac{7}{4} \varrho(x_{i+2}) -  \tfrac{1}{4}\varrho(x_{i+3})\big]+\bo(h^{3}), \\
		&  \text{ if} \quad  l_1 \le x_{i- 2}, x_{i+3} \le l_2,\\
		\varrho_{xxx}(x_i)=&\tfrac{1}{h^3}\big[\tfrac{1}{4} \varrho(x_{i-3})-  \tfrac{7}{4} \varrho(x_{i-2})+\tfrac{7}{2}\varrho(x_{i-1})-  \tfrac{5}{2} \varrho(x_{i})+\tfrac{1}{4}  \varrho(x_{i+1})+\tfrac{1}{4}  \varrho(x_{i+2})\big]+\bo(h^{3}), \\
		&  \text{ if} \quad  l_1 \le x_{i-3}, x_{i+2} \le l_2,\\
		\varrho_{xxx}(x_i)=&\tfrac{\pm 1}{h^3}\big[ \tfrac{7}{4} \varrho(x_{i\pm5}) -\tfrac{41}{4} \varrho(x_{i\pm4})+ \tfrac{49}{2} \varrho(x_{i\pm3})-\tfrac{59}{2} \varrho(x_{i\pm2})   	+\tfrac{71}{4} \varrho(x_{i\pm1})   -\tfrac{17}{4}  \varrho(x_{i}) \big]+\bo(h^{3}), \\
		& \text{ if} \quad l_1 \le x_{i\pm5}, x_{i}\le l_2, \\
		\varrho_{xxx}(x_i)=&\tfrac{\pm 1}{h^3}\big[ \tfrac{1}{4} \varrho(x_{i\pm4}) -\tfrac{7}{4} \varrho(x_{i\pm3})+\tfrac{11}{2}\varrho(x_{i\pm2})-\tfrac{17}{2} \varrho(x_{i\pm1})+\tfrac{25}{4} \varrho(x_{i}) -\tfrac{7}{4} \varrho(x_{i\mp1})\big]+\bo(h^{3}), \\
		& \text{ if} \quad l_1 \le x_{i\pm4}, x_{i\mp1}\le l_2.
	\end{split}
	\ee
	For 2D function $\varrho(x,y)$, we can use  above FD operators to calculate $\varrho_{x}$, $\varrho_{xx}$, $\varrho_{xxx}$, $\varrho_{y}$, $\varrho_{yy}$, $\varrho_{yyy}$ at the grid point $(x_i,y_j)$ defined in  \eqref{xiyj:2D:space}. Then we calculate mixed high-order partial derivatives as follows:
	\[
	\varrho_{xy}=(\varrho_x)_y, \qquad \varrho_{xxy}=(\varrho_{xx})_y, \qquad  \varrho_{xyy}=(\varrho_{yy})_x.
	\]
	We can also calculate first-order to third-order partial derivatives
	for 3D function $\varrho(x,y,z)$ similarly. Note that the moving least-squares method in \citep{DLevin1998} can also generate high-order partial derivatives with the high-accuracy.
	
	In \cref{sec:Numeri}, we test performances of FD operators \eqref{ux:order:2}--\eqref{uxxx:order:3} for  proposed compact  FDMs in \cref{thm:FDM:2D,thm:FDM:parabo:2D,thm:FDM:3D,thm:FDM:parabo:3D}. 
	The numerical results show that  \eqref{ux:order:5}, \eqref{uxx:order:4}, and \eqref{uxxx:order:3} yield smallest errors (see \cref{Example:4:table} in \cref{Example:4}), while these three FD operators require 6-point stencils. 
	We also observe that 3-point to 4-point FD operators \eqref{ux:order:2}--\eqref{ux:order:3} and  \eqref{uxx:order:2} are sufficient to produce accurate solutions for most examples because of the robustness and stability of proposed FDMs  (see \cref{Example:2:table,Example:3:table} for \cref{Example:2,Example:3} and \cref{Example:6:table,Example:7:table} for \cref{Example:6,Example:7}). Furthermore, considering the accuracy and efficiency, 4-point to 5-point FD operators \eqref{ux:order:3}--\eqref{ux:order:4}, \eqref{uxx:order:43}, and \eqref{uxxx:order:2} are  optimal choices for FDMs in \cref{thm:FDM:2D,thm:FDM:parabo:2D,thm:FDM:3D,thm:FDM:parabo:3D} (see \cref{Example:3:table,Example:4:table} in \cref{Example:3,Example:4} and \cref{Example:6:table,Example:7:table,Example:8:table} in  \cref{Example:6,Example:7,Example:8}).
	
	\section{Numerical experiments}\label{sec:Numeri}
	We use the uniform Cartesian mesh \eqref{xiyj:2D:space} for the 2D spatial  domain $\Omega=(l_1,l_2)^2$, \eqref{xiyj:3D:space} for   the 3D spatial  domain $\Omega=(l_1,l_2)^3$, and \eqref{tim:discre} for  the temporal domain $I=[0,T]$. To verify the performance of FDMs in  \cref{thm:FDM:2D,thm:FDM:parabo:2D,thm:FDM:3D,thm:FDM:parabo:3D}, we provide the $l_{\infty}$  norm of errors for linear and nonlinear, time-independent and time-dependent convection-diffusion-reaction equations in 2D and 3D as follows:
	\[
	\begin{split}
		& \|u_h-u\|_\infty
		:=\max_{0\le i,j\le N_1} \left|(u_h)_{i,j}-u(x_i,y_j)\right|,  \\ 
		& \text{for 2D linear and nonlinear time-independent equations \eqref{Linear:Elliptic:2D} and \eqref{Non:Linear:Elliptic:2D}};\\
		& \|u_h-u\|_\infty
		:=\max_{0\le i,j\le N_1} \left|(u^{N_2}_h)_{i,j}-u(x_i,y_j,T)\right|,\\
		& \text{for 2D linear and nonlinear time-dependent equations \eqref{Linear:Parabolic:2D} and \eqref{parabo:nonlinear:2D}};\\
		& \|u_h-u\|_\infty
		:=\max_{0\le i,j,k\le N_1} \left|(u_h)_{i,j,k}-u(x_i,y_j,z_k)\right|,  \\ 
		& \text{for 3D linear and nonlinear time-independent equations \eqref{Model：Elliptic:3D} and \eqref{Non:Linear:Elliptic:3D}};\\
		& \|u_h-u\|_\infty
		:=\max_{0\le i,j,k\le N_1} \left|(u^{N_2}_h)_{i,j,k}-u(x_i,y_j,z_k,T)\right|,\\
		& \text{for 3D linear and nonlinear time-dependent equations \eqref{Linear:Parabolic:3D}  and \eqref{No:Linear:Parabolic:3D}};\\
	\end{split}
	\]
	where 
	\begin{align*}
		&(u_{h})_{i,j}=u_h|_{(x,y)=(x_i,y_j)}, && \hspace{-1cm} (u_{h}^{N_2})_{i,j}=u_h|_{(x,y,t)=(x_i,y_j,T)},\\
		&(u_{h})_{i,j,k}=u_h|_{(x,y,z)=(x_i,y_j,z_k)}, && \hspace{-1cm} (u_{h}^{N_2})_{i,j,k}=u_h|_{(x,y,z,t)=(x_i,y_j,z_k,T)}.
	\end{align*}
	As we consider 8 PDEs in this paper, we provide one example for each PDE (total 8 examples) in this section.
	
	We test 2D linear and nonlinear, steady and unsteady equations  in Examples \ref{Example:1}--\ref{Example:4}. Recall that the fourth-order compact 9-point FDM in \cref{thm:FDM:2D} is our main result in 2D.
	
	In the following \cref{Example:1}, we test the 2D linear time-independent equation \eqref{Linear:Elliptic:2D}.
	\begin{example}\label{Example:1}
		\normalfont
		The functions  of the 2D linear time-independent equation \eqref{Linear:Elliptic:2D} are given by
		\begin{align*}
			&u=\exp(x-2y)\cos(8x+4y),\qquad \kappa= ( 3+\sin(4x)\cos(4y) )^{-1} ,\qquad  \\
			&\alpha= \cos(x)\cos(y), \qquad \beta= \sin(x)\sin(y), \qquad \lambda=\exp(x+2y), \qquad \Omega=(0,1)^2,
		\end{align*}
		the source term $\phi$	and the Dirichlet boundary function $g$  are obtained by plugging above functions into \eqref{Linear:Elliptic:2D}. We use  the proposed FDM in \cref{thm:FDM:2D} to solve this example.	The numerical results are presented in \cref{Example:1:table} and \cref{Example:1:fig}.	
	\end{example}
	\begin{table}[htbp]
		\caption{Performance in \cref{Example:1} of the proposed FDM in \cref{thm:FDM:2D}.}
		\centering
		{\renewcommand{\arraystretch}{1.0}
		\scalebox{1}{
			\setlength{\tabcolsep}{5mm}{
				\begin{tabular}{c|c|c|c|c|c}
					\hline
					$h$ &   $\|u_{h}-u\|_\infty$    &order &   	$h$ &   $\|u_{h}-u\|_{\infty}$    &order \\
					\hline
					$1/2$  & 2.6500E+00  &   & $1/2^6$  & 1.7536E-06  & 4.00\\
					$1/2^2$  & 1.1246E-01  & 4.56  & $1/2^7$  & 1.0957E-07  & 4.00\\
					$1/2^3$  & 7.1938E-03  & 3.97  & $1/2^8$  & 6.8480E-09  & 4.00\\
					$1/2^4$  & 4.4988E-04  & 4.00  & $1/2^9$  & 4.2893E-10  & 4.00\\
					$1/2^5$  & 2.7960E-05  & 4.01  & $1/2^{10}$  & 3.0641E-11  & 3.81\\			
					\hline
		\end{tabular}}}}
		\label{Example:1:table}
	\end{table}
	\begin{figure}[htbp]
		\centering
		\begin{subfigure}[b]{0.31\textwidth}
			\includegraphics[width=6cm,height=6cm]{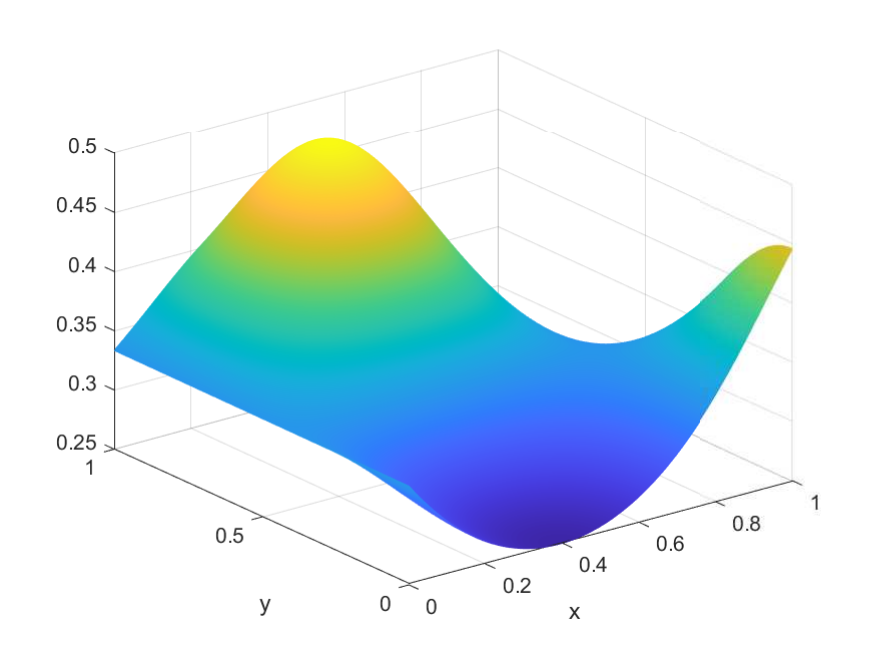}
		\end{subfigure}
		\begin{subfigure}[b]{0.31\textwidth}
			\includegraphics[width=6cm,height=6cm]{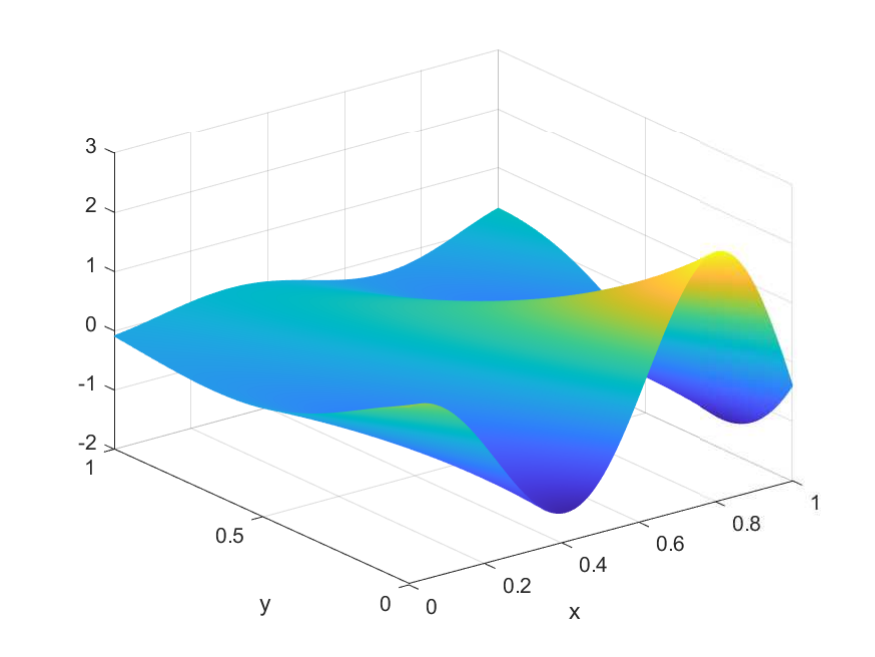}
		\end{subfigure}
		\begin{subfigure}[b]{0.31\textwidth}
			\includegraphics[width=6cm,height=6cm]{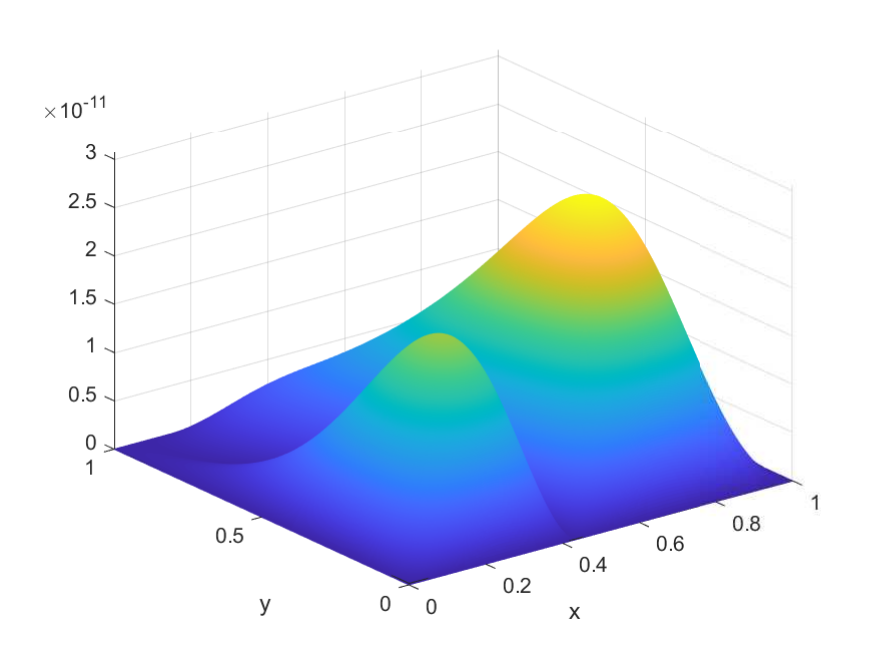}
		\end{subfigure}
		\caption
		{\cref{Example:1}: The diffusion coefficient $\kappa$ (first),  the exact solution $u$ (second), the error $|u_h-u|$ with the numerical solution $u_h$ computed by the FDM in \cref{thm:FDM:2D} (third) on the closure of the spatial domain $[0,1]^2$ with $h=1/2^{10}$.}
		\label{Example:1:fig}
	\end{figure}	
	
	In the following \cref{Example:2}, we test the 2D linear time-dependent equation \eqref{Linear:Parabolic:2D} with the high-frequency solution.
	\begin{example}\label{Example:2}
		\normalfont
		The functions  of the 2D linear time-dependent equation \eqref{Linear:Parabolic:2D} are given by
		\begin{align*}
			&u=\sin(100x)\sin(100y)\exp(t),\qquad \kappa=\exp(2x-y+t),\qquad  \alpha= \cos(x)\cos(y)\sin(t), \\
			&\beta= \sin(x)\sin(y)\cos(t), \qquad \lambda=\exp(x-y)\sin(t-x), \qquad \Omega=(0,1)^2,\\
			&  I=[0,1]\ (\text{temporal domain}),
		\end{align*}
		the source term $\phi$, the initial function $u^0$, and the Dirichlet boundary function $g$  are obtained by plugging above functions into \eqref{Linear:Parabolic:2D}. We use  the proposed FDM in \cref{thm:FDM:2D} with the CN method \eqref{CN:eq}, the BDF3 method \eqref{BDF3:eq}, and the BDF4 method \eqref{BDF4:eq}  to solve this example. Note that we apply the CN method with the FDM in \cref{thm:FDM:2D} using $\tau=h/2$ to obtain $u^1$ and $u^2$ for the BDF3 method and  compute $u^1,u^2,u^3$ for the BDF4 method.  Furthermore, second-order 3-point to 4-point FD operators \eqref{ux:order:2} and \eqref{uxx:order:2} are utilized to approximate required  first-order and second-order partial derivatives used in \cref{thm:FDM:2D} for \eqref{CN:eq}, \eqref{BDF3:eq}, and \eqref{BDF4:eq}.
		The numerical results are presented in \cref{Example:2:table} and \cref{Example:2:fig}.	
	\end{example}
	\begin{table}[htbp]
		\caption{Performance in \cref{Example:2} of the proposed FDM in \cref{thm:FDM:2D}. Note that $\Qw^{(m,n)}:=\tfrac{\partial^{m+n}\Qw}{\partial x^m \partial y^n }$ with $m+n\le 2$ represent partial derivatives of $\Qw=a^{n+i},b^{n+i},d^{n+i},f^{n+i}$ with $i=1/2,3,4$ that used in \eqref{CN:eq}, \eqref{BDF3:eq}, and \eqref{BDF4:eq}  for the FDM in \cref{thm:FDM:2D}.}
		\centering
		{\renewcommand{\arraystretch}{1.0}
		\scalebox{1}{
			\setlength{\tabcolsep}{2mm}{
				\begin{tabular}{c|c|c|c|c|c|c|c|c|c|c|c}
					\hline
					\multicolumn{12}{c}{Use \eqref{ux:order:2} and \eqref{uxx:order:2} for $\Qw^{(m,n)}$}\\
					\hline
					\multicolumn{4}{c|}{CN method \eqref{CN:eq}} &
					\multicolumn{4}{c|}{ BDF3 method \eqref{BDF3:eq}} &
					\multicolumn{4}{c}{ BDF4 method \eqref{BDF4:eq} } \\
					\cline{1-12}
					$h$ & $\tau$ &  $\|u_{h}-u\|_\infty$    &order &   	$h$ &  $\tau=h$ &   $\|u_{h}-u\|_{\infty}$    &order & 	$h$ &  $\tau=h$ &   $\|u_{h}-u\|_{\infty}$    &order \\
					\hline
					$1/2^2$  &  $1/2^2$  &  2.5701E+04  &    &  $1/2^2$  &  $1/2^2$  &  2.9143E+04  &    &  $1/2^2$  &  $1/2^2$  &  2.9188E+04  &  \\
					$1/2^3$  &  $1/2^4$  &  7.5992E+03  &  1.76  &  $1/2^3$  &  $1/2^3$  &  7.5729E+03  &  1.94  &  $1/2^3$  &  $1/2^3$  &  7.5755E+03  &  1.95\\
					$1/2^4$  &  $1/2^6$  &  1.6875E+03  &  2.17  &  $1/2^4$  &  $1/2^4$  &  1.6875E+03  &  2.17  &  $1/2^4$  &  $1/2^4$  &  1.6875E+03  &  2.17\\
					$1/2^5$  &  $1/2^8$  &  2.7609E+00  &  9.26  &  $1/2^5$  &  $1/2^5$  &  2.7643E+00  &  9.25  &  $1/2^5$  &  $1/2^5$  &  2.7643E+00  &  9.25\\
					$1/2^6$  &  $1/2^{10}$  &  3.4206E-01  &  3.01  &  $1/2^6$  &  $1/2^6$  &  3.4205E-01  &  3.01  &  $1/2^6$  &  $1/2^6$  &  3.4205E-01  &  3.01\\
					$1/2^7$  &  $1/2^{12}$  &  1.7998E-02  &  4.25  &  $1/2^7$  &  $1/2^7$  &  1.7998E-02  &  4.25  &  $1/2^7$  &  $1/2^7$  &  1.7998E-02  &  4.25\\
					$1/2^8$  &  $1/2^{14}$  &  1.0799E-03  &  4.06  &  $1/2^8$  &  $1/2^8$  &  1.0799E-03  &  4.06  &  $1/2^8$  &  $1/2^8$  &  1.0799E-03  &  4.06\\
					&    &    &    &  $1/2^9$  &  $1/2^9$  &  6.7549E-05  &  4.00  &  $1/2^9$  &  $1/2^9$  &  6.7549E-05  &  4.00\\
					\hline
		\end{tabular}}}}
		\label{Example:2:table}
	\end{table}
	\begin{figure}[htbp]
		\centering
		\begin{subfigure}[b]{0.31\textwidth}
			\includegraphics[width=6cm,height=6cm]{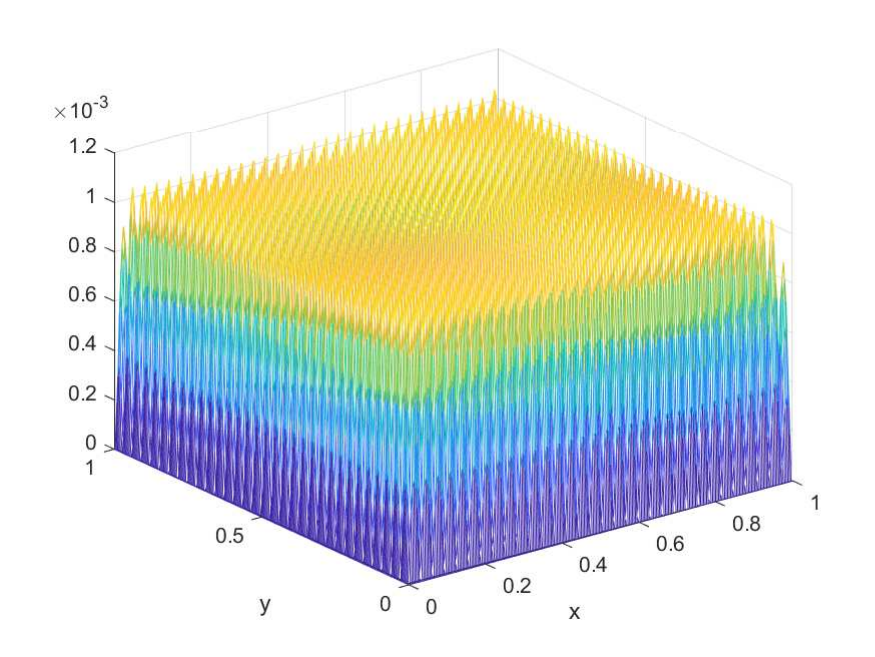}
		\end{subfigure}
		\begin{subfigure}[b]{0.31\textwidth}
			\includegraphics[width=6cm,height=6cm]{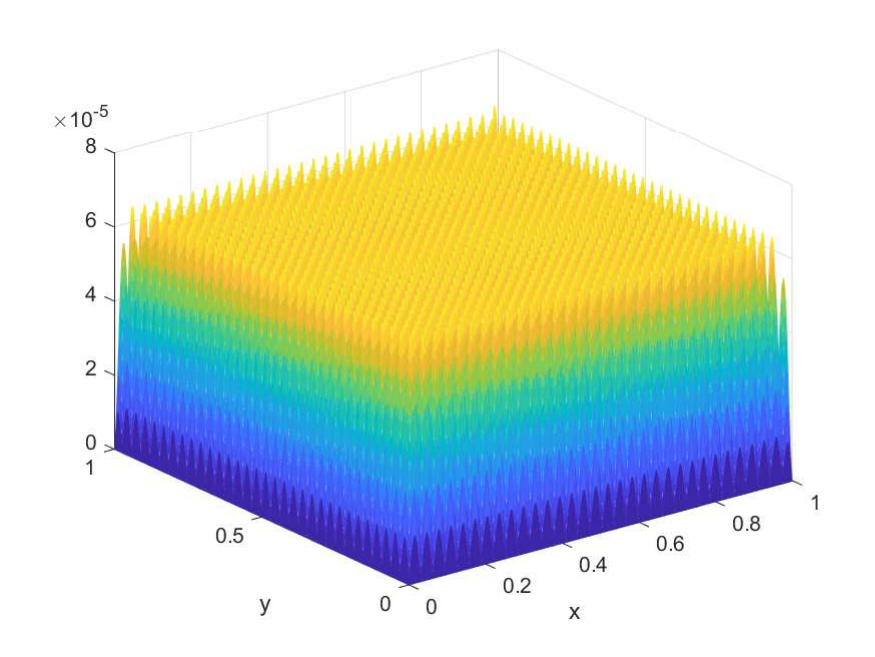}
		\end{subfigure}
		\begin{subfigure}[b]{0.31\textwidth}
			\includegraphics[width=6cm,height=6cm]{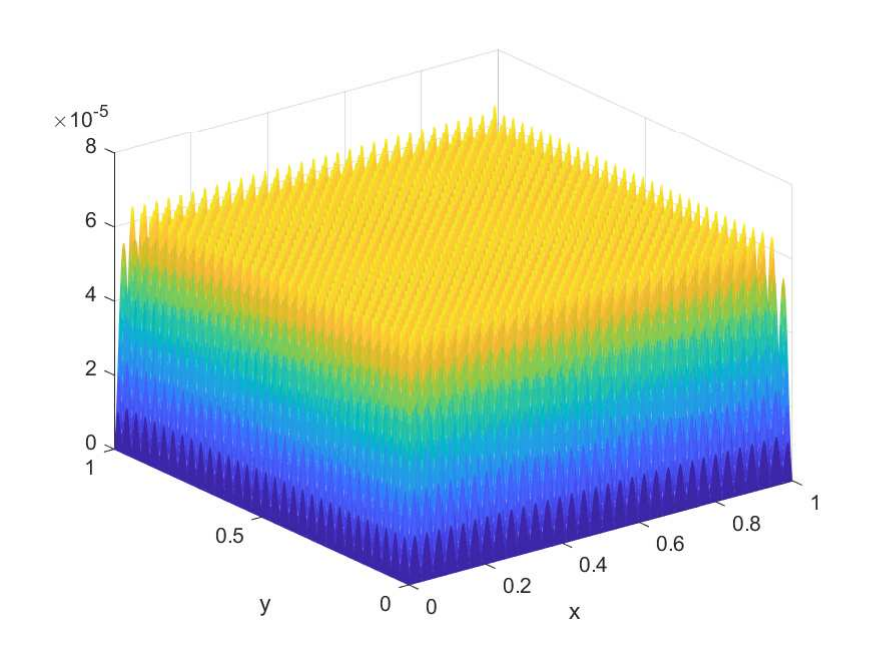}
		\end{subfigure}
		\caption
		{\cref{Example:2}: The error $|u_h-u|$ at $t=1$ with the numerical solution $u_h$ computed by the FDM in \cref{thm:FDM:2D} with the CN method \eqref{CN:eq}  with $\tau=1/2^{14}$ and $h=1/2^{8}$ (first), the error $|u_h-u|$ at $t=1$  with the numerical solution $u_h$ computed by the FDM in \cref{thm:FDM:2D} with the BDF3 method \eqref{BDF3:eq}  with $\tau=h=1/2^{9}$ (second), and the error $|u_h-u|$ at $t=1$ with the numerical solution $u_h$ computed by the FDM in \cref{thm:FDM:2D} with the BDF4 method \eqref{BDF4:eq}  with $\tau=h=1/2^{9}$ (third) on the closure of the spatial domain $[0,1]^2$.}
		\label{Example:2:fig}
	\end{figure}	
	\begin{remark} \label{remark:examp:2}
		We also apply the FDM in  \cref{thm:FDM:parabo:2D} with the CN method \eqref{CN:eq}, the BDF3 method \eqref{BDF3:eq}, and the BDF4 method \eqref{BDF4:eq} using FD operators \eqref{ux:order:2}, \eqref{uxx:order:2}, and \eqref{uxxx:order:1} to test \cref{Example:2}. As performances of  FDMs in  \cref{thm:FDM:2D} and \cref{thm:FDM:parabo:2D} are nearly identical, we omit the results from \cref{thm:FDM:parabo:2D}. Furthermore, we use second-order FD operators  \eqref{ux:order:2} and \eqref{uxx:order:2} to approximate required  first-order and second-order partial derivatives used in \cref{thm:FDM:2D} for \eqref{CN:eq}, \eqref{BDF3:eq}, and \eqref{BDF4:eq}, but numerical results in \cref{Example:2:table} still present the fourth-order accuracy, which demonstrates the robustness of our proposed FDM.
	\end{remark}	
	
	In the following \cref{Example:3}, we test the 2D nonlinear time-independent equation \eqref{Non:Linear:Elliptic:2D} with the challenging coefficient $\kappa<2.2\times 10^{-4}$.
	\begin{example}\label{Example:3}
		\normalfont
		The functions  of the 2D nonlinear time-independent equation \eqref{Non:Linear:Elliptic:2D} are given by
		\begin{align*}
			&u=\sin(x)\sin(y),\qquad \kappa=10^{-4}\exp(u), \\
			&\alpha= u^2, \qquad \beta= u^3, \qquad \lambda=\exp(-u), \qquad \Omega=(0,1)^2,
		\end{align*}
		the source term $\phi$	and the Dirichlet boundary function $g$  are obtained by plugging above functions into \eqref{Non:Linear:Elliptic:2D}. We use  the proposed FDM in \cref{thm:FDM:2D} to solve this example. Furthermore, FD operators
		\eqref{ux:order:3}--\eqref{ux:order:5} are used to calculate $(u_\Qk)_x$ and $(u_\Qk)_y$ in \eqref{ak:bk:dk:fk} for \eqref{nonlinear:2D:ellip:ite} for the FDM in \cref{thm:FDM:2D}, FD operators \eqref{ux:order:2}--\eqref{ux:order:4} and \eqref{uxx:order:2}--\eqref{uxx:order:4} are used to calculate the first-order and second-order partial derivatives of $a_\Qk$, $b_\Qk$, $d_\Qk$, and $f_\Qk$  in \eqref{nonlinear:2D:ellip:ite} for the  FDM in \cref{thm:FDM:2D}.
		The numerical results are presented in \cref{Example:3:table} and \cref{Example:3:fig}.	From results in \cref{Example:3:table}, we observe that higher-order or simpler FD operators for  $(u_\textup \Qk)_x, (u_\textup\Qk)_y$,  first-order to  second-order partial derivatives of $a_\textup\Qk$, $b_\textup\Qk$, $d_\textup\Qk$, and $f_\textup\Qk$  in \eqref{nonlinear:2D:ellip:ite}  produce smaller errors. As the FDM in \cref{thm:FDM:2D} is the fourth-order accuracy, results of Settings 2 and 3 are very close in  \cref{Example:3:table}.
	\end{example}
	\begin{table}[htbp]
		\caption{Performance in \cref{Example:3} of the proposed FDM in \cref{thm:FDM:2D}. Note that $\Qw^{(m,n)}:=\tfrac{\partial^{m+n}\Qw}{\partial x^m \partial y^n }$ with $m+n\le 2$ represent partial derivatives of $\Qw=a_\Qk,b_\Qk,d_\Qk,f_\Qk$ that used in \eqref{nonlinear:2D:ellip:ite} for the FDM in \cref{thm:FDM:2D}. }
		\centering
		{\renewcommand{\arraystretch}{1.0}
		\scalebox{1}{
			\setlength{\tabcolsep}{4mm}{
				\begin{tabular}{c|c|c|c|c|c|c|c|c}
					\hline
					\multicolumn{3}{c|}{Setting 1} &
					\multicolumn{3}{c|}{Setting 2} &
					\multicolumn{3}{c}{ Setting 3 } \\
					\hline
					\multicolumn{3}{c|}{\eqref{ux:order:3} for  $(u_\Qk)_x$ and $(u_\Qk)_y$} &
					\multicolumn{3}{c|}{\eqref{ux:order:4} for  $(u_\Qk)_x$ and $(u_\Qk)_y$} &
					\multicolumn{3}{c}{ \eqref{ux:order:5} for  $(u_\Qk)_x$ and $(u_\Qk)_y$ } \\
					\hline
					\multicolumn{3}{c|}{\eqref{ux:order:2} and \eqref{uxx:order:2} for $\Qw^{(m,n)}$} &
					\multicolumn{3}{c|}{\eqref{ux:order:3} and \eqref{uxx:order:43} for $\Qw^{(m,n)}$} &
					\multicolumn{3}{c}{ \eqref{ux:order:4} and \eqref{uxx:order:4} for $\Qw^{(m,n)}$ } \\
					\cline{1-9}
					\hline
					$h$ &   $\|u_{h}-u\|_\infty$    &order &   	$h$ &   $\|u_{h}-u\|_{\infty}$    &order & $h$ &   $\|u_{h}-u\|_{\infty}$    &order \\
					\hline
					$1/2^2$  &  2.1759E-02  &    &  $1/2^2$  &  6.3150E-03  &    &    &    &  \\
					$1/2^3$  &  3.5959E-03  &  2.60  &  $1/2^3$  &  1.0915E-03  &  2.53  &  $1/2^3$  &  1.3093E-03  &  \\
					$1/2^4$  &  5.2502E-04  &  2.78  &  $1/2^4$  &  1.8267E-04  &  2.58  &  $1/2^4$  &  2.0504E-04  &  2.67\\
					$1/2^5$  &  5.6626E-05  &  3.21  &  $1/2^5$  &  2.1998E-05  &  3.05  &  $1/2^5$  &  2.3288E-05  &  3.14\\
					$1/2^6$  &  4.6018E-06  &  3.62  &  $1/2^6$  &  1.9364E-06  &  3.51  &  $1/2^6$  &  1.9867E-06  &  3.55\\
					$1/2^7$  &  3.2363E-07  &  3.83  &  $1/2^7$  &  1.4022E-07  &  3.79  &  $1/2^7$  &  1.4181E-07  &  3.81\\
					$1/2^8$  &  2.1230E-08  &  3.93  &  $1/2^8$  &  9.2721E-09  &  3.92  &  $1/2^8$  &  9.3212E-09  &  3.93\\
					$1/2^9$  &  1.3480E-09  &  3.98  &  $1/2^9$  &  5.9108E-10  &  3.97  &  $1/2^9$  &  5.9259E-10  &  3.98\\
					$1/2^{10}$  &  8.4491E-11  &  4.00  &  $1/2^{10}$  &  3.7117E-11  &  3.99  &  $1/2^{10}$  &  3.7164E-11  &  4.00\\		
					\hline
		\end{tabular}}}}
		\label{Example:3:table}
	\end{table}
	\begin{figure}[htbp]
		\centering
		\begin{subfigure}[b]{0.23\textwidth}
			\includegraphics[width=4cm,height=4cm]{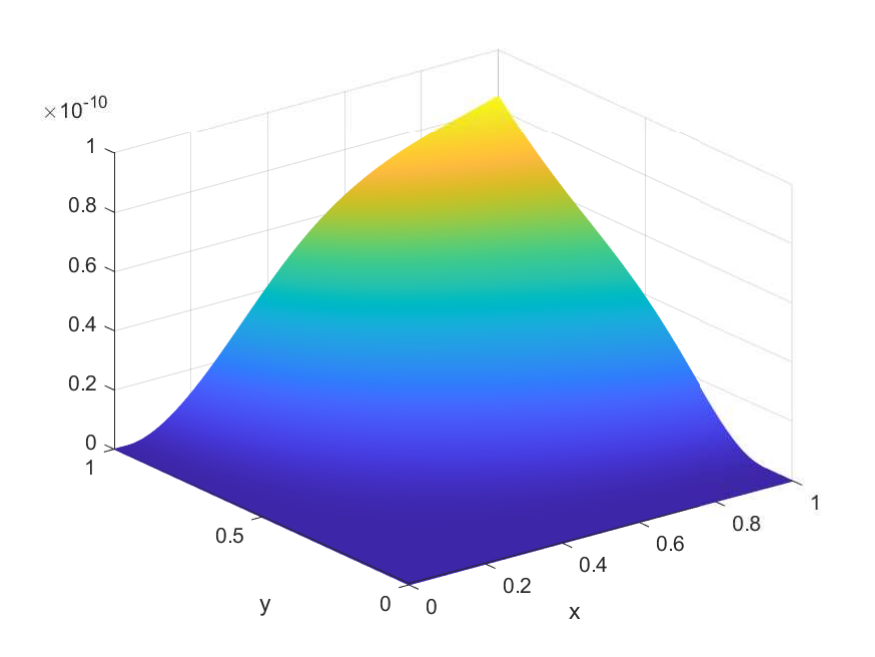}
		\end{subfigure}
		\begin{subfigure}[b]{0.23\textwidth}
			\includegraphics[width=4cm,height=4cm]{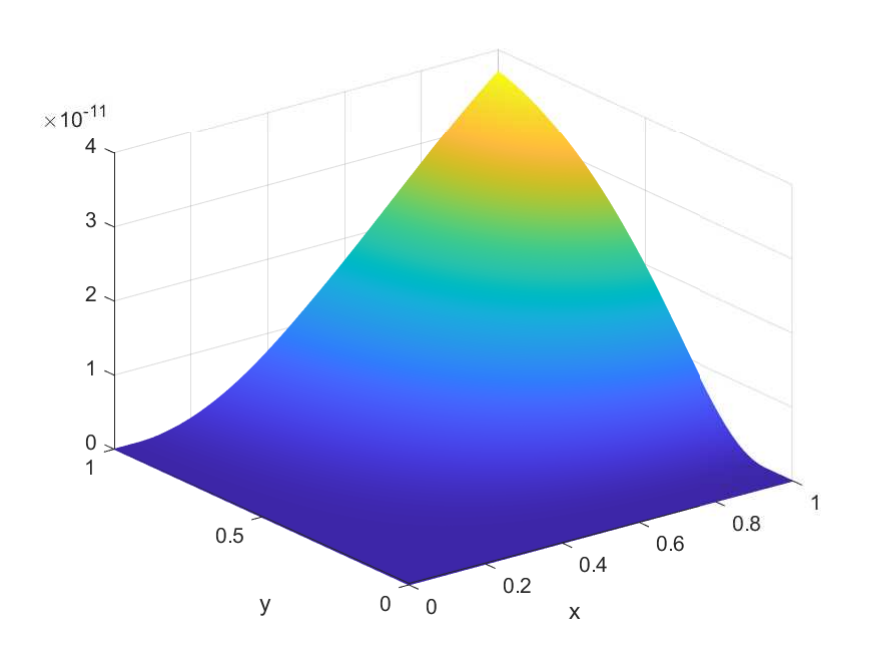}
		\end{subfigure}
		\begin{subfigure}[b]{0.23\textwidth}
			\includegraphics[width=4cm,height=4cm]{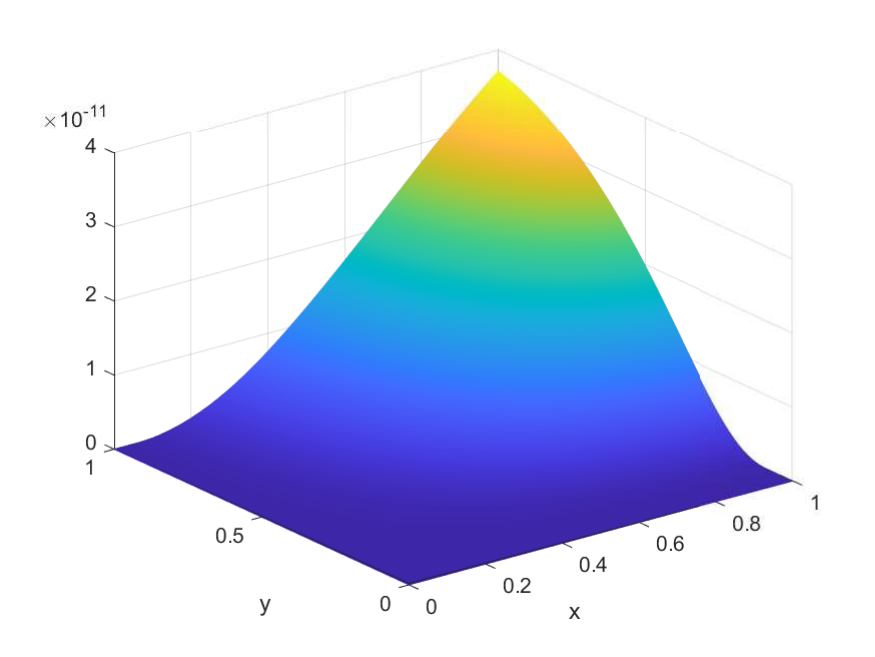}
		\end{subfigure}
		\begin{subfigure}[b]{0.23\textwidth}
			\includegraphics[width=4cm,height=4cm]{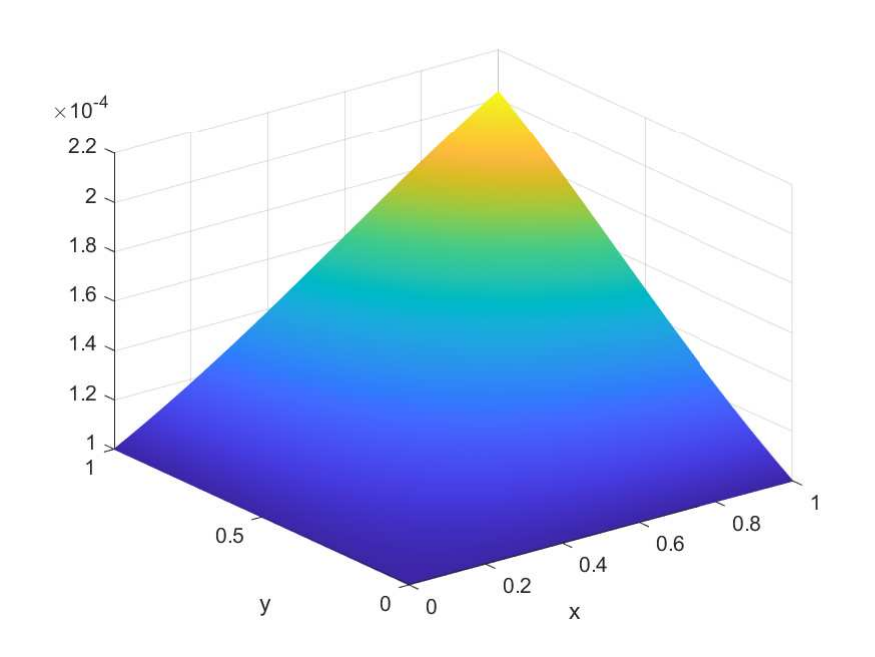}
		\end{subfigure}
		\caption
		{ The errors $|u_h-u|$ of \cref{Example:3} with the numerical solutions $u_h$ computed by the FDM in \cref{thm:FDM:2D} with Settings 1-3 in \cref{Example:3:table}:   $|u_h-u|$ with the Setting 1 (first),  $|u_h-u|$ with the Setting 2 (second),  $|u_h-u|$ with the Setting 3 (third) on the closure of the spatial domain $[0,1]^2$ with $h=1/2^{10}$, the coefficient $\kappa$ (fourth). See details of Settings 1-3 in \cref{Example:3:table}.}
		\label{Example:3:fig}
	\end{figure}	
	
	In the following \cref{Example:4}, we test the 2D nonlinear time-dependent equation \eqref{parabo:nonlinear:2D}. By FDMs in \cref{thm:FDM:2D} and \cref{thm:FDM:parabo:2D} with CN, BDF3, BDF4 methods, we need to  calculate $ \{  \tfrac{\partial^{m+n} \varrho(x,y,t_{n+i}) }{ \partial x^m \partial y^n } : m+n\le 2 \text{ or } 3  \}$ with $\varrho=a_\Qk^{n+i}, b_\Qk^{n+i}, d_\Qk^{n+i}, f_\Qk^{n+i}$ in   \eqref{CN:2D:itera}, \eqref{BDF3:2D:itera}, \eqref{BDF4:2D:itera}, and compute $(u^{n+i}_\Qk)_x$ and $(u^{n+i}_\Qk)_y$ in \eqref{akni} with $i=1/2,3,4$. By \eqref{akni}, \eqref{CN:2D:itera}, \eqref{BDF3:2D:itera}, \eqref{BDF4:2D:itera}, it is obviously that $(u^{n+i}_\Qk)_x$ and $(u^{n+i}_\Qk)_y$  should have a higher order accuracy than $\varrho_x$ and $\varrho_y$  (see \cref{Example:3:table} for an example). To make  codes simple and check the flexibility of our scheme, we use the same approximation in the following \cref{Example:4}.
	\begin{example}\label{Example:4}
		\normalfont
		The functions  of the 2D nonlinear time-dependent equation \eqref{parabo:nonlinear:2D} are given by
		\begin{align*}
			&u=\sin(2x)\sin(2y)\exp(t),\qquad \kappa=2+\sin u,\qquad  \alpha= \cos u, \\
			& \beta= \sin u, \qquad \lambda=\sin(3u), \qquad \Omega=(0,1)^2, \qquad  I=[0,1]\ (\text{temporal domain}),
		\end{align*}
		the source term $\phi$, the initial function $u^0$,	and the Dirichlet boundary function $g$  are obtained by plugging above functions into \eqref{parabo:nonlinear:2D}. We use proposed FDMs in \cref{thm:FDM:2D} and \cref{thm:FDM:parabo:2D} with BDF3 and BDF4 methods to solve this example. Note that we utilize the CN method with FDMs in \cref{thm:FDM:2D,thm:FDM:parabo:2D} using $\tau=h/2$ to calculate $u^1$ and $u^2$ for the BDF3 method and compute $u^1,u^2,u^3$ for the BDF4 method. Furthermore, FD operators
		\eqref{ux:order:4}--\eqref{ux:order:5}, \eqref{uxx:order:43}--\eqref{uxx:order:4}, and \eqref{uxxx:order:2}--\eqref{uxxx:order:3}  are used to calculate  $(u^{n+i}_\Qk)_x$ and $(u^{n+i}_\Qk)_y$ with $i=1/2,3,4$ in \eqref{akni},    the first-order to third-order partial derivatives of $a^{n+i}_\Qk$, $b^{n+i}_\Qk$, $d^{n+i}_\Qk$, and $f^{n+i}_\Qk$ with $i=1/2,3,4$  in \eqref{CN:2D:itera}, \eqref{BDF3:2D:itera} and \eqref{BDF4:2D:itera} used in FDMs of \cref{thm:FDM:2D} and \cref{thm:FDM:parabo:2D}.
		The numerical results are presented in \cref{Example:4:table} and \cref{Example:4:fig}.	
	\end{example}
	\begin{table}[htbp]
		\caption{Performance in \cref{Example:4} of proposed FDMs in \cref{thm:FDM:2D} and \cref{thm:FDM:parabo:2D}. }
		\centering
		{\renewcommand{\arraystretch}{1.0}
		\scalebox{1}{
			\setlength{\tabcolsep}{1.5mm}{
				\begin{tabular}{c|c|c|c|c|c|c|c|c|c|c|c}
					\hline
					\multicolumn{9}{c|}{Use \eqref{ux:order:4}, \eqref{uxx:order:43}, \eqref{uxxx:order:2}} &
					\multicolumn{3}{c}{Use \eqref{ux:order:5}, \eqref{uxx:order:4},  \eqref{uxxx:order:3}} \\
					\hline
					\multicolumn{1}{c|}{} &
					\multicolumn{2}{c|}{BDF3, $\tau=h$} &
					\multicolumn{2}{c|}{BDF4, $\tau=h$} &
					\multicolumn{2}{c|}{ BDF3, $\tau=h$} &
					\multicolumn{2}{c|}{ BDF4, $\tau=h$} &
					\multicolumn{1}{c|}{} &
					\multicolumn{2}{c}{BDF4, $\tau=h$}
					\\
					\hline
					\multicolumn{1}{c|}{} &
					\multicolumn{2}{c|}{\cref{thm:FDM:2D}} &
					\multicolumn{2}{c|}{\cref{thm:FDM:2D}} &
					\multicolumn{2}{c|}{\cref{thm:FDM:parabo:2D} } &
					\multicolumn{2}{c|}{\cref{thm:FDM:parabo:2D} } &
					\multicolumn{1}{c|}{} &
					\multicolumn{2}{c}{\cref{thm:FDM:parabo:2D}} \\
					\cline{1-12}
					\hline
					&   col2    &  &   	  &     &    col6    &  &   col8    & & & & \\
					\hline
					$h$ &   $\|u_{h}-u\|_\infty$    &order &   	   $\|u_{h}-u\|_{\infty}$    &order &    $\|u_{h}-u\|_{\infty}$    &order &   $\|u_{h}-u\|_\infty$    &order & 	$h$ &   \hspace{0.2cm} $\|u_{h}-u\|_\infty$ \hspace{0.2cm}   & order \\
					\hline
					$1/2^2$  &  7.8562E-03  &    &  8.2860E-03  &    &  8.4327E-03  &    &  8.7740E-03  &    &  $1/2^2$  &    &  \\
					$1/2^3$  &  2.5242E-04  &  4.96  &  2.4086E-04  &  5.10  &  2.4106E-04  &  5.13  &  2.2836E-04  &  5.26  &  $1/2^3$  &  2.3560E-04  &  \\
					$1/2^4$  &  1.0415E-05  &  4.60  &  1.2707E-05  &  4.24  &  1.2801E-05  &  4.24  &  1.4055E-05  &  4.02  &  $1/2^4$  &  1.3977E-05  &  4.08\\
					$1/2^5$  &  6.7493E-07  &  3.95  &  1.0386E-06  &  3.61  &  5.9210E-07  &  4.43  &  7.4677E-07  &  4.23  &  $1/2^5$  &  6.6868E-07  &  4.39\\
					$1/2^6$  &  5.3259E-08  &  3.66  &  1.0333E-07  &  3.33  &  3.0387E-08  &  4.28  &  4.4347E-08  &  4.07  &  $1/2^6$  &  4.1398E-08  &  4.01\\
					$1/2^7$  &  5.0304E-09  &  3.40  &  1.1351E-08  &  3.19  &  3.4442E-09  &  3.14  &  2.7525E-09  &  4.01  &  $1/2^7$  &  2.5911E-09  &  4.00\\
					$1/2^8$  &  5.4984E-10  &  3.19  &  1.3239E-09  &  3.10  &  5.5458E-10  &  2.63  &  1.7215E-10  &  4.00  &  $1/2^8$  &  1.6222E-10  &  4.00\\
					\hline
		\end{tabular}}}}
		\label{Example:4:table}
	\end{table}
	\begin{figure}[htbp]
		\centering
		\begin{subfigure}[b]{0.23\textwidth}
			\includegraphics[width=4cm,height=4cm]{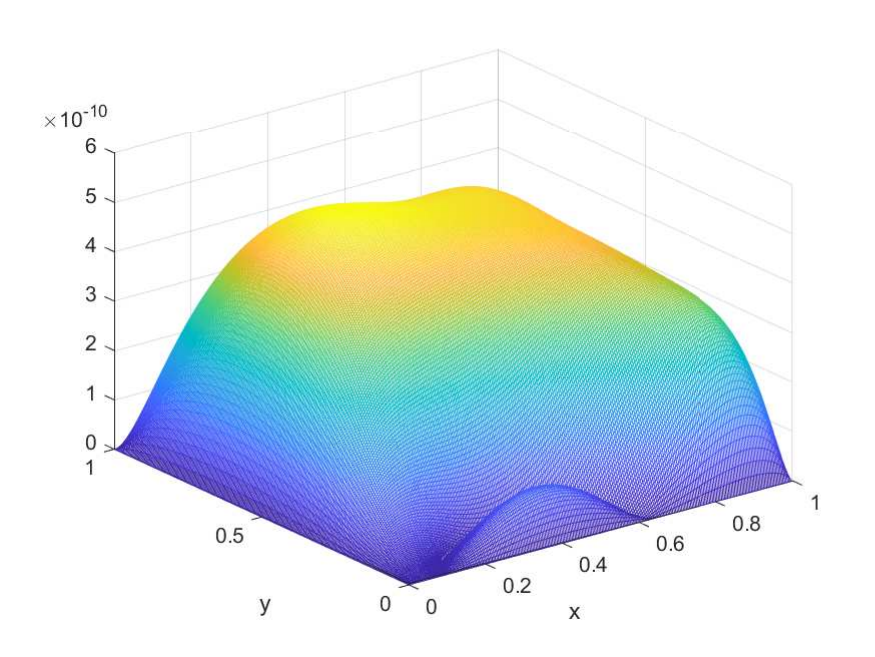}
		\end{subfigure}
		\begin{subfigure}[b]{0.23\textwidth}
			\includegraphics[width=4cm,height=4cm]{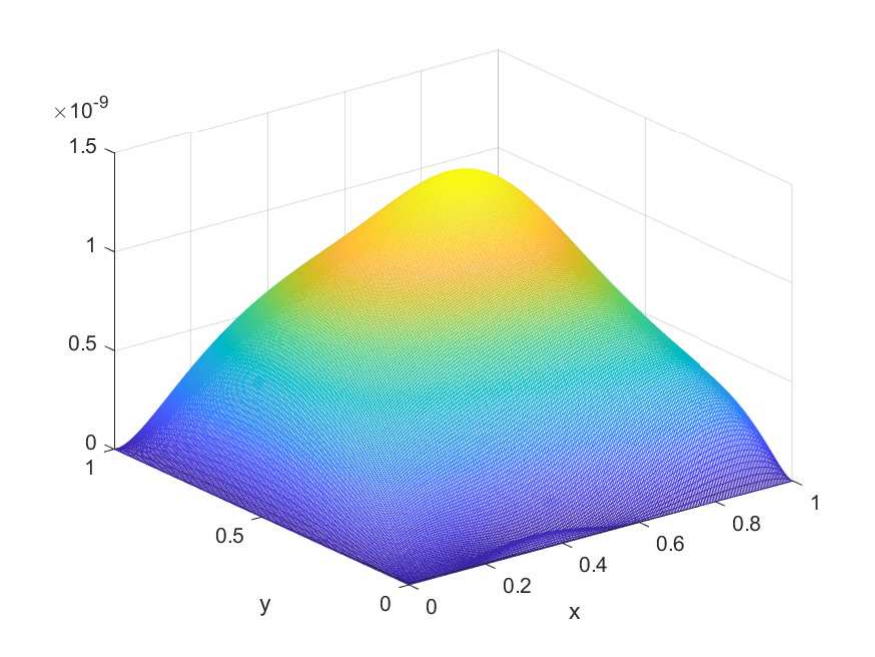}
		\end{subfigure}
		\begin{subfigure}[b]{0.23\textwidth}
			\includegraphics[width=4cm,height=4cm]{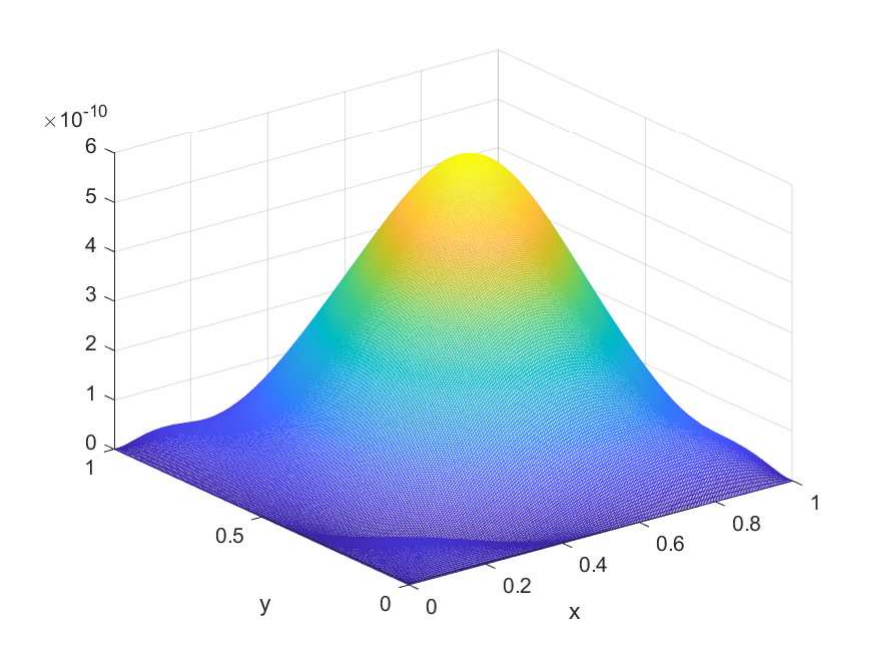}
		\end{subfigure}
		\begin{subfigure}[b]{0.23\textwidth}
			\includegraphics[width=4cm,height=4cm]{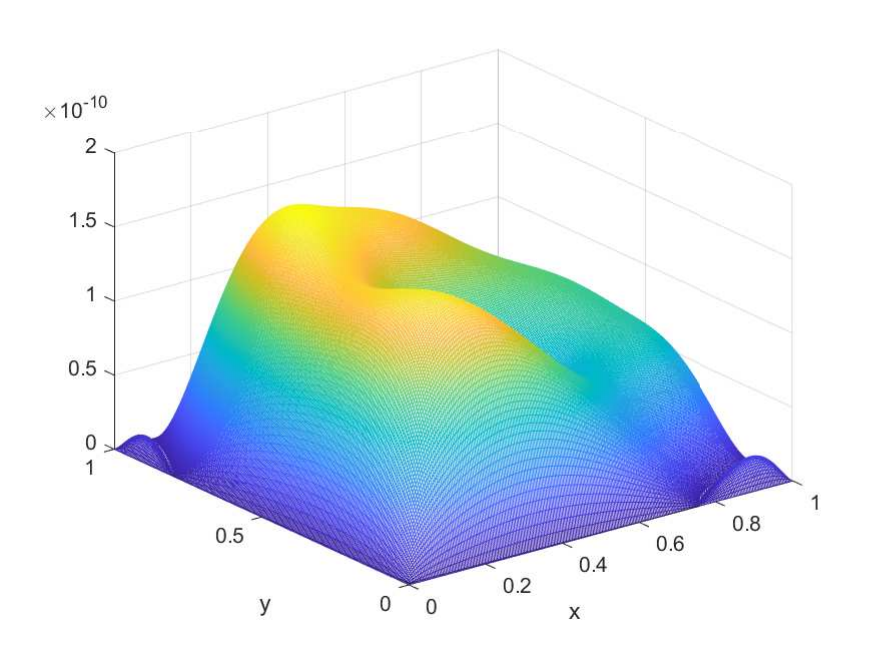}
		\end{subfigure}
		\caption
		{ The errors $|u_h-u|$ at $t=1$ of \cref{Example:4} with numerical solutions $u_h$ computed by FDMs in \cref{thm:FDM:2D} and \cref{thm:FDM:parabo:2D} with BDF3 and BDF4 methods using FD operators \eqref{ux:order:4}, \eqref{uxx:order:43}, and \eqref{uxxx:order:2}:   $|u_h-u|$ with \cref{thm:FDM:2D}, BDF3, and $\tau=h$  (first),   $|u_h-u|$ with \cref{thm:FDM:2D}, BDF4, and $\tau=h$  (second),   $|u_h-u|$ with  \cref{thm:FDM:parabo:2D}, BDF3, and $\tau=h$   (third), $|u_h-u|$ with  \cref{thm:FDM:parabo:2D}, BDF4, and $\tau=h$  (fourth) on the closure of the spatial domain $[0,1]^2$ with $h=1/2^{8}$.}
		\label{Example:4:fig}
	\end{figure}	
	\begin{remark} 
		In \cref{Example:4:table}, the BDF3 method with  \cref{thm:FDM:2D} and \cref{thm:FDM:parabo:2D} achieves the third-order accuracy when $h=1/2^8$, and the BDF4 method with \cref{thm:FDM:parabo:2D} achieves the fourth-order accuracy which satisfy the expectation. From \cref{remark:order:3}, we have that the degree of  $h$	of the leading term of the truncation error of the FDM in \cref{thm:FDM:2D} with the BDF4 method is three if $\tau=rh$ with the positive constant $r$. So the accuracy order of the BDF4 method with  \cref{thm:FDM:2D} is three in \cref{Example:4:table} when $h=1/2^8$. Observing from $\|u_{h}-u\|_\infty$ in col2, col6 and col8, even the BDF3 method with \cref{thm:FDM:2D} only achieves third-order accuracy,  our proposed FDM in \cref{thm:FDM:2D} with the simple stencil can still produce the reliable numerical solution for the  2D nonlinear time-dependent equation \eqref{parabo:nonlinear:2D}.
	\end{remark}	
	
	We test 3D linear and nonlinear, stationary and nonstationary equations  in Examples \ref{Example:5}--\ref{Example:8}. Note that the fourth-order compact 19-point FDM in \cref{thm:FDM:3D} is our main result in 3D.
	
	In the following \cref{Example:5}, we test the 3D linear time-independent equation \eqref{Model：Elliptic:3D}.
	\begin{example}\label{Example:5}
		\normalfont
		The functions  of the 3D linear time-independent equation \eqref{Model：Elliptic:3D} are given by
		\begin{align*}
			&u=\exp(x^2-y^2)\sin(z),\qquad \kappa=\exp(xyz),\qquad \alpha= \sin(x)\sin(y)\sin(z), \\
			& \beta= \cos(x)\cos(y)\cos(z), \qquad \gamma=x^2+2y^2+4z^2,\qquad \lambda=x^3+3y^3+6z^3, \qquad \Omega=(-1,1)^3,
		\end{align*}
		the source term $\phi$	and the Dirichlet boundary function $g$  are obtained by plugging above functions into \eqref{Model：Elliptic:3D}. We use  the proposed FDM in \cref{thm:FDM:3D} to solve this example.	The numerical results are presented in \cref{Example:5:table} and \cref{Example:5:fig}.	
	\end{example}
	\begin{table}[htbp]
		\caption{Performance in \cref{Example:5} of the proposed FDM in \cref{thm:FDM:3D}.}
		\centering
		{\renewcommand{\arraystretch}{1.0}
		\scalebox{1}{
			\setlength{\tabcolsep}{10mm}{
				\begin{tabular}{c|c|c}
					\hline
					$h$ &   $\|u_{h}-u\|_\infty$    &order \\
					\hline
					$2/2$  &  2.6319E-01  &  \\
					$2/2^2$  &  4.8038E-02  &  2.45 \\
					$2/2^3$  &  2.7194E-03  &  4.14 \\
					$2/2^4$  &  1.7121E-04  &  3.99 \\
					$2/2^5$  &  1.2266E-05  &  3.80 \\
					$2/2^6$  &  7.8650E-07  &  3.96 \\
					\hline
		\end{tabular}}}}
		\label{Example:5:table}
	\end{table}
	\begin{figure}[htbp]
		\centering
		\begin{subfigure}[b]{0.31\textwidth}
			\includegraphics[width=6cm,height=5cm]{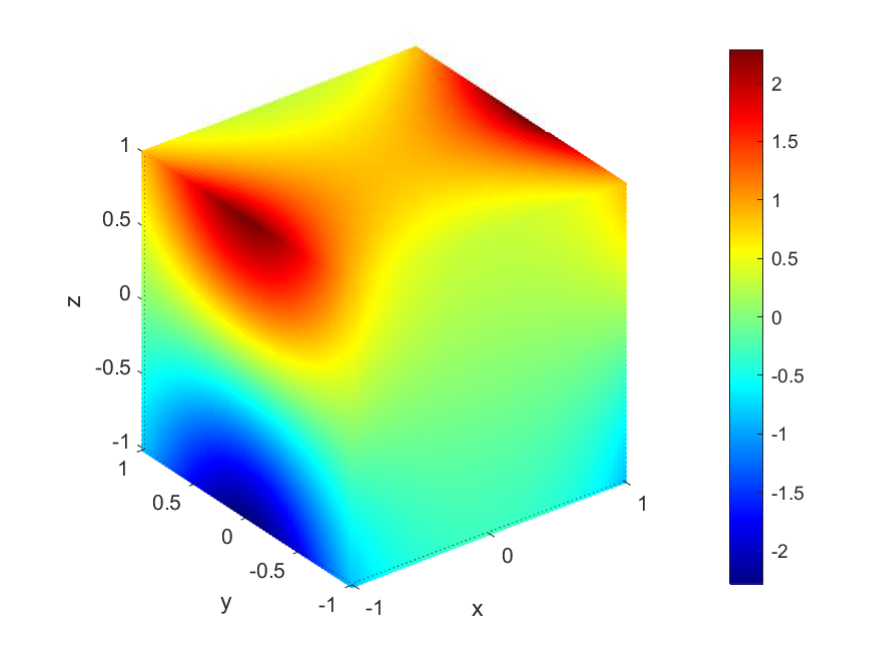}
		\end{subfigure}
		\begin{subfigure}[b]{0.31\textwidth}
			\includegraphics[width=6cm,height=5cm]{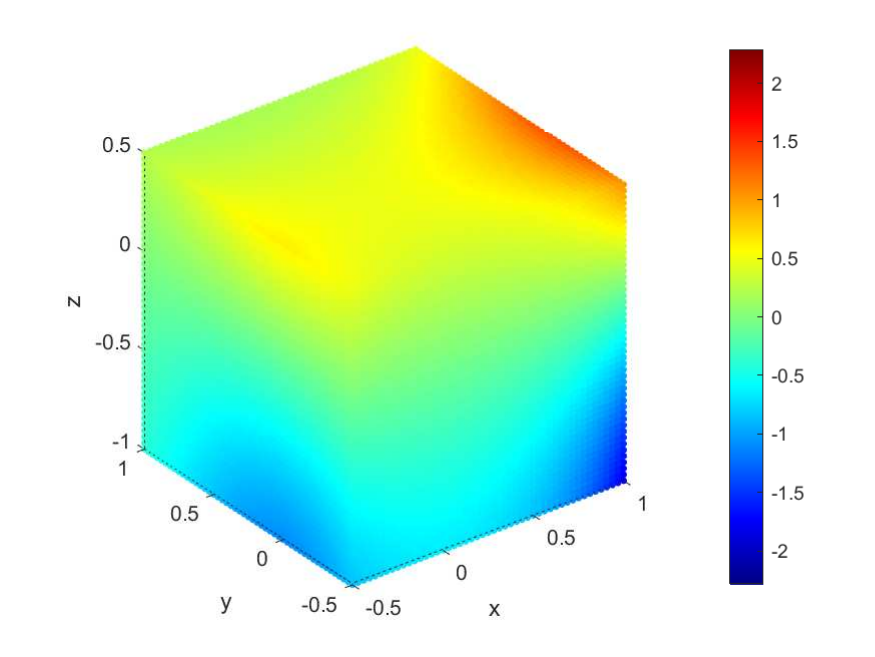}
		\end{subfigure}
		\begin{subfigure}[b]{0.31\textwidth}
			\includegraphics[width=6cm,height=5cm]{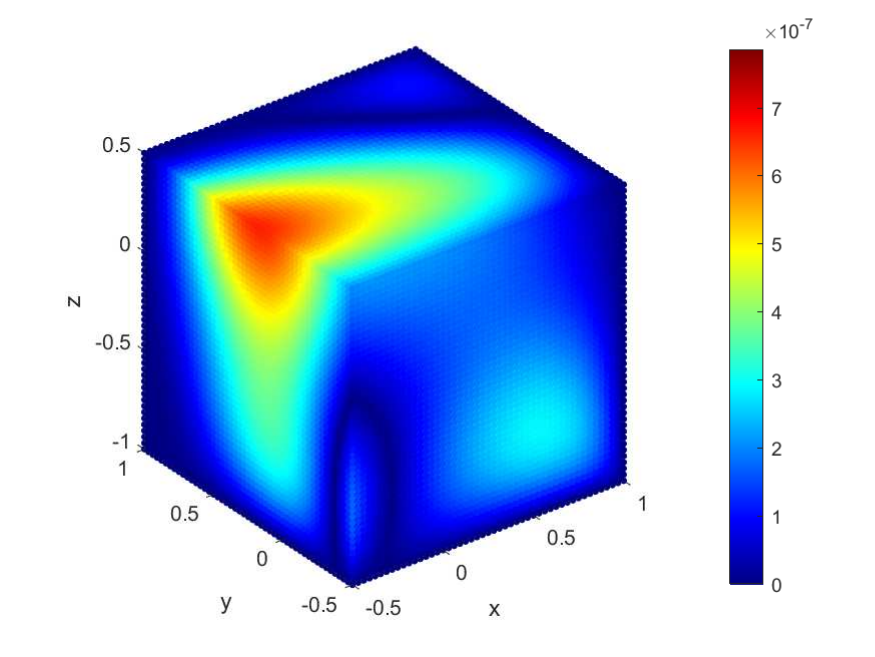}
		\end{subfigure}
		\caption
		{\cref{Example:5}: The exact solution $u$  on $[-1,1]^3$ (first), the exact solution $u$ on the subdomain (spatial domain $\Omega$ is $(-1,1)^3$) $[-1/2,1]\times [-1/2,1] \times [-1,1/2]$ (second), the error $|u_h-u|$ with the numerical solution $u_h$ computed by the FDM in \cref{thm:FDM:3D}  on the subdomain (spatial domain $\Omega$ is $(-1,1)^3$) $[-1/2,1]\times [-1/2,1] \times [-1,1/2]$ (third) with $h=2/2^{6}$.}
		\label{Example:5:fig}
	\end{figure}	
	
	In the following \cref{Example:6}, we test the 3D linear time-dependent equation \eqref{Linear:Parabolic:3D}. According to \cref{remark:examp:2}, we observe that performances of FDMs in  \cref{thm:FDM:2D}  and \cref{thm:FDM:parabo:2D} are nearly identical. To keep numerical results concise, we only use the BDF3 method with \cref{thm:FDM:3D}  to test this example. In  \cref{Example:8}, we compare performances of \cref{thm:FDM:3D} and \cref{thm:FDM:parabo:3D} with the BDF4 method.
	\begin{example}\label{Example:6}
		\normalfont
		The functions  of the 3D linear time-dependent equation \eqref{Linear:Parabolic:3D} are given by
		\begin{align*}
			&u=\cos(2(x+y-z))\exp(t),\qquad \kappa= 2+\sin(2x-y-z+t) ,\qquad \alpha= \exp(x+y+z-t), \\
			& \beta= \exp(x-y+z+2t), \qquad \gamma=\sin(x+y+z-3t),\qquad \lambda=\cos(x+y+z+4t), \\
			& \Omega=(0,1)^3,  \qquad I=[0,1]\ (\text{temporal domain}),
		\end{align*}
		the source term $\phi$, the initial function $u^0$,	and the Dirichlet boundary function $g$  are obtained by plugging above functions into \eqref{Linear:Parabolic:3D}. We use  the proposed FDM in \cref{thm:FDM:3D} with the BDF3 method to solve this example. Note that we use the FDM in \cref{thm:FDM:3D} with the CN method and $\tau=h/2$ to compute $u^1,u^2$ for the BDF3 method. Furthermore,  
		FD operators  \eqref{ux:order:2}--\eqref{ux:order:5},  \eqref{uxx:order:2}--\eqref{uxx:order:4} are used to calculate first-order and second-order partial derivatives of $a^{n+i}$, $b^{n+i}$, $c^{n+i}$, $d^{n+i}$, and $f^{n+i}$ with $i=1/2,3$  in \eqref{CN:eq:3D} for the FDM of \cref{thm:FDM:3D}.	The numerical results are presented in \cref{Example:6:table} and \cref{Example:6:fig}. From results in \cref{Example:6:table}, we observe that higher-order or simpler FD operators of first-order and second-order partial derivatives of $a^{n+i}$, $b^{n+i}$, $c^{n+i}$, $d^{n+i}$, and $f^{n+i}$   that used in \cref{thm:FDM:3D} for \eqref{CN:eq:3D} produce smaller errors and more stable convergence rates when $h=1/2^6$.
	\end{example}
	\begin{table}[htbp]
		\caption{Performance in \cref{Example:6} of the proposed FDM in \cref{thm:FDM:3D} with the BDF3 method.  Note that $\Qw^{(m,n,q)}:=\tfrac{\partial^{m+n+q}\Qw}{\partial x^m \partial y^n  \partial z^q}$ with $m+n+q\le 2$ represent partial derivatives of $\Qw=a^{n+i},b^{n+i},c^{n+i},d^{n+i},f^{n+i}$  with $i=1/2,3$ that used in \eqref{CN:eq:3D} for the FDM of \cref{thm:FDM:3D}. We use FD operators \eqref{ux:order:2}--\eqref{ux:order:5} and  \eqref{uxx:order:2}--\eqref{uxx:order:4} to compute $\Qw^{(m,n,q)}$.}
		\centering
		{\renewcommand{\arraystretch}{1.0}
		\scalebox{1}{
			\setlength{\tabcolsep}{3mm}{
				\begin{tabular}{c|c|c|c|c|c|c|c|c|c}
					\hline
					\multicolumn{2}{c|}{} &
					\multicolumn{2}{c|}{Setting 1} &
					\multicolumn{2}{c|}{Setting 2} &
					\multicolumn{2}{c|}{Setting 3} &
					\multicolumn{2}{c}{Setting 4}  \\
					\hline
					\multicolumn{2}{c|}{} & 
					\multicolumn{2}{c|}{\eqref{ux:order:2} and \eqref{uxx:order:2}} &
					\multicolumn{2}{c|}{\eqref{ux:order:3} and \eqref{uxx:order:2} }& 
					\multicolumn{2}{c|}{\eqref{ux:order:4} and \eqref{uxx:order:43}} &
					\multicolumn{2}{c}{\eqref{ux:order:5} and \eqref{uxx:order:4}} \\
					\hline
					$h$ & $\tau$ &  $\|u_{h}-u\|_\infty$     &order    &   $\|u_{h}-u\|_\infty$   & order  &   $\|u_{h}-u\|_\infty$   & order  &   $\|u_{h}-u\|_\infty$   & order    \\
					\hline
					$1/2^2$  &  $1/2^2$  &  7.5017E-03  &    &  7.7940E-03  &    &  7.8245E-03  &    &    &  \\
					$1/2^3$  &  $1/2^3$  &  4.5961E-04  &  4.03  &  4.9376E-04  &  3.98  &  4.9331E-04  &  3.99  &  4.9302E-04  &  \\
					$1/2^4$  &  $1/2^4$  &  2.4235E-05  &  4.25  &  2.7746E-05  &  4.15  &  2.7480E-05  &  4.17  &  2.7478E-05  &  4.17\\
					$1/2^5$  &  $1/2^5$  &  1.1298E-06  &  4.42  &  1.4141E-06  &  4.29  &  1.3825E-06  &  4.31  &  1.3824E-06  &  4.31\\
					$1/2^6$  &  $1/2^6$  &  1.7630E-07  &  2.68  &  1.6087E-07  &  3.14  &  1.1719E-07  &  3.56  &  1.1723E-07  &  3.56\\
					\hline
		\end{tabular}}}}
		\label{Example:6:table}
	\end{table}
	\begin{figure}[htbp]
		\centering
		\begin{subfigure}[b]{0.23\textwidth}
			\includegraphics[width=4.6cm,height=4.5cm]{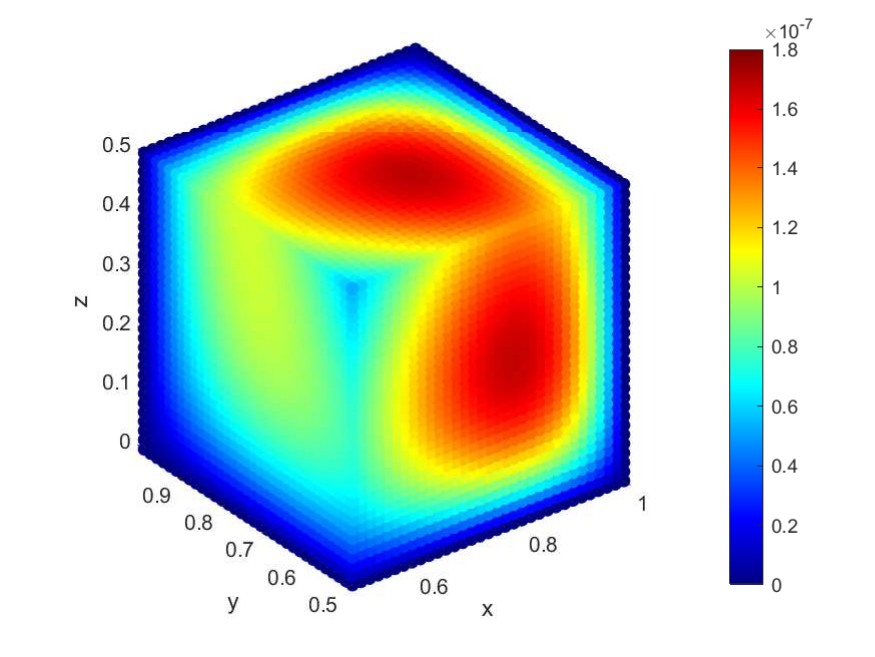}
		\end{subfigure}
		\begin{subfigure}[b]{0.23\textwidth}
			\includegraphics[width=4.6cm,height=4.5cm]{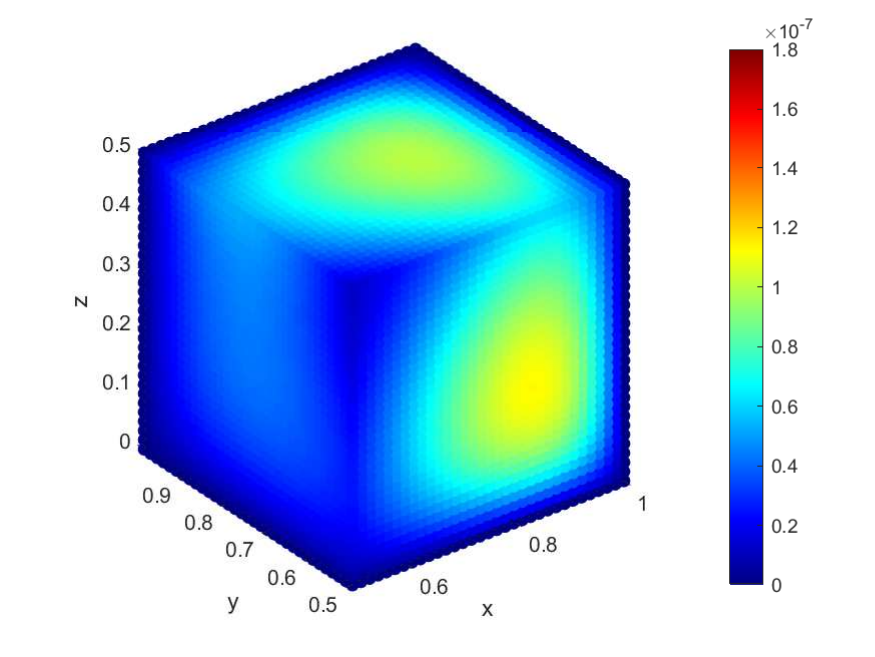}
		\end{subfigure}
		\begin{subfigure}[b]{0.23\textwidth}
			\includegraphics[width=4.6cm,height=4.5cm]{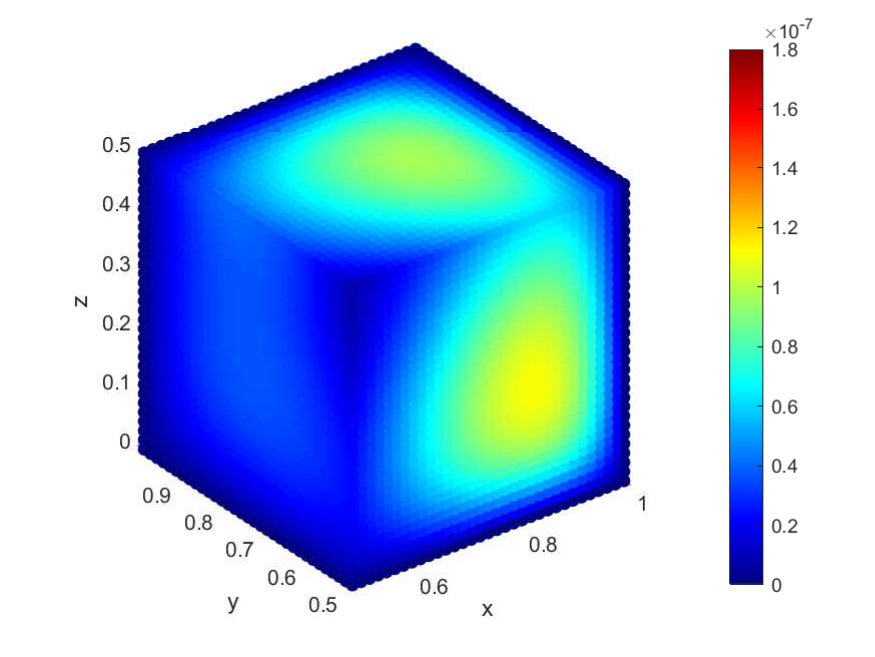}
		\end{subfigure}
		\begin{subfigure}[b]{0.23\textwidth}
			\includegraphics[width=4.6cm,height=4.5cm]{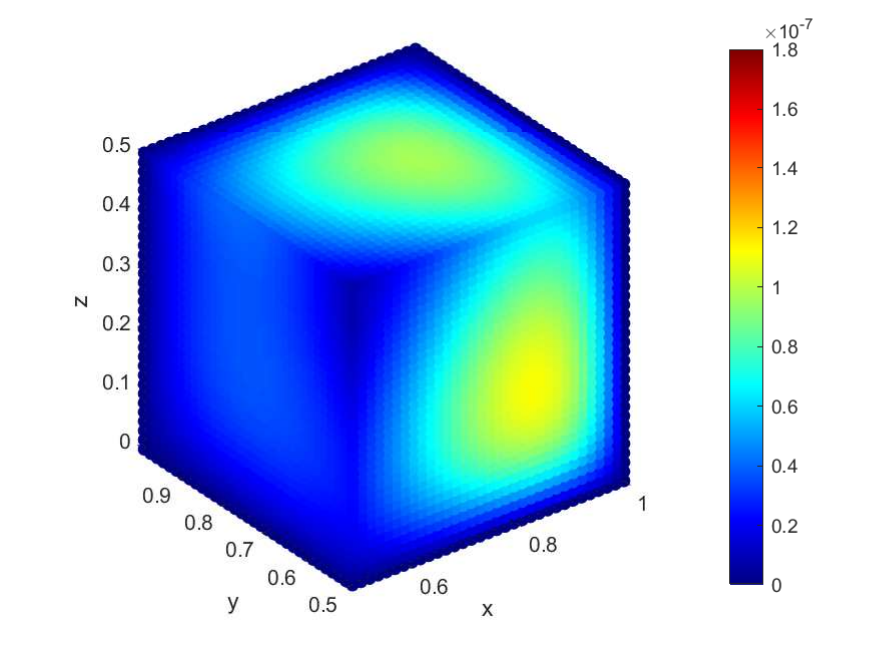}
		\end{subfigure}
		\caption
		{The errors $|u_h-u|$ at $t=1$ of \cref{Example:6} with numerical solutions $u_h$ computed by the FDM in  \cref{thm:FDM:3D} and the BDF3 method:   $|u_h-u|$ with the Setting 1 (first),  $|u_h-u|$ with the Setting 2 (second), $|u_h-u|$ with the Setting 3 (third), and $|u_h-u|$ with the Setting 4 (fourth) on the subdomain (spatial domain $\Omega$ is $(0,1)^3$) $[1/2,1]\times [1/2,1] \times [0,1/2]$ with $\tau=h=1/2^{6}$.  See details of Settings 1-4 in \cref{Example:6:table}.  }
		\label{Example:6:fig}
	\end{figure}	
	
	In the following \cref{Example:7}, we test the 3D nonlinear time-independent equation \eqref{Non:Linear:Elliptic:3D}.
	\begin{example}\label{Example:7}
		\normalfont
		The functions  of the 3D nonlinear time-independent equation \eqref{Non:Linear:Elliptic:3D} are given by
		\begin{align*}
			&u=\cos(x+y-z),\qquad \kappa= \exp(u) ,\qquad \alpha= \sin u, \\
			& \beta= \cos u, \qquad \gamma=u^2,\qquad \lambda=u^3, \qquad \Omega=(-1,1)^3,
		\end{align*}
		the source term $\phi$ and the Dirichlet boundary function $g$  are obtained by plugging above functions into \eqref{Non:Linear:Elliptic:3D}. We use  the proposed FDM in \cref{thm:FDM:3D} to solve this example. Furthermore, FD operators 
		\eqref{ux:order:3}--\eqref{ux:order:5} are used to calculate $(u_\Qk)_x$, $(u_\Qk)_y$, and $(u_\Qk)_z$ for \eqref{ak:bk:dk:fk} and \eqref{cQk:nota}  in \eqref{simplied:eq:non:elli:3D} for the FDM of \cref{thm:FDM:3D}; FD operators  \eqref{ux:order:2}--\eqref{ux:order:4} and \eqref{uxx:order:2}--\eqref{uxx:order:4} are used to calculate first-order and second-order partial derivatives of $a_\Qk$, $b_\Qk$, $c_\Qk$, $d_\Qk$, and $f_\Qk$  in \eqref{simplied:eq:non:elli:3D} for the FDM of \cref{thm:FDM:3D}.	The numerical results are presented in \cref{Example:7:table} and \cref{Example:7:fig}. From results in \cref{Example:7:table}, we also observe that higher-order or simpler FD operators for  $(u_\textup\Qk)_x$,  $(u_\textup\Qk)_y$, $(u_\textup\Qk)_z$, first-order and second-order partial derivatives of $a_\textup\Qk$, $b_\textup\Qk$, $c_\textup\Qk$,  $d_\textup\Qk$, and $f_\textup\Qk$  in \eqref{simplied:eq:non:elli:3D}  produce smaller errors. 
	\end{example}
	\begin{table}[htbp]
		\caption{Performance in \cref{Example:7} of the proposed FDM in \cref{thm:FDM:3D}. Note that $\Qw^{(m,n,q)}:=\tfrac{\partial^{m+n+q}\Qw}{\partial x^m \partial y^n  \partial z^q}$ with $m+n+q\le 2$ represent partial derivatives of $\Qw=a_\Qk,b_\Qk,c_\Qk,d_\Qk,f_\Qk$ that used in  \eqref{simplied:eq:non:elli:3D} for the FDM of \cref{thm:FDM:3D}.}
		\centering
		{\renewcommand{\arraystretch}{1.0}
		\scalebox{1}{
			\setlength{\tabcolsep}{2mm}{
				\begin{tabular}{c|c|c|c|c|c|c|c|c}
					\hline
					\multicolumn{3}{c|}{Setting 1} &
					\multicolumn{3}{c|}{Setting 2} &
					\multicolumn{3}{c}{ Setting 3 } \\
					\hline
					\multicolumn{3}{c|}{\eqref{ux:order:3} for  $(u_\Qk)_x$, $(u_\Qk)_y$, and $(u_\Qk)_z$} &
					\multicolumn{3}{c|}{\eqref{ux:order:4} for  $(u_\Qk)_x$, $(u_\Qk)_y$, and $(u_\Qk)_z$} &
					\multicolumn{3}{c}{ \eqref{ux:order:5} for   $(u_\Qk)_x$, $(u_\Qk)_y$, and $(u_\Qk)_z$} \\
					\hline
					\multicolumn{3}{c|}{\eqref{ux:order:2} and \eqref{uxx:order:2} for $\Qw^{(m,n,q)}$} &
					\multicolumn{3}{c|}{\eqref{ux:order:3} and \eqref{uxx:order:43} for $\Qw^{(m,n,q)}$} &
					\multicolumn{3}{c}{ \eqref{ux:order:4} and \eqref{uxx:order:4} for $\Qw^{(m,n,q)}$ }\\
					\hline
					$h$ &  \hspace{0.7cm} $\|u_{h}-u\|_\infty$ \hspace{0.7cm}    &order   &  $h$ &  \hspace{0.7cm} $\|u_{h}-u\|_\infty$  \hspace{0.7cm}  &order   & $h$ &  \hspace{0.7cm} $\|u_{h}-u\|_\infty$ \hspace{0.7cm}   &order \\
					\hline
					$2/2^2$ &  3.0534E-03 &   &  $2/2^2$ &  4.4444E-03 &   &  $2/2^2$ &   &  \\
					$2/2^3$ &  3.0335E-04 &  3.33 &  $2/2^3$ &  2.5891E-04 &  4.10 &  $2/2^3$ &  2.9907E-04 &  \\
					$2/2^4$ &  2.7869E-05 &  3.44 &  $2/2^4$ &  1.7630E-05 &  3.88 &  $2/2^4$ &  1.9744E-05 &  3.92\\
					$2/2^5$ &  2.8275E-06 &  3.30 &  $2/2^5$ &  1.0873E-06 &  4.02 &  $2/2^5$ &  1.2274E-06 &  4.01\\
					$2/2^6$ &  3.0948E-07 &  3.19 &  $2/2^6$ &  6.8152E-08 &  4.00 &  $2/2^6$ &  7.6902E-08 &  4.00\\			
					\hline
		\end{tabular}}}}
		\label{Example:7:table}
	\end{table}
	\begin{figure}[htbp]
		\centering
		\begin{subfigure}[b]{0.31\textwidth}
			\includegraphics[width=6cm,height=5cm]{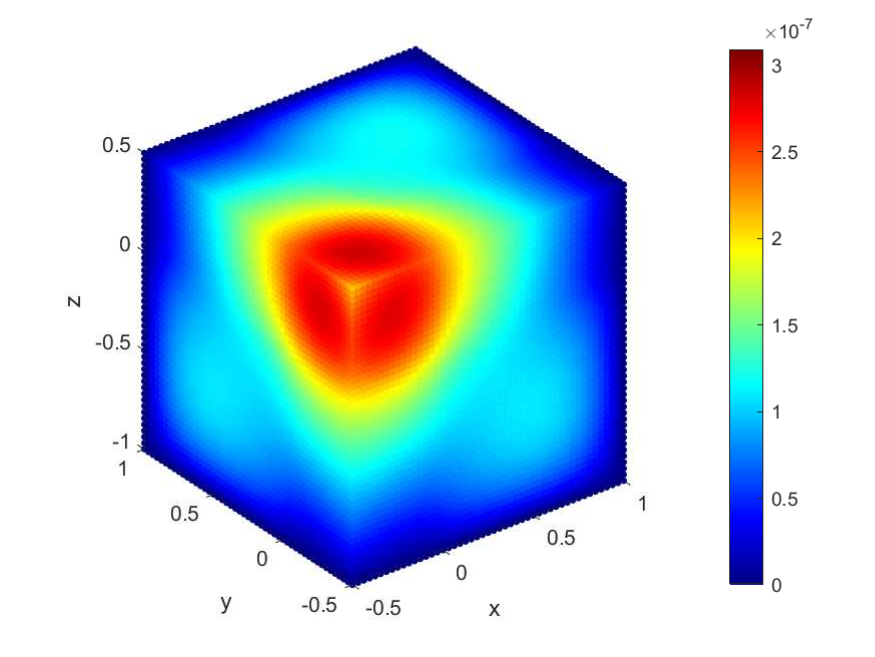}
		\end{subfigure}
		\begin{subfigure}[b]{0.31\textwidth}
			\includegraphics[width=6cm,height=5cm]{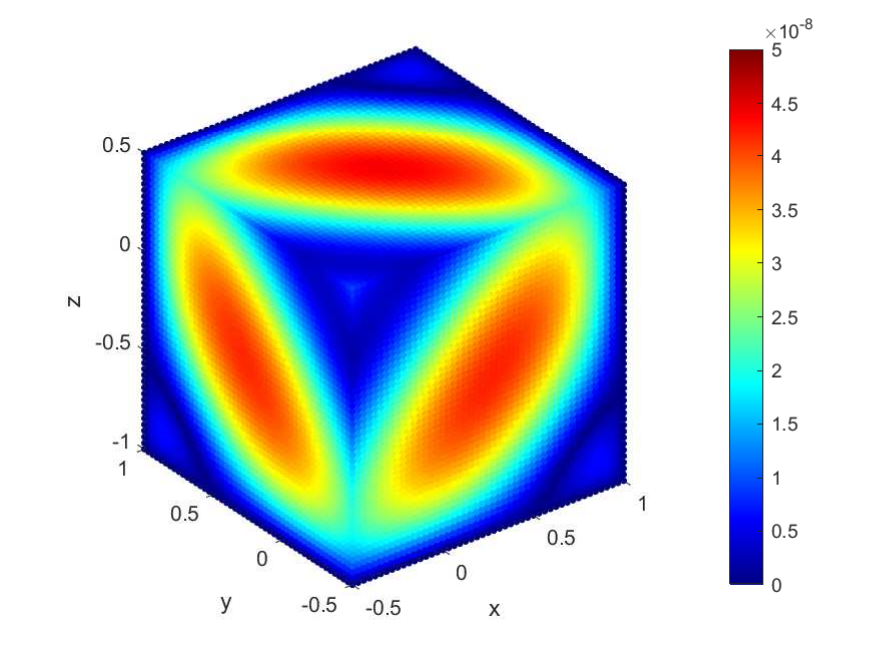}
		\end{subfigure}
		\begin{subfigure}[b]{0.31\textwidth}
			\includegraphics[width=6cm,height=5cm]{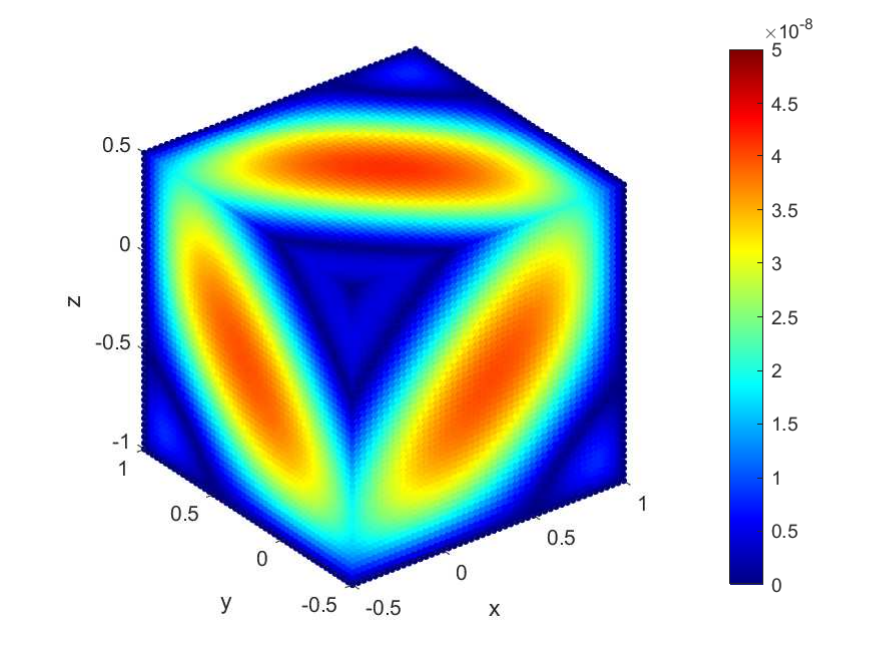}
		\end{subfigure}
		\caption
		{The errors $|u_h-u|$ of \cref{Example:7} with numerical solutions $u_h$ computed by the FDM in \cref{thm:FDM:3D} with Settings 1-3 in \cref{Example:7:table}:   $|u_h-u|$ with the Setting 1 (first),  $|u_h-u|$ with the Setting 2 (second),  $|u_h-u|$ with the Setting 3 (third) on the subdomain (spatial domain $\Omega$ is $(-1,1)^3$) $[-1/2,1]\times [-1/2,1] \times [-1,1/2]$ with $h=2/2^{6}$. See details of Settings 1-3 in \cref{Example:7:table}.  }
		\label{Example:7:fig}
	\end{figure}	
	
	In the following \cref{Example:8}, we test the 3D nonlinear time-dependent equation \eqref{No:Linear:Parabolic:3D}. In \cref{Example:3:table,Example:6:table,Example:7:table}, FD operators \eqref{ux:order:3} and/or \eqref{ux:order:4}, and \eqref{uxx:order:43} yield the smallest errors for FDMs in \cref{thm:FDM:2D} and \cref{thm:FDM:3D}. So we use FD operators \eqref{ux:order:3}--\eqref{ux:order:4} and \eqref{uxx:order:43}  for the FDM in \cref{thm:FDM:3D} in \cref{Example:8}. To demonstrate the accuracy of the FDM in \cref{thm:FDM:3D}, we also test \cref{Example:8} using the FDM in \cref{thm:FDM:parabo:3D} with FD operators \eqref{ux:order:3}--\eqref{ux:order:4}, \eqref{uxx:order:43}, and \eqref{uxxx:order:2}.
	\begin{example}\label{Example:8}
		\normalfont
		The functions  of the 3D nonlinear time-dependent equation \eqref{No:Linear:Parabolic:3D} are given by
		\begin{align*}
			&u=\cos(x+y-z)\exp(-t),\qquad \kappa= \exp(2u) ,\qquad \alpha= \cos u, \\
			& \beta= \sin u, \qquad \gamma=u^2,\qquad \lambda=3+\cos u, \qquad \Omega=(0,1)^3,\qquad I=[0,1]\ (\text{temporal domain}),
		\end{align*}
		the source term $\phi$, the initial function $u^0$, and the Dirichlet boundary function $g$  are obtained by plugging above functions into \eqref{No:Linear:Parabolic:3D}. We use proposed FDMs in \cref{thm:FDM:3D} and \cref{thm:FDM:parabo:3D} with the BDF4 method to solve this example. Note that we utilize FDMs in \cref{thm:FDM:3D} and \cref{thm:FDM:parabo:3D} with the CN method using $\tau=h/2$ to  obtain $u^1,u^2,u^3$ for the BDF4 method. Furthermore, the FD operator
		\eqref{ux:order:4} is used to calculate $(u^{n+i}_\Qk)_x$, $(u^{n+i}_\Qk)_y$, and $(u^{n+i}_\Qk)_z$ with $i=1/2,4$ in \eqref{akni} and \eqref{ckni:3D} in \eqref{simplied:eq:non:para:3D}; FD operators \eqref{ux:order:3}, \eqref{uxx:order:43}, and \eqref{uxxx:order:2} are used to calculate first-order to third-order partial derivatives of $a^{n+i}_\Qk$, $b^{n+i}_\Qk$, $c^{n+i}_\Qk$, $d^{n+i}_\Qk$, and $f^{n+i}_\Qk$ with $i=1/2,4$ in \eqref{simplied:eq:non:para:3D}.	The numerical results are presented in \cref{Example:8:table} and \cref{Example:8:fig}. From results in \cref{Example:8:table}, we observe that although \cref{thm:FDM:3D} and \cref{thm:FDM:parabo:3D} are third-order and fourth-order consistent, respectively, errors from \cref{thm:FDM:3D} are smaller than those from \cref{thm:FDM:parabo:3D} if $h\ge 1/2^5$. The reason may be that the stencil of \cref{thm:FDM:3D} is simple which results in the smaller magnitude in the leading term of the truncation error if $h$ is not small enough, and the complicated expression of \cref{thm:FDM:parabo:3D} has the larger pollution effect for the relatively coarse $h$ and requires the more accurate function evaluation. 
	\end{example}
	\begin{table}[htbp]
		\caption{Performance in \cref{Example:8} of proposed FDMs in  \cref{thm:FDM:3D} and \cref{thm:FDM:parabo:3D} with the BDF4 method.  Note that $\Qw^{(m,n,q)}:=\tfrac{\partial^{m+n+q}\Qw}{\partial x^m \partial y^n  \partial z^q}$ with $m+n+q\le 2 \text{ or } 3$ represent partial derivatives of $\Qw=a^{n+i}_\Qk,b^{n+i}_\Qk,c^{n+i}_\Qk,d^{n+i}_\Qk,f^{n+i}_\Qk$ with $i=1/2,4$ that used in \eqref{simplied:eq:non:para:3D} for \cref{thm:FDM:3D} and \cref{thm:FDM:parabo:3D}.}
		\centering
		{\renewcommand{\arraystretch}{1.0}
		\scalebox{1}{
			\setlength{\tabcolsep}{3mm}{
				\begin{tabular}{c|c|c|c|c|c|c|c}
					\hline
					\multicolumn{4}{c|}{FDM in  \cref{thm:FDM:3D} with BDF4} &
					\multicolumn{4}{c}{FDM in  \cref{thm:FDM:parabo:3D}  with BDF4}  \\
					\hline
					\multicolumn{4}{c|}{\eqref{ux:order:4} for  $(u^{n+i}_\Qk)_x$, $(u^{n+i}_\Qk)_y$, and $(u^{n+i}_\Qk)_z$} &
					\multicolumn{4}{c}{\eqref{ux:order:4}  for  $(u^{n+i}_\Qk)_x$, $(u^{n+i}_\Qk)_y$, and $(u^{n+i}_\Qk)_z$ }\\
					\hline
					\multicolumn{4}{c|}{\eqref{ux:order:3} and \eqref{uxx:order:43} for $\Qw^{(m,n,q)}$} &
					\multicolumn{4}{c}{\eqref{ux:order:3}, \eqref{uxx:order:43}, and \eqref{uxxx:order:2} for $\Qw^{(m,n,q)}$}\\
					\hline
					$h$ & $\tau$ &  \hspace{1cm} $\|u_{h}-u\|_\infty$   \hspace{1cm}  &order   &  $h$  & $\tau$ &  \hspace{1cm} $\|u_{h}-u\|_\infty$  \hspace{1cm} & order    \\
					\hline
					$1/2^2$  &  $1/2^2$  &  1.2255E-04  &    &  $1/2^2$  &  $1/2^2$  &  2.5734E-04  &  \\
					$1/2^3$  &  $1/2^3$  &  5.8982E-06  &  4.38  &  $1/2^3$  &  $1/2^3$  &  1.3109E-05  &  4.30\\
					$1/2^4$  &  $1/2^4$  &  2.5451E-07  &  4.53  &  $1/2^4$  &  $1/2^4$  &  1.0324E-06  &  3.67\\
					$1/2^5$  &  $1/2^5$  &  3.6193E-08  &  2.81  &  $1/2^5$  &  $1/2^5$  &  6.3529E-08  &  4.02\\
					$1/2^6$  &  $1/2^6$  &  4.7213E-09  &  2.94  &  $1/2^6$  &  $1/2^6$  &  4.0274E-09  &  3.98\\		
					\hline
		\end{tabular}}}}
		\label{Example:8:table}
	\end{table}
	\begin{figure}[htbp]
		\centering
		\begin{subfigure}[b]{0.38\textwidth}
			\includegraphics[width=6cm,height=5cm]{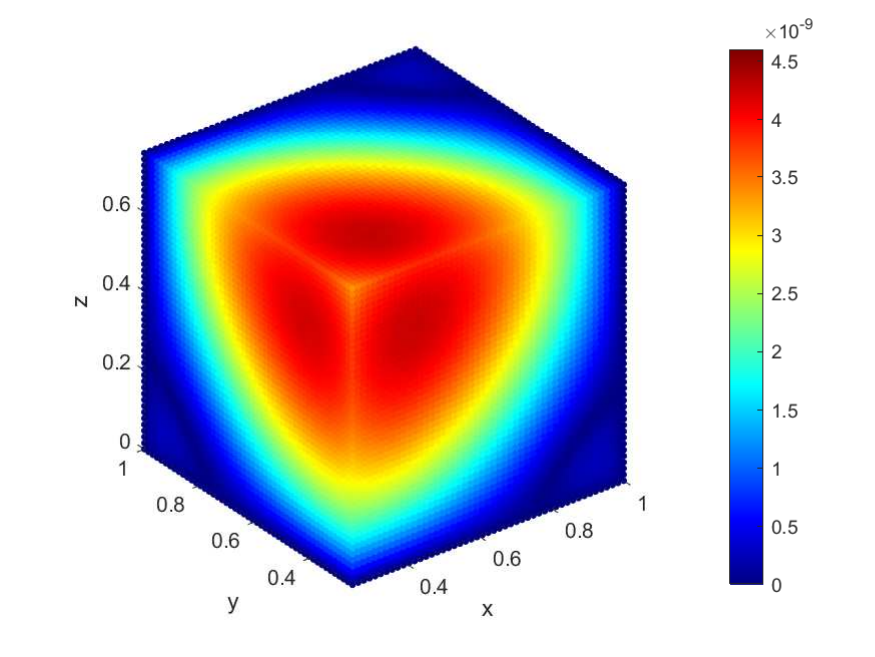}
		\end{subfigure}
		\begin{subfigure}[b]{0.38\textwidth}
			\includegraphics[width=6cm,height=5cm]{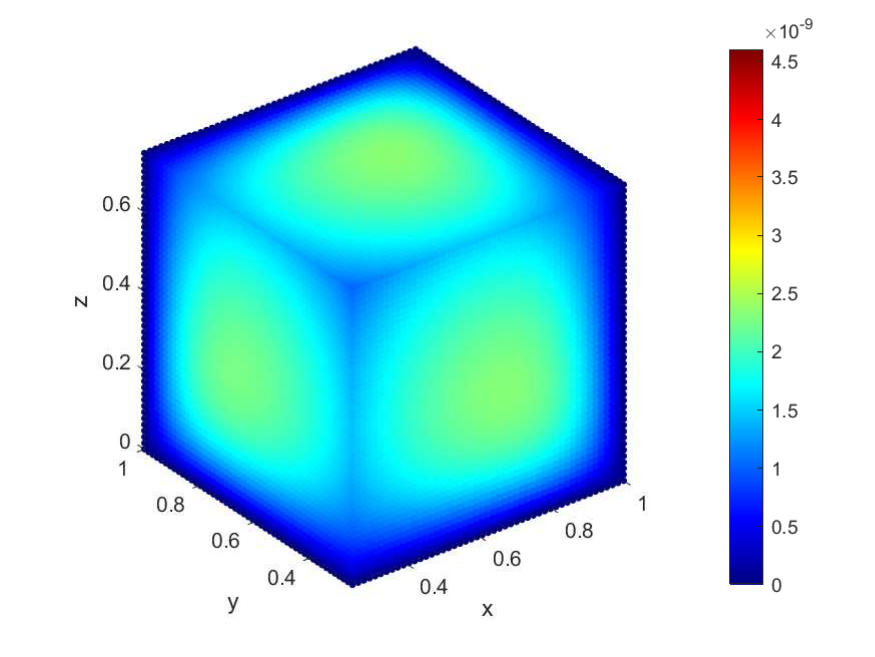}
		\end{subfigure}
		\caption
		{The errors $|u_h-u|$ at $t=1$ of \cref{Example:8} with numerical solutions $u_h$ computed by FDMs in  \cref{thm:FDM:3D} and \cref{thm:FDM:parabo:3D} with the BDF4 method:   $|u_h-u|$ with \cref{thm:FDM:3D} and BDF4 (left),  and $|u_h-u|$ with  \cref{thm:FDM:parabo:3D} and BDF4 (right) on the subdomain (spatial domain $\Omega$ is $(0,1)^3$) $[1/4,1]\times [1/4,1] \times [0,3/4]$ with $\tau=h=1/2^{6}$.  See details of settings in \cref{Example:8:table}.  }
		\label{Example:8:fig}
	\end{figure}	
	\begin{remark} 
		According to results in \cref{Example:1:table}--\cref{Example:8:table}, we can conclude that FDMs in   \cref{thm:FDM:2D} (compact 9-point FDM in 2D) and \cref{thm:FDM:3D} (compact 19-point FDM in 3D) with simple stencils yield convincing numerical solutions for  linear and nonlinear,  time-independent and time-dependent,  convection-diffusion-reaction equations in 2D and 3D.	
		To balance the accuracy and efficiency, 4-point to 5-point FD operators \eqref{ux:order:3}--\eqref{ux:order:4} and \eqref{uxx:order:43} are  optimal FD operators to approximate first-order and second-order partial derivatives used in \cref{thm:FDM:2D,thm:FDM:3D}.
		We plan to use the strategy in \citep[Sections 2.4 and 3.2]{FengTrenchea2026} to minimize truncation errors to reduce pollution effects of  high-order FDMs in future. We plan to combine techniques in \citep{Clain2024,HanSim2025,Mirzadeh2011} with our high-order FDMs to solve the convection-diffusion-reaction equation in an irregular domain.
	\end{remark}

	\section{Contribution}\label{sec:contribu}
	In this paper, we consider the 2D linear time-independent \eqref{Linear:Elliptic:2D} and  time-dependent \eqref{Linear:Parabolic:2D} equations;  2D nonlinear steady \eqref{Non:Linear:Elliptic:2D} and unsteady  \eqref{parabo:nonlinear:2D} equations;  3D linear stationary \eqref{Model：Elliptic:3D} and nonstationary  \eqref{Linear:Parabolic:3D} equations,  3D nonlinear time-independent \eqref{Non:Linear:Elliptic:3D} and time-dependent  \eqref{No:Linear:Parabolic:3D} equations;  where each of $\kappa>0$ (diffusion), $\alpha,\beta,\gamma$ (convection), $\lambda$ (reaction) is the smooth variable function for every linear PDE, and  all $\kappa>0,\alpha,\beta,\gamma,\lambda$ depend on $u$ for all nonlinear equations. Our main contributions in this paper are follows:
	\begin{itemize}
		\item We provide fourth-order compact 9-point (\cref{thm:FDM:2D}, the main result in 2D) and 19-point (\cref{thm:FDM:3D}, the main result in 3D) FDMs with simple stencils to solve linear and nonlinear time-independent equations.
		
		\item We employ CN, BDF3, and BDF4 methods with FDMs in \cref{thm:FDM:2D,thm:FDM:3D} to solve linear and nonlinear unsteady  equations in 2D and 3D. The numerical results present at least third-order convergence rates of errors in the $l_{\infty}$ norm.
		
		\item To demonstrate the accuracy of FDMs in \cref{thm:FDM:2D,thm:FDM:3D} for time-dependent equations, we also propose fourth-order compact 9-point (\cref{thm:FDM:parabo:2D} in 2D) and 19-point (\cref{thm:FDM:parabo:3D} in 3D) FDMs for nonstationary equations. Using these  simple stencils, although \cref{thm:FDM:2D,thm:FDM:3D} can only achieve consistency order three for unsteady equations, one order lower than \cref{thm:FDM:parabo:2D,thm:FDM:parabo:3D}, respectively, errors from  \cref{thm:FDM:2D,thm:FDM:3D} are only slightly larger than those from \cref{thm:FDM:parabo:2D,thm:FDM:parabo:3D} (see col2, col6 and col8 in \cref{Example:4:table} for an example in 2D). For complicated PDEs,  \cref{thm:FDM:2D,thm:FDM:3D} can yield smaller errors than  \cref{thm:FDM:parabo:2D,thm:FDM:parabo:3D}, if $h$ is reasonably fine but not sufficiently small (see \cref{Example:8:table} for an example in 3D).
		
		\item Each FDM in \cref{thm:FDM:2D,thm:FDM:parabo:2D,thm:FDM:3D,thm:FDM:parabo:3D} is the monotone scheme, satisfies discrete maximum principle, and forms an M-matrix for the sufficiently small $h$ if $\kappa>0$ and/or $\lambda\ge 0$ (FDMs in \cref{thm:FDM:parabo:2D,thm:FDM:parabo:3D} only need $\kappa> 0$ without requiring $\lambda\ge 0$).
		
		\item Although derivations of high-order FDMs are still complicated, we provide novel observations of stencils of high-order FDMs to add new restrictions to derive high-order FDMs with simple stencils (\cref{thm:FDM:2D} in 2D and \cref{thm:FDM:3D} in 3D). With the aid of these easy expressions, proofs of truncation errors become straightforward for readers to understand, and the proposed high-order FDMs can be easily implemented by engineers to solve real-word problems.
	\end{itemize}
	
	\section{Declarations}
	\noindent \textbf{Conflict of interest:} The authors declare that they have no conflict of interest.\\
	\noindent \textbf{Data availability:} Data will be made available on reasonable request.
	
	\vspace{0.3cm}
	\noindent\textbf{Acknowledgment}
	Bin Han (bhan@ualberta.ca) and  Jiwoon Sim (jiwoon2@ualberta.ca) (Department of Mathematical and Statistical Sciences, University of Alberta, Edmonton, Alberta, T6G 2N8, Canada) provided initial ideas of properties \eqref{property:1:2D}--\eqref{property:3:2D} for $-\nab\cdot (\kappa \nab u)= \phi$.  Michelle Michelle (mmichell@ualberta.ca, Department of Mathematical and Statistical Sciences, University of Alberta, Edmonton, Alberta, T6G 2N8, Canada) helped to derive \eqref{C:1:Left:3D}--\eqref{F:Right:3D}.

\end{document}